\documentclass[leqno]{article}
\usepackage{amsmath}
\usepackage{amsthm}
\usepackage{amssymb}
\usepackage{graphicx} 
\usepackage{xcolor}
\usepackage{tikz}
\usepackage{url}
\usepackage{hyperref}
\usepackage{caption}
\usetikzlibrary{angles, quotes, intersections, calc}
\usepackage{pgfplots}
\pgfplotsset{compat=1.18}

\numberwithin{equation}{section} 
\newtheorem{theorem}[equation]{Theorem}
\theoremstyle{definition}

\theoremstyle{remark}

\theoremstyle{plain}
\newtheorem{lemma}[equation]{Lemma}
\newtheorem{corollary}[equation]{Corollary}
\newtheorem{proposition}[equation]{Proposition}

\title{Remarks on the relative isoperimetric profile of polygonal domains in $\mathbb{R}^2$}
\author{Jason DeVito, Robert DeYeso III,\\ Ezra Nance, Robert Niedzialomski}
\date{}

\begin{document}

\maketitle

\begin{abstract}
We develop techniques for solving the relative isoperimetric problem on polygonal domains in $\mathbb{R}^2$, with special attention paid to corners.  As an application, we solve the relative isoperimetric problem for a square with a square corner removed.
\end{abstract}

\section{Introduction}

Let $Q\subseteq \mathbb{R}^n$ be a measurable subset with non-zero measure $|Q|$. For any measurable subset $S\subseteq Q$, we let $P(S)$ denote the \textit{relative perimeter} of $S$, that is, the perimeter of $S$ in the interior of $Q$.  For each positive number $t\leq |Q|$, we define $f_{Q}(t) := \inf \{P(S): S\subseteq Q \text{ and } |S| = t\}$.  The function $f_Q$ is called the \textit{relative isoperimetric profile} of $Q$, and any subset $S$ which achieves this infimum is called an \textit{isoperimetric minimizer}.

Several necessary conditions are known about these minimizing subsets. For instance, any minimizer which intersects the boundary of $Q$ must do so orthogonally. However, orthogonality loses all meaning if $\partial Q$ is not smooth at the intersection point. Since these non-smooth points cause complications, often smoothness conditions are assumed on the boundary of $Q$, see for example \cite{CGR,MBNPR,SZ}. 

Our primary goal in this article is to work towards removing the smoothness assumption on $\partial Q$. Specifically, we focus on the case when $Q$ is a polygonal domain, that is, a subset of $\mathbb{R}^2$ bounded by a piecewise linear curve. In Section~ \ref{sec:tools} we characterize all possible ways that an isoperimetric minimizer can interact with corners of a polygonal domain. Our main result is the following.

\begin{theorem}\label{thm:main2}
Let $Q\subseteq \mathbb{R}^2$ be a polygonal region and let $K\in Q$ be a corner. Suppose $S$ is an isoperimetric minimizer. \begin{enumerate}\item If $K$ is convex, then $\partial S$ does not contain $K$.

\item  If $K$ is non-convex, then at most one smooth boundary component of $\partial S$ contains $K$, and this smooth boundary component does not make an acute angle with either of the two edges of $Q$ which contain $K$.
\end{enumerate}
\end{theorem}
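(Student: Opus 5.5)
I would rely on the standard regularity theory for relative isoperimetric minimizers in planar domains. Away from the corners of $Q$, $\partial S\cap \operatorname{int}Q$ consists of finitely many arcs of circles or line segments, all of the same curvature. Each arc meets a smooth part of $\partial Q$ orthogonally. The only possible singular points are corners of $Q$. Near a corner $K$ with interior angle $\theta$, I would work in a small disk $B_r(K)$ in which $Q$ agrees with the sector of angle $\theta$. I would write $\partial S\cap B_r(K)$ as a finite union of arcs emanating from $K$, each with well-defined tangent directions at $K$. Every argument below is local: I modify $S$ inside $B_r(K)$ and then restore the area by a small perturbation elsewhere on a smooth arc. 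By the first variation, that perturbation changes perimeter by $O(\varepsilon^2)$ beyond the linear term $\kappa\,\Delta A$, and the linear term is $O(\varepsilon^2)$ because the area changes are quadratic in $\varepsilon$. So it suffices to produce local modifications that decrease length by an amount of order $\varepsilon$ while changing area by $O(\varepsilon^2)$.

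For part (1), let $K$ be convex, so $\theta<\pi$, and suppose an arc $\gamma$ of $\partial S$ ends at $K$. Let $\alpha\in[0,\theta]$ be the angle that $\gamma$ makes at $K$ with one edge $e$, chosen so that $\alpha\le \theta/2<\pi/2$. Cut $\gamma$ at distance $\varepsilon$ from $K$ and join the cut point to the foot of the perpendicular on $e$ (or on the other edge, if that gives the better angle). Up to $O(\varepsilon^2)$, the old piece has length $\varepsilon$ and the new segment has length $\varepsilon\sin\alpha$. Since $\theta<\pi$, one of the two edges makes an angle strictly less than $\pi/2$ with $\gamma$, so $\sin\alpha<1$ and the saving is of order $\varepsilon$. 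The swept region is a small triangle of area $O(\varepsilon^2)$, so the area correction above finishes this case.

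The degenerate case is when $\gamma$ is tangent to an edge at $K$. Then $\alpha=0$, and the same construction removes almost all of the length. I must also handle several arcs meeting at $K$; there I apply the construction to each arc separately, and the savings simply add.

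For part (2), let $\theta>\pi$. First I show that no arc makes an acute angle with an edge. If an arc met edge $e$ at angle $\alpha<\pi/2$, the same perpendicular-shortcut construction onto $e$ applies verbatim, since it only uses the local half-plane next to $e$ and not convexity. This gives a contradiction.

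Next, suppose two arcs $\gamma_1,\gamma_2$ both end at $K$. By the previous step, each makes an angle of at least $\pi/2$ with both edges. Their tangent directions therefore lie in the sub-sector of angle $\theta-\pi<\pi$, so the angle between them is less than $\pi$. I would then replace the corner formed by $\gamma_1\cup\gamma_2$ near $K$ by the chord joining their points at distance $\varepsilon$. This chord lies in $Q$ because the sub-sector is convex. It is strictly shorter by order $\varepsilon$, by the triangle inequality with angle less than $\pi$, and changes area by $O(\varepsilon^2)$. This contradicts minimality. If the two arcs leave $K$ in the same direction, the chord saving is only of order $o(\varepsilon)$. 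In that case I would instead detach the two arcs from $K$ along their common tangent, which is the standard argument that $\partial S$ has no cusps. This yields the bound of at most one component.

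The main obstacle is justifying the regularity setup at $K$: that $\partial S$ near a corner consists of finitely many arcs with tangents at $K$. I would obtain it from the known interior and smooth-boundary regularity, together with monotonicity/blow-up at $K$, whose tangent cone is a union of rays in the sector. A secondary subtlety is bookkeeping of which side of each arc $S$ lies on. That side must stay consistent when I do the local surgery, so that the modified set is still a subset of $Q$ with the intended area.
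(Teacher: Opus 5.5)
Your strategy is essentially the paper's: a local surgery at $K$ that shortens $\partial S$ by an amount of order $\varepsilon$ while changing the area by only $O(\varepsilon^2)$. You use a perpendicular drop onto an edge when an arc meets it acutely or tangentially, and a shortcut across the sector between two arcs when every angle is non-acute.

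Your variations are legitimate. Restoring the area by a first-variation perturbation on another piece of $\partial S$ replaces the paper's exact matching (the choice of $w$ and $\lambda$ in Proposition~\ref{prop:convexcorner}). It also lets you handle the tangent case with the same perpendicular drop, instead of the separate Proposition~\ref{prop:circleTangentToSide}. Using a chord in part (2), instead of the paper's bisector plus two perpendiculars, is equally fine.

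The one step that does not work as written is the case of several arcs at $K$. If $\gamma$ is not the arc angularly closest to the edge $e$, the perpendicular from its cut point to $e$ crosses every arc lying between $\gamma$ and $e$. Then ``replace the piece of $\gamma$ by this segment'' no longer describes the boundary of a set. If you do this for several arcs at once, the regions being modified overlap, so the savings do not simply add. The same problem affects your acute-angle step in part (2), and your chord when three or more arcs end at $K$.

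You need only one modification, applied to arcs with nothing lying between them and the edge or each other:
\begin{itemize}
\item in the perpendicular-drop steps, use the arc adjacent to $e$. If any arc meets $e$ acutely, this one does too. If two arcs share its tangent direction, take the one bending toward $e$.
\item in the chord step of part (2), join two angularly adjacent arcs.
\end{itemize}
This is exactly the selection the paper makes in proving Theorem~\ref{thm:corner}. The paper also checks that the new segments avoid the remaining arcs, since those separate at rate $\varepsilon$.

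Two smaller corrections.

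First, your claim that the chord saves only $o(\varepsilon)$ when $\gamma_1,\gamma_2$ leave $K$ in the same direction is backwards. In that case the two points at distance $\varepsilon$ are only $O(\varepsilon^2)$ apart, so the saving is about $2\varepsilon$, the most favorable case. The chord degenerates only when the angle between the arcs is $\pi$. That cannot happen, because both directions lie in a closed sector of opening $\theta-\pi<\pi$. So no separate cusp argument is needed.

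Second, regularity at $K$ is not the obstacle you describe. Each component of $\partial S\cap\operatorname{int}Q$ is a segment or a circular arc, and there are only finitely many (Proposition~\ref{prop:finiteboundary}). So every arc ending at $K$ has a one-sided tangent there, and a small disk about $K$ meets only those arcs. No blow-up or monotonicity argument is required.
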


The rest of this article is dedicated to the study of a specific family of polygonal domains. We consider the 1-parameter family of the unit square with a square corner removed, that is, $Q_a := [0,1]^2 \setminus [0,a)^2$. These regions were chosen because they offer a relatively simple example of a polygonal domain with a non-convex corner. Furthermore, when $a=0$ we obtain the full unit square, which has already been analyzed in \cite{BB}. This article generalizes their result.
 
Let $f_a$ be shorthand for $f_{Q_a}$. In \cite{BB}, descriptions of isoperimetric minimizing regions were provided as well as an explicit formula for $f_0$. We do the same for $f_a$ for all $a \in [0,1)$. Our main result is the following.

\begin{theorem}\label{thm:main}For each $a\in [0,1)$ and $t\leq|Q_a|$, isoperimetric minimizers are given by one of the regions $S_1,S_2,S_3$ or $S_4$ in Figure \ref{fig:main}.  An explicit formula for $f_a$ is given by equation \eqref{eq:profile}.
\end{theorem}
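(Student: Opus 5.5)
We will prove Theorem~\ref{thm:main} by combining the standard regularity theory for relative isoperimetric minimizers in planar domains with the corner analysis of Theorem~\ref{thm:main2}. We first reduce the possible shapes of a minimizer $S\subseteq Q_a$ to a finite list of candidates, and then compare their perimeters directly.

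\textbf{Step 1: regularity.} Since $Q_a$ is a planar polygonal domain, we use the known facts about minimizers. Each boundary component of $\partial S$ inside the interior of $Q_a$ is a circular arc or a line segment. All such components have the same constant curvature, because this curvature is the Lagrange multiplier. Each component meets a smooth edge of $\partial Q_a$ orthogonally. A standard connectedness argument (moving components together by symmetry or translation strictly lowers perimeter or yields a contradiction) shows that $S$ and its complement $Q_a\setminus S$ may be taken connected. Hence $\partial S\cap\operatorname{int}Q_a$ consists of a single arc or segment. Here we use that in dimension two two disjoint arcs cost more than one.

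\textbf{Step 2: classification via corners.} By part~1 of Theorem~\ref{thm:main2}, the arc avoids the five convex corners $(1,0),(1,1),(0,1),(a,0),(0,a)$. By part~2, if the arc touches the non-convex corner $(a,a)$, it does so with non-acute angles to both edges. The arc's endpoints therefore lie in the relative interiors of edges and meet them orthogonally, or one endpoint sits at $(a,a)$ within the allowed angular range. Enumerating which pairs of edges the endpoints can lie on, and imposing orthogonality, leaves the following candidates:
\begin{itemize}
\item[(i)] a quarter circle centered at a convex $90^\circ$ corner, namely one of $(1,0)$, $(1,1)$, $(0,1)$;
\item[(ii)] a horizontal or vertical segment cutting across the square;
\item[(iii)] a circular arc centered at $(a,a)$ that cuts off the region near the notch;
\item[(iv)] the arc types through $(a,a)$ allowed by Theorem~\ref{thm:main2}, which in the extremal case degenerate to segments from $(a,a)$ to an opposite edge.
\end{itemize}
Arcs centered elsewhere cannot meet two non-parallel edges orthogonally. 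Using the symmetry of $Q_a$ about the diagonal $y=x$, we identify equivalent candidates. The survivors become $S_1,\dots,S_4$ of Figure~\ref{fig:main}.

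\textbf{Step 3: comparison.} For each candidate we write its perimeter as a function of $t$ and $a$: quarter circles give $\sqrt{\pi t}$, segments give constants, the notch arc gives a $\tfrac{3\pi}{2}$-sector formula, and so on. The profile $f_a(t)$ is the pointwise minimum of these functions, which yields \eqref{eq:profile}. By complementation, $f_a(t)=f_a(|Q_a|-t)$, so only $t\le |Q_a|/2$ needs checking.

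\textbf{Expected obstacle.} We expect the main difficulty to lie in Step 2 near the notch. We must rule out, or pin down, arcs ending at $(a,a)$ at a non-orthogonal but non-acute angle. We also have to decide the crossover values of $t$ where one candidate overtakes another, since these depend on $a$ and produce the piecewise formula. We will first try a first-variation argument: pushing the endpoint off $(a,a)$ along an edge where the angle is strictly obtuse should decrease perimeter at fixed area. This would show that such arcs occur only as segments or in a limited range of $(a,t)$.
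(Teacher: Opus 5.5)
You have genuine gaps, and the most serious is at the step you flag as the expected obstacle. At the reflex corner $(a,a)$ the arc splits the $3\pi/2$ angle into two angles $\phi_1,\phi_2\in[\pi/2,\pi]$. Sliding the endpoint a distance $\varepsilon$ along an edge that makes angle $\phi$ with the arc changes the length by $-\varepsilon\cos\phi+o(\varepsilon)$, even after restoring the area, and $-\cos\phi\ge 0$ here. So your first-variation argument has the wrong sign; this is exactly why Theorem~\ref{thm:main2}(2) excludes only acute angles. Worse, the arcs you hope to eliminate include the $S_4$ regions. These are the actual minimizers for $a>\alpha$ and a range of $t$, and they are the paper's sharpness example. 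They must be classified and computed, not ruled out. The paper sorts them by where the other endpoint lands. On a wall of length $a$ the arc is a semicircle, with $P=\sqrt{2\pi t}$. On a wall of length $1$ it bounds $S_4(\theta)$, with $P=(1-a)\theta/\sin\theta$ and an explicit area formula. On a wall of length $1-a$, one reflects across the midline of that strip (e.g.\ $x=\tfrac{1+a}{2}$). This gives a region of the same area and perimeter that meets the smooth wall $x=1$, so orthogonality forces a quarter circle. Your enumeration is also wrong as stated: circles centered at $(a,0)$ and $(0,a)$ (more $S_1$ regions) and at $(0,0)$ also meet two walls orthogonally. The last is a quarter circle enclosing the notch, with $P=\sqrt{\pi(t+a^2)}$. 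It needs its own comparison: with an $S_1$ region of equal area when one fits, and otherwise with $\max f_a<\sqrt2(1-a)$. Semicircles and full circles are missing too. Your three-quarter circle about $(a,a)$, with $P=\sqrt{3\pi t}$, is a genuine but harmless case, and one the case list in Proposition~\ref{prop:ruleoutlots} actually skips.

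Step 1 is not an argument. ``Two disjoint arcs cost more than one'' is false in general simply connected planar domains. In a disc with two long thin arms, for the area of the arms, two short cross-cuts beat every single curve. Translating components together is also not generally available in $Q_a$. The paper reaches a single component only by quantitative comparison with the explicit upper bound $f_a$:
\begin{itemize}
\item Two segments cost at least $2(1-a)>\max f_a$, using $P(S_4)<\sqrt2(1-a)$.
\item For arcs of a common radius, it first arranges that $S$ lies inside every arc. It then bounds each arc by $\sqrt{\pi t_i}$, with Lemma~\ref{lem:secondcase} covering arcs from the corner to a wall of length $1-a$. Strict concavity then gives $\sum\sqrt{\pi t_i}>\sqrt{\pi t}\ge f_a(t)$ (Lemma~\ref{lem:ffacts}).
\item A separate estimate, $(1+\tfrac{\pi}{2})(1-a)>\sqrt2(1-a)$, handles an arc bounding an $S_4$ region.
\end{itemize}
Finally, Step 3 just asserts that the pointwise minimum is \eqref{eq:profile}, yet that comparison is most of the theorem. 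It needs three things you do not carry out:
\begin{itemize}
\item the monotonicity and convexity of $P(S_4)$ in $t$, via $\frac{d}{dt}P(S_4)=\frac{\sin\theta}{1-a}$;
\item the uniqueness of the crossovers defining $\alpha,\beta,\gamma,\sigma,\tau$;
\item the restricted $t$-ranges on which $S_1$, $S_3$, $S_4$ exist.
\end{itemize}
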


It is interesting that $f_a$ goes through phase changes as $a$ increases. For small $a$, only $S_1$ and $S_2$ regions arise, that is, $f_a$ agrees with the formula found in \cite{BB}. However, once $a$ becomes larger, we must also consider $S_4$ regions, which can intersect the non-convex corner at unusual angles.  For even larger values of $a$, $S_3$ regions become relevant.

It follows from our proof that every isoperimetric minimizer must be a region of the form $S_1,S_2,S_3$, or $S_4$.  In addition, for each $a$ and all but finitely many $t$, it follows from our proof that a minimizing region $S$ is unique up to the obvious symmetries, e.g, a quarter circle can be placed in any of the five convex corners. 

As an important corollary, this example shows that the boundary of a minimizer can intersect a non-convex corner, so that Theorem~\ref{thm:main2}(2) is sharp.

\begin{center}
    \begin{tikzpicture}[scale=3.8]
        \draw[ultra thick] (0.3,0) -- (1,0) -- (1,1) -- (0,1) -- (0,0.3) -- (0.3,0.3) -- cycle;
        \draw[thick, dashed] (0,.9) -- (1,.9);
        \draw[thick, dashed] (.3,0.1) -- (1,0.1);
        \draw[thick, dashed] (1.0,0.8) arc (90:161:0.74);
        \draw (0,0.45) -- (.05,0.45) -- (0.05, .5);
        \draw (.15,.3) -- (.15,.35) -- (.2,.35);
        \draw[thick, dashed] (0,.5) arc (90:0:.2);
        \draw (1,0.85) -- (0.95,0.85) -- (0.95,0.8);
        \draw (0.60,0.65) node[anchor=north west] {$S_4$};
        \draw (.1,.45) node[anchor = south west] {$S_1$};
        \draw (.2, .9) node[anchor = north] {$S_2$};
        \draw (.7, .13) node [anchor = south] {$S_3$};
    \end{tikzpicture}
    \captionof{figure}{Region $S_1$ is a quarter circle hitting both boundary components of $Q_a$ perpendicularly.  Regions $S_2$ and $S_3$ are bounded by line segments parallel to the sides of $Q_a$, and the boundary of Region $S_4$ is a portion of a circle connecting the non-convex corner to a side of length 1.}\label{fig:main}
\end{center}

\textbf{Acknowledgements}  The first author was supported by the NSF through DMS-2405266.  He is grateful for the support.

\section{Background}\label{sec:background}

Suppose $Q \subseteq \mathbb{R}^2$ is a polygonal region, i.e., a domain whose boundary is piecewise linear. Let $S\subseteq Q$ be a measurable region with area $|S|$. We denote by $P(S)$ the relative perimeter of $S$ in $Q$.  That is, $P(S)$ is defined by
\begin{equation*}
P(S)=\sup\left\{\int_S\textrm{div} (X)\,dx\colon X\in C^\infty_c(\text{int}(Q),\mathbb{R}^2),\,\|X\|\leq 1\right\},    
\end{equation*}
where $\text{int}(Q)$ is the interior of $Q$ and $\textrm{div} (X)$ is the divergence of the vector field~$X$.

We are interested in determining the \textit{relative isoperimetric minimizers} (or, more briefly, a \textit{minimizer}).  Recall that a minimizer is a measurable subset $S\subseteq Q$ for which  
\[
    P(S)\leq P(S') \text{ for all } S'\subseteq Q \text{ with } |S| = |S'|.  
\]
If $S$ is a minimizer, then its complement $Q \setminus S$ is also a minimizer bounding area $|Q| - |S|$.  In particular, we can always assume that $|S| \leq \frac{1}{2}|Q|$.

We have the following structure result for isoperimetric minimizers adapted to our situation.  The first bullet point  was proven by Gonzalez, Massari, and Tamanini \cite[Theorem 2, pg. 29]{GMT}, while the second two were proven by Grüter \cite[Theorem (iii) pg. 264]{Gr}.

\begin{theorem}
    \label{thm:structure}  
    Let $S\subseteq Q$ be an isoperimetric minimizer.  Then 
    \begin{itemize}
        \item $\partial S\cap \text{int}(Q)$ is an embedded analytic submanifold of $\text{int}(Q)$,
        \item  there is a number $c$ for which $\partial S\cap \text{int}(Q)$ is a disjoint union of curves, all of which have constant mean curvature $c$,
        \item  $\partial S$ intersects the non-corner points of $Q$ orthogonally.
    \end{itemize}
\end{theorem}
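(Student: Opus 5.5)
This statement is a structure theorem quoted from the literature, so the plan is to reduce each bullet to the cited regularity results, not to reprove them. We work in the setting of sets of finite perimeter. Since $Q$ is polygonal and $S$ minimizes $P$ under a volume constraint, $S$ is an almost-minimizer of perimeter in $\text{int}(Q)$: for any ball $B_r(x)\subset \text{int}(Q)$ and any competitor $S'$ with $S'\triangle S\subset\subset B_r(x)$, we have $P(S)\le P(S')+Cr^2$. To see this, compensate the volume change with a small deformation supported away from $B_r(x)$, whose perimeter cost is linear in the volume change and hence $O(r^2)$.

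\textbf{First bullet.} We apply the regularity theory of Gonzalez, Massari and Tamanini \cite{GMT}. In dimension $2$ the singular set of an almost-minimizing boundary is empty, since the singular set has dimension at most $n-8$. Hence $\partial S\cap \text{int}(Q)$ is a $C^{1,\alpha}$ embedded curve. The first variation with the volume constraint, obtained via a Lagrange multiplier, shows that it weakly has constant curvature. Elliptic regularity then bootstraps this to $C^\infty$ and, since the equation is analytic, to analytic.

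\textbf{Second bullet.} Consider two normal variations supported on two different components or arcs, and combine them so that the total volume is preserved. The first variation of perimeter must vanish, and this forces the curvature, i.e.\ the Lagrange multiplier $c$, to be the same on every component. Each component is then a connected analytic curve of constant curvature $c$, so it is an arc of a circle (or a segment if $c=0$). Embeddedness from the first bullet gives that the union is disjoint.

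\textbf{Third bullet.} At a point $p$ of $\partial Q$ that is not a corner, $\partial Q$ is locally a straight segment. We reflect $S$ across that line to obtain a set that is locally almost-minimizing in a full neighborhood of $p$. Grüter's boundary regularity \cite{Gr} then gives that $\partial S$ is $C^{1,\alpha}$ up to the boundary near $p$. The free-boundary first variation, using tangential vector fields along $\partial Q$ that are allowed there, produces the boundary term $\langle \nu_{\partial S},\tau_{\partial Q}\rangle=0$ at $p$, i.e.\ orthogonal contact.

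\textbf{Main obstacle.} The hardest step is the boundary regularity in the third bullet, namely justifying that $\partial S$ actually reaches $\partial Q$ in a $C^1$ fashion so that the angle condition makes sense. This is exactly what we delegate to \cite{Gr}, using the local flatness of $\partial Q$ away from corners.
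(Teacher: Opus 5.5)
Your proposal is correct and follows the paper's route: the paper proves this theorem purely by citation (\cite{GMT} for the interior regularity, \cite{Gr} for the constant curvature and the orthogonality), and your Lagrange-multiplier and flat-edge reflection sketches are the standard arguments behind those citations. One slip: the free-boundary condition should read $\langle \nu_{\partial S},\nu_{\partial Q}\rangle=0$ (equivalently, the conormal of $\partial S$ at $p$ is orthogonal to $\tau_{\partial Q}$), because $\langle \nu_{\partial S},\tau_{\partial Q}\rangle=0$ as you wrote it describes tangential rather than orthogonal contact.
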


It is a priori possible that distinct connected components of $\partial S\cap \text{int}(Q)$ belong to the same connected component of $\partial S$, see Figure \ref{fig:TwoArcsFromCorner}. We define the number of \textit{smooth boundary curves of $S$} as the number of components of $\partial S\cap \text{int}(Q)$.  If no such curves touch an edge or corner of $Q$, this is simply the number of components of $\partial S$ as a topological manifold. In general, $S$ could have multiple smooth boundary components. 

\begin{center}
    \begin{tikzpicture}[scale=0.8]
        \draw[ultra thick] (4,0) -- (0,0) -- (-1,4);
        \draw[thick] (0,0) arc (-45:0:5);
        \draw[thick] (0,0) arc (135:90:5);
        \node at (2.5,1.75) {$\partial S$};
        \node at (1.8,2.75) {$\partial S$};
    \begin{scope}[shift={(5,0)}]
        \draw[ultra thick] (0,2.5) -- (4,2.5) -- (5,0);
        \draw[thick] (4.015,2.515) arc (45:90:5);
        \draw[thick] (4.015,2.515) arc (45:15:5);
        \node at (3.5, 3.5) {$\partial S$};
        \node at (5, 2) {$\partial S$};
    \end{scope}
    \end{tikzpicture}
    \captionof{figure}{Two examples showing that the boundary of $S$ is a connected topological manifold while $\partial S \cap \text{int}(Q)$ has two smooth components.}\label{fig:TwoArcsFromCorner}
\end{center}

We conclude this section with a proof that for a polygonal region $Q$ and for a minimizer $S$, the boundary $\partial S$ can have at most finitely many smooth connected components.

\begin{proposition}
\label{prop:finiteboundary}
Suppose $Q\subseteq\mathbb{R}^2$ is a polygonal region and $S \subseteq Q$ is relative isoperimetric minimizer.  Then $\partial S$ has finitely many smooth boundary components.
\end{proposition}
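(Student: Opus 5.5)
By Theorem~\ref{thm:structure}, every component of $\partial S\cap \text{int}(Q)$ is an embedded analytic curve of constant curvature $c$. Hence each one is an arc of a circle of radius $1/|c|$, or a line segment if $c=0$. Since $Q$ is bounded, no component can be a full line, so each component is either a closed circle contained in $\text{int}(Q)$, or an open arc (or segment) whose two endpoints lie on $\partial Q$. The plan is to bound the number of each type separately, using perimeter finiteness together with a comparison argument that rules out arbitrarily short pieces.

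\emph{Step 1: a uniform lower bound on length.} First assume $c\neq 0$ and set $r=1/|c|$. A closed component is a circle of radius $r$ and has length $2\pi r$. Since $P(S)<\infty$ (compare with any disk of area $|S|$), there are at most $P(S)/(2\pi r)$ closed components.

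For components with endpoints on $\partial Q$, use the fact that $Q$ has finitely many edges and corners. An arc of radius $r$ meeting the interior of an edge must meet it orthogonally. Consider an arc that is short compared to $r$ and to the edge lengths. Its endpoints lie either on a single edge or on two adjacent edges near a common corner.
\begin{itemize}
\item Both endpoints on the same edge, each at an interior point: a circle meeting a line orthogonally at two points is impossible unless the arc is a full semicircle centered on the line. Such an arc has length $\pi r$, which is not short.
\item Endpoints at corners: there are finitely many corners. A short arc near a corner $K$ cuts off a small region near $K$ with area $A$ and perimeter $\sim\sqrt{A}$. I will argue that only finitely many such arcs occur by comparing with the local configuration; see Step 2.
\end{itemize}
The case $c=0$ is handled the same way. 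Segments orthogonal to an edge at both ends must join two parallel edges, and their lengths are bounded below by the minimum distance between pairs of non-adjacent edges. Segments ending at corners are handled as in Step 2.

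\emph{Step 2: the main obstacle.} The difficulty is excluding infinitely many tiny arcs accumulating at a corner or along an edge. Near a corner of angle $\theta$, the components of $\partial S\cap\text{int}(Q)$ are disjoint arcs of the fixed radius $r$. Each such arc meets the sides at either orthogonal angles or at the corner itself. Two disjoint arcs of radius $r$, both orthogonal to the same edge line, have centers on that line. Infinitely many tiny arcs would therefore need to have endpoints accumulating at a point $p\in\partial Q$, while each arc has length bounded below unless an endpoint is the corner.

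Disjointness is what forces the arcs through a corner to be finite. An arc of radius $r$ starting at $K$ and ending orthogonally on an adjacent edge is determined by its center, which lies on the perpendicular through its endpoint and at distance $r$ from $K$. This gives at most two possibilities per adjacent edge, so there are finitely many arcs per corner.

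Arcs with both endpoints on edges have length at least a fixed constant, namely $\min(\pi r,\text{edge data})$, so perimeter finiteness bounds their number.

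Summing the three contributions (closed circles, arcs through corners, and arcs with both endpoints on edges), $\partial S\cap\text{int}(Q)$ has finitely many components. The step I expect to need the most care is the case analysis near a corner: verifying that every short constant-curvature arc meeting the edges orthogonally must pass through the corner, and counting such arcs using the fixed radius $r$.
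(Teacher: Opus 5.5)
Your approach works, but it is organized differently from the paper's.

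\textbf{How the two routes differ.} The paper argues almost entirely by rigidity. Once one knows which walls or corners the two ends of a component lie on, orthogonality pins down the center: it is the intersection of two wall lines, a point on a wall line at distance $r$ from a corner, or a point at distance $r$ from two corners. So each configuration carries only finitely many components. The bound $P(S)<\infty$ is used only for the one non-rigid family, segments joining two parallel walls.

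You instead use a uniform lower bound on component length together with $P(S)<\infty$, and reserve rigidity for corners. Your route automatically handles the non-rigid families that the paper's count passes over: closed circles in $\text{int}(Q)$, and semicircles centered on a single edge (or on two collinear edges). The paper's route, where it applies, bounds the number of components in each configuration directly, independent of $P(S)$.

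\textbf{Your corner step needs repair.} Repairing it removes what you call the main obstacle.
\begin{enumerate}
\item A circular arc meeting an edge orthogonally has its center on the line \emph{containing that edge}, as you note in the preceding paragraph. It is not on a perpendicular through the endpoint. Read literally, your line depends on the unknown endpoint, and ``at most two possibilities'' does not follow.
\item With the correct line, which also passes through $K$, the point $K$ and the other endpoint are antipodal. So an arc from $K$ ending orthogonally on an edge through $K$ is a semicircle of length $\pi r$, not a short arc.
\item Your constant $\min(\pi r,\text{edge data})$ omits arcs joining interior points of the two edges at a corner $K$. Orthogonality forces their center to be $K$, so their length is $r\phi_K$, where $\phi_K$ is the interior angle.
\item Your corner count does not mention components from $K$ to a non-adjacent edge, to another corner, or back to $K$ (a circle through a reflex corner). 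These have length bounded below by a positive distance or by $2\pi r$.
\item When $c=0$, no segment from $K$ can end orthogonally on an edge through $K$.
\end{enumerate}
So every component has length at least some $\delta=\delta(Q,r)>0$, and there are at most $P(S)/\delta$ of them. There are no short arcs near corners, and no separate corner count is needed.

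One small fix elsewhere: a disk of area $|S|$ need not fit in $Q$. To see $P(S)<\infty$, compare instead with $Q\cap B$ for a disk $B$ with $|Q\cap B|=|S|$.
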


\begin{proof}

Suppose for a contradiction that $S$ is a minimizer for which $\partial S$ has infinitely many smooth boundary components.  From Theorem \ref{thm:structure}, the boundary components are either all line segments or all arcs of circles of some fixed radius~$r$.

Suppose initially that the boundary $\partial S$ consists of line segments.  We first claim that there are only finitely many smooth boundary components of $\partial S$ which intersect $Q$ in a corner.  This follows because such a line segment must either intersect one of the finitely many remaining corners or intersect $\partial Q$ perpendicularly, which determines the slope of the segment up to finitely many possibilities.  Since there are infinitely many smooth boundary components, there must be infinitely many which intersect two walls of $Q$ perpendicularly.  But there is a positive minimum such distance between parallel walls of $Q$, while there are infinitely smooth boundary components.  Thus, we find $P(S) = \infty$, a contradiction.

We next consider the case where there are infinitely many smooth boundary components of $\partial S$ all given by arcs of a circle of radius $r$.  For any two parallel walls of $Q$ there are no such arcs intersecting them orthogonally.  For any two non-parallel walls, in order for a component of $\partial S$ to intersect them both orthogonally, the center of this arc must lie on the lines formed by extending both walls.  Since the radius and center are determined, there is at most one such arc lying in $Q$.  Thus, by hypothesis, there must be infinitely many smooth components intersecting one corner $K$ of $Q$.  For each arc containing $K$, it either ends on a wall of $Q$ or another corner of $Q$.  But for each wall, there are at most two such arcs, because the center of the arc must lie in the line made by extending the wall and the distance to $K$ must be $r$.  And for each corner distinct from $K$ there are at most two arcs containing $K$ and this corner, because the center must simultaneously lie on a circle of radius $r$ about each corner points.  Thus, there are only finitely many arcs of circles of radius $r$ satisfying the condition that they meet walls of $Q$ orthogonally, contradicting the fact that there are infinitely many smooth components.

\bigskip

\end{proof}

\section{An analysis of corners}\label{sec:tools}  In this section, we carry out a detailed analysis of how the boundary curves of an isoperimetric minimizer can intersect corners of a polygonal region $Q\subseteq \mathbb{R}^2$.  In particular, we show that $\partial S$ cannot intersect a convex corner and that at most one smooth component of $\partial S$ can intersect each non-convex corner.  

\begin{theorem}\label{thm:corner}
Let $Q\subseteq \mathbb{R}^2$ be a polygonal region and let $K\in Q$ be a corner. Suppose $S$ is an isoperimetric minimizer. \begin{enumerate}\item If $K$ is convex, then $\partial S$ does not contain $K$.

\item  If $K$ is non-convex, then at most one smooth boundary component of $\partial S$ contains $K$, and this smooth boundary component does not make an acute angle with either of the two edges of $Q$ which contain $K$.
\end{enumerate}
\end{theorem}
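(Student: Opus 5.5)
We argue by local comparison near $K$. By Proposition~\ref{prop:finiteboundary} and Theorem~\ref{thm:structure}, in a small disk $B_\varepsilon(K)$ the set $\partial S$ consists of finitely many arcs of constant curvature $c$ (segments or circular arcs of a fixed radius $r$) emanating from $K$. For $\varepsilon$ small each arc is $C^1$ up to $K$ and has a well-defined tangent direction there. So near $K$ the set $S$ is, up to an error that is quadratic in the distance from $K$, a union of sectors of the wedge $W$ of $Q$ at $K$. Let $\theta$ be the opening angle of $W$; it is less than $\pi$ if $K$ is convex and greater than $\pi$ otherwise. In each case we will produce a competitor $S'$ that agrees with $S$ outside $B_\varepsilon(K)$ and has $P(S')<P(S)$. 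If $|S'|\neq|S|$, we restore the area by a standard small variation supported on a smooth part of $\partial S$ away from $K$. That variation costs $O(\delta)$ perimeter for area change $\delta$ with a fixed constant (the first variation is $c\,\delta$). The competitors below save perimeter of order $\varepsilon$ but change area only by $O(\varepsilon^2)$, so the correction is harmless and $S$ cannot be a minimizer.

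\textbf{Two arcs at a corner.} Suppose two arcs $\gamma_1,\gamma_2$ of $\partial S$ leave $K$ and bound a sector of $S$ (or of its complement) between them, making angle $\alpha<\pi$ there. Let $p_i=\gamma_i\cap\partial B_\varepsilon(K)$. Replacing $\gamma_1\cup\gamma_2$ inside $B_\varepsilon$ by the segment $p_1p_2$ changes perimeter from about $2\varepsilon$ to $2\varepsilon\sin(\alpha/2)<2\varepsilon$. The saving is of order $\varepsilon(1-\sin(\alpha/2))$ and the area change is $O(\varepsilon^2)$. Any two arcs bounding the same component of $S$ or of $Q\setminus S$ in $W$ have angle less than $\pi$ between them on at least one side, unless they are collinear through $K$, and for a convex corner $\theta<\pi$ makes this automatic.

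\textbf{Convex corner.} If only one arc $\gamma$ meets $K$ and $\theta<\pi$, then $\gamma$ makes angle $\beta<\pi$ with some edge $e$ on the side where it cuts off a sector, with $\beta\leq\theta<\pi$. We slide the endpoint of $\gamma$ along $e$: replace $\gamma\cap B_\varepsilon$ by the segment from $p=\gamma\cap\partial B_\varepsilon$ to the foot of the perpendicular from $p$ to $e$. This is shorter than $\varepsilon$ by a definite factor whenever $\beta\neq\pi/2$. If $\beta=\pi/2$, we use the other edge, where the angle is $\theta-\pi/2<\pi/2$, which is acute. This gives the claim that $\partial S$ avoids $K$.

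\textbf{Non-convex corner.} The same perpendicular-foot shortcut shows that an arc cannot make an acute angle with an adjacent edge: if the angle $\beta$ between $\gamma$ and edge $e$ satisfies $\beta<\pi/2$, projecting to $e$ shortens by a factor $\sin\beta<1$. For at most one arc, suppose two arcs meet $K$. The arcs split $W$ into sectors alternating between $S$ and its complement. Since $\theta<2\pi$, some sector bounded by two consecutive arcs has angle less than $\pi$, and the two-arc argument applies to it. The exception is a configuration in which every sector between consecutive arcs is $\geq\pi$, which with $\theta<2\pi$ and two interior sector boundaries cannot occur. It is cleanest to count the pieces: the edge sectors must each be at least $\pi/2$ by the no-acute-angle fact, which leaves less than $\pi$ for the middle sector.

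\textbf{Main obstacle.} The hard step is making the area-correction uniform: the saving of order $\varepsilon$ must beat the $O(\varepsilon^2)$ area change after compensation. A secondary difficulty is ensuring the competitor stays inside $Q$ near a non-convex corner. We handle this because the chords are taken inside sectors of angle less than $\pi$ contained in $W$.
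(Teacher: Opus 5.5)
Your argument is correct. Its case logic matches the paper's: first rule out acute angles, so at a non-convex corner each edge sector is at least $\pi/2$ and every sector between consecutive arcs has angle less than $\pi$; then cut across such a sector, as the paper does with the minimal-angle pair via Proposition~\ref{prop:anycorner}. Where you genuinely differ is in how area is controlled. The paper insists on exact, purely local, area-preserving surgeries:
\begin{itemize}
\item the cut $\overline{AB}$ with the compensating triangle ($w=O(\varepsilon^2)$) in Proposition~\ref{prop:convexcorner};
\item Lemma~\ref{lem:chordAreaOrder}, to absorb the $O(\ell^3)$ circular segment;
\item a separate secant-to-tangent interpolation (Proposition~\ref{prop:circleTangentToSide}) for arcs tangent to an edge;
\item bisection of the angle, to reduce the two-component case to the one-component case.
\end{itemize}
You instead make cruder cuts inside $B_\varepsilon(K)$: a perpendicular dropped to an edge, or the chord between two consecutive arcs. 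These save length $\gtrsim\varepsilon$ while moving area by only $O(\varepsilon^2)$. You then repair the area with a fixed volume-fixing normal variation on a regular piece of $\partial S$ away from $K$, whose perimeter cost is linear in the area defect. This buys uniformity: segments and arcs are treated identically, tangency is just the angle-$0$ case of an acute angle, and neither the chord-area lemma nor the tangent proposition is needed. The price is reliance on the first-variation estimate on the regular part of $\partial S$, and a competitor that is not supported near $K$. The paper's constructions, by contrast, are elementary and local, so Propositions~\ref{prop:convexcorner} and~\ref{prop:anycorner} stand on their own as deformation statements.

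Three small slips should be repaired.
\begin{itemize}
\item For $\beta\in(\pi/2,\pi)$ the foot of the perpendicular lies on the extension of $e$ beyond $K$, hence outside $Q$. So it is not true that the shortcut works ``whenever $\beta\neq\pi/2$''. You should always project onto the edge making the acute angle, which exists because the two angles sum to $\theta<\pi$.
\item The sentence ``since $\theta<2\pi$, some sector bounded by two consecutive arcs has angle less than $\pi$'' is false on its own. Take $\theta=3\pi/2$ with arcs leaving at angles $\pi/8$ and $11\pi/8$: the only middle sector has angle $5\pi/4$. It is your subsequent count using the no-acute-angle fact that makes the claim true.
\item When several arcs meet $K$, the perpendicular shortcut must be applied to the arc nearest $e$ in angular order, which makes an even smaller angle with $e$. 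Only then does the toggled region lie entirely in $S$ or entirely in its complement.
\end{itemize}
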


When we compute the relative isoperimetric profile of the region $Q_a$, we will see that in some situations the boundary of an isoperimetric minimizer must intersect the non-convex corner of $Q_a$.  In particular, Theorem~\ref{thm:corner}(2) is sharp.  We will prove the theorem after establishing a number of preliminary propositions.  

\begin{proposition}\label{prop:circleTangentToSide} Let $Q\subseteq\mathbb{R}^2$ be a polygonal region and let $K\in Q$ be a convex corner.  Suppose that $S\subseteq Q$ has a unique smooth component containing $K$.  If this component is tangent to $Q$ at $K$, then $S$ is not an isoperimetric minimizer.
\end{proposition}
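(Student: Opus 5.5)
Near the convex corner $K$, let the two edges of $Q$ meeting at $K$ span the angle $\theta<\pi$. By Theorem~\ref{thm:structure}, the smooth component $\gamma$ of $\partial S$ containing $K$ is a line segment or a circular arc of constant curvature $c$. We are assuming $\gamma$ is tangent at $K$ to one of these edges, say $e_1$. Then near $K$ the region trapped between $\gamma$ and $e_1$ is a thin cusp-shaped sliver. Either it belongs to $S$ or to $Q\setminus S$. Replacing $S$ by its complement if necessary (the complement is also a minimizer, with the same boundary), we may assume the sliver lies in $S$.

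If $\gamma$ is a segment tangent to $e_1$ at $K$, then $\gamma$ lies along $e_1$ near $K$. This contradicts $\gamma$ being a component of $\partial S\cap \text{int}(Q)$, so $\gamma$ is a genuine arc. The plan is then to build a competitor of the same area and strictly smaller perimeter. For small $\varepsilon>0$, cut off the sliver near $K$ with a short segment $\sigma_\varepsilon$ from the point $p_\varepsilon\in\gamma$ at distance $\varepsilon$ from $K$ to its orthogonal projection onto $e_1$. The sliver has width $w_\varepsilon\approx c\varepsilon^2/2$ at that point, so $|\sigma_\varepsilon|=O(\varepsilon^2)$. The removed arc of $\gamma$ has length about $\varepsilon$, and the removed area $A_\varepsilon$ is about $c\varepsilon^3/6$. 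So the modified set $S_\varepsilon$ has perimeter $P(S)-\varepsilon+O(\varepsilon^2)$ and area $|S|-A_\varepsilon$.

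Next, restore the lost area $A_\varepsilon$ elsewhere. Take a point of $\gamma$ (or of another boundary curve) away from all corners and perturb it outward by a normal variation. By the first variation formula the perimeter changes by $c\,A_\varepsilon+o(A_\varepsilon)=O(\varepsilon^3)$. The net perimeter change is therefore $-\varepsilon+O(\varepsilon^2)<0$ for small $\varepsilon$, which contradicts minimality.

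The main obstacle is making the local geometry precise. I will write $\gamma$ near $K$ as a graph $y=\tfrac{c}{2}x^2+O(x^3)$ over $e_1$ (placed on the $x$-axis) with $K$ at the origin. I must also check that the truncation stays inside $Q$: since $\theta<\pi$, the sliver near $K$ lies in $Q$ and $e_2$ is away from it. Case (ii), where the sliver lies in $Q\setminus S$, is handled by the complement symmetry already noted. I expect this to go through, because the gain is first order in $\varepsilon$ while every cost is higher order.
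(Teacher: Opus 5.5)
Your argument is correct, but it takes a different route from the paper's.

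\textbf{The paper's proof.} It is purely local and exact. The paper first passes to the complement so that $S$ lies on the disk side of the arc, which is the opposite choice from yours. It then picks a point $T$ on the arc near $K$ and lets $T'$ be the point where the tangent line at $T$ meets the edge. Finally it considers the segments $\overline{TX}$, with $X$ sliding along the edge from $K$ to $T'$.
\begin{itemize}
\item The chord $\overline{TK}$ removes area and $\overline{TT'}$ adds area, so by the intermediate value theorem some $\overline{TX}$ preserves $|S|$ exactly.
\item Since $\angle TT'K$ is obtuse, $|\overline{TX}|\le|\overline{TK}|$, which is shorter than the arc, so the perimeter drops.
\end{itemize}
No asymptotics and no area compensation elsewhere are needed. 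The argument does, however, use that $\gamma$ is exactly a circular arc (chord inside the circle, tangent outside).

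\textbf{Your proof.} You use the standard ``first-order gain versus higher-order cost'' scheme. Truncating the cusp saves about $\varepsilon$ of length while losing only $O(\varepsilon^3)$ of area. You restore that area by a compactly supported normal variation at a point of $\gamma$ in $\text{int}(Q)$ far from $K$, at perimeter cost $O(\varepsilon^3)$. This requires an area-fixing variation and the local expansions, but it is more robust. It uses only that $\gamma$ is $C^1$ up to $K$ and tangent to the edge there, not that $\gamma$ is circular: the sliver width is then $o(\varepsilon)$ and its area $o(\varepsilon^2)$, so the net change is still $-\varepsilon+o(\varepsilon)$.

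\textbf{A step to state explicitly.} You need to know that the sliver lies entirely in $S$ or entirely in $Q\setminus S$, and that the truncated region contains no other part of $\partial S$. For this you need a neighborhood of $K$ that meets $\partial S$ only in $\gamma$. This follows from Proposition~\ref{prop:finiteboundary} together with the hypothesis that $\gamma$ is the unique smooth component containing $K$, exactly as the paper uses it.
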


\begin{proof} By way of contradiction, assume that $S$ is an isoperimetric minimizer. We will find a new region $S'$ with $|S'| = |S|$ and with $P(S') < P(S)$.

Since $\partial S$ has constant mean curvature, the fact that it is tangent to $Q$ at $K$ implies that the smooth component of $\partial S$ containing $K$ is an arc of a circle.   Let $Q_1$ denote the side of $Q$ which is tangent to the arc.

\begin{center}
    \begin{tikzpicture}[scale=1.2]
        \draw[ultra thick] (0,-5) -- (3,-5);
        \filldraw (0,-5) circle (2pt) node[anchor=north east] {$K$};
        \draw (2.3,-3.9) node {$\partial S$};
        \draw (2.8,-5.3) node {$Q_1$};
        \draw[thick] (0,-5) arc (-90:-53.13:5);
        \draw[dashed] (3,-4) -- (0,-5);
        \draw[dashed] (3,-4) -- (1.67,-5) node[anchor=north] {$T'$};
        \filldraw (1.67,-5) circle (2pt);
        \filldraw (3,-4) circle (2pt) node[anchor=north west] {$T$};
    \end{tikzpicture}
    \captionof{figure}{Neighborhood of a corner $K$ with a circular arc.}\label{fig:arcTangentToCorner}
\end{center}

By Proposition \ref{prop:finiteboundary}, $\partial S$ has only finitely many smooth boundary components. Therefore, we may find a neighborhood of $K$, which intersects $\partial S$ only in a portion of this circular arc.  Given any point $T$ on this circular arc, we let $T'$ denote the intersection of the tangent line at $T$ and $Q_1$, see Figure \ref{fig:arcTangentToCorner}.  By choosing $T$ close enough to $K$ on the circular arc, we may assume that the angle $\angle TKT'$ is acute and the angle $
\angle TT'K$ is obtuse.

The secant line from $T$ to $K$ is shorter than the circular arc between these points.  Moreover, since the angle $\angle KT'T$ is obtuse, it follows that as we deform the secant line to the tangent line through a family of line segments, the lengths decrease.  In particular, they are all shorter than the circular arc.

By replacing $S$ with its complement, we assume without loss of generality that the region $S$ lies inside the circular arc.   It then follows that the secant line underestimates the area while the tangent line overestimates it.  In particular, by continuity, there is some line segment in the deformation whose area agrees with the area of $S$.  Thus, by replacing the arc by this line segment, we obtain the desired subset~$S'$.

\end{proof}

The following lemma will allow us to extend results concerning line segments to circular arcs.  Proposition \ref{prop:convexcorner} contains the key construction in proving Theorem \ref{thm:corner}.

\begin{lemma}\label{lem:chordAreaOrder}Suppose $C$ is a circle of radius $r$.  Then the smaller area determined by a chord of length $\ell$ has area  $O(\ell^3)$.
\end{lemma}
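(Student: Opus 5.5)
We compute the area of the circular segment explicitly and Taylor expand it in terms of the half-angle subtended at the center. A chord of length $\ell$ in a circle of radius $r$ subtends a central angle $2\theta$ with $\ell = 2r\sin\theta$, where $\theta\in[0,\pi/2]$ for the smaller segment. The smaller segment is the circular sector minus the isosceles triangle, so its area is
\[
A(\theta) = r^2\theta - r^2\sin\theta\cos\theta = \tfrac{r^2}{2}\bigl(2\theta - \sin 2\theta\bigr).
\]

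Expanding $\sin 2\theta = 2\theta - \tfrac{(2\theta)^3}{6} + O(\theta^5)$ gives $A(\theta) = \tfrac{2r^2}{3}\theta^3 + O(\theta^5)$. To avoid relying only on asymptotics, one can note that $x - \sin x \le x^3/6$ holds for all $x\ge 0$, which yields the global bound $A(\theta)\le \tfrac{2}{3}r^2\theta^3$.

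Next we convert from $\theta$ to $\ell$. On $[0,\pi/2]$ we have $\sin\theta \ge \tfrac{2}{\pi}\theta$ by concavity, so $\theta \le \tfrac{\pi}{2}\sin\theta = \tfrac{\pi \ell}{4r}$. Substituting gives
\[
A \le \tfrac{2}{3} r^2\Bigl(\tfrac{\pi\ell}{4r}\Bigr)^3 = \tfrac{\pi^3}{96}\,\frac{\ell^3}{r},
\]
so $A = O(\ell^3)$, with constant depending only on $r$. Near $\ell=0$ this is also sharp: $A \sim \ell^3/(12r)$.

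There is no real obstacle here. The only point requiring care is to record the dependence of the constant on $r$, since later uses (for example in Proposition~\ref{prop:convexcorner}) will compare this $O(\ell^3)$ error against first-order perimeter gains for a fixed circle of radius $r$. A uniform constant $C(r)=\pi^3/(96 r)$ is exactly what those applications need.
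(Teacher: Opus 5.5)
Your proof is correct and follows the same route as the paper: both start from the closed-form segment area $\frac{r^2}{2}(2\theta-\sin 2\theta)$ with $\theta=\arcsin(\ell/2r)$ and read off its cubic behavior. The paper does this locally, checking $g(0)=g'(0)=g''(0)=0\neq g'''(0)=\frac{1}{2r}$ in the variable $\ell$; you work in $\theta$ and use $x-\sin x\le x^3/6$ with Jordan's inequality, which upgrades the local statement to the explicit global bound $A\le \frac{\pi^3}{96 r}\ell^3$ for all chords, and your asymptotic $A\sim \ell^3/(12r)$ matches the paper's third derivative.
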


\begin{proof}
The area is given by $g(\ell):=\frac{1}{2} r^2 \left(2\arcsin\left(\frac{\ell}{2r}\right)-\sin\left(2\arcsin\left(\frac{\ell}{2r}\right)\right)\right)$.
 An easy calculation shows that $g(0)=\frac{dg}{d\ell}(0) = \frac{d^2 g}{d\ell^2}(0)= 0$, but $\frac{d^3g}{d\ell^3}(0) = \frac{1}{2r}\neq 0$.
\end{proof}

\begin{proposition}\label{prop:convexcorner}Suppose $Q\subseteq\mathbb{R}^2$ is a polygonal region and $K\in Q$ is a convex corner.  Suppose $S\subseteq Q$ and $\partial S$ contains exactly one smooth component containing $K$, which is either a line segment or an arc of a circle.  Then there exists $S' \subseteq Q$, an arbitrarily small deformation of $S$, with the following properties:

\begin{itemize}\item $|S| = |S'|$
\item $P(S) > P(S')$
\item No smooth component of $\partial S'$ contains $K$.
\end{itemize}

\end{proposition}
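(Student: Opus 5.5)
We argue by cutting the corner off and then compensating for the lost area. Let $\gamma$ be the unique smooth component of $\partial S$ through $K$, and let $Q_1,Q_2$ be the two edges of $Q$ meeting at $K$ with interior angle $\theta<\pi$. By Proposition~\ref{prop:finiteboundary} we fix a small disk $B$ about $K$ meeting $\partial S$ only in $\gamma$. If $\gamma$ is a circular arc tangent to an edge at $K$, Proposition~\ref{prop:circleTangentToSide} already gives a shorter competitor of the same area. Its construction replaces an arc near $K$ by a segment $TT''$ with $T''\in Q_1$ away from $K$, so the new boundary avoids $K$. We may therefore assume that $\gamma$ leaves $K$ transversally, making angles $\alpha\in(0,\theta)$ with $Q_1$ and $\theta-\alpha$ with $Q_2$. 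Replacing $S$ by $Q\setminus S$ if needed, we assume $S$ lies in the sector between $\gamma$ and $Q_1$ near $K$.

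\textbf{Cutting the corner.} For small $\varepsilon>0$, let $T_\varepsilon$ be the point of $\gamma$ at distance $\varepsilon$ from $K$, and let $U_\varepsilon\in Q_1$ be chosen so that the segment $T_\varepsilon U_\varepsilon$ is perpendicular to $Q_1$. If this foot is not available, for instance when $\alpha\ge\pi/2$, we instead take $U_\varepsilon$ on $Q_1$ with $|KU_\varepsilon|=\varepsilon$. Define $S_\varepsilon$ by replacing the part of $\gamma$ from $K$ to $T_\varepsilon$ with the segment $T_\varepsilon U_\varepsilon$. This removes from $S$ the small region bounded by $K$, $T_\varepsilon$ and $U_\varepsilon$. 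For a line segment this region is the triangle $KT_\varepsilon U_\varepsilon$. The key length comparison is that in this triangle $|T_\varepsilon U_\varepsilon|<|KT_\varepsilon|$ strictly, by the law of sines: in the perpendicular case $|T_\varepsilon U_\varepsilon|=\varepsilon\sin\alpha$, and in the isosceles case $|T_\varepsilon U_\varepsilon|=2\varepsilon\sin(\alpha/2)<\varepsilon$ since $\alpha<\theta<\pi$. Convexity of $K$, that is $\theta<\pi$, is exactly what makes the angle conditions work. So the perimeter drops by at least $c\varepsilon$ for some $c=c(\alpha)>0$, while the area drops by $\Theta(\varepsilon^2)$. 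When $\gamma$ is an arc, Lemma~\ref{lem:chordAreaOrder} shows that replacing arc by chord changes the area by $O(\varepsilon^3)$, and the arc length exceeds the chord length. Hence the same estimates hold: perimeter decrease $\ge c\varepsilon-o(\varepsilon)$ and area loss $A_\varepsilon=O(\varepsilon^2)$.

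\textbf{Restoring the area.} We must add back $A_\varepsilon=O(\varepsilon^2)$ at perimeter cost $o(\varepsilon)$, away from $K$. We choose a point $p$ in the relative interior of $\gamma$, or of any smooth component of $\partial S$, which is not in $B$. We push that curve outward by a normal variation $t\varphi$, where $\varphi$ is a fixed bump function supported near $p$. Area changes linearly in $t$ with nonzero derivative $\int\varphi$. Length changes by $O(t)$, in fact $t\kappa\int\varphi+O(t^2)$. Taking $t=\Theta(\varepsilon^2)$ exactly restores the area by the intermediate value theorem, at a length cost of $O(\varepsilon^2)$. Then $P(S')\le P(S)-c\varepsilon+O(\varepsilon^2)<P(S)$ for small $\varepsilon$, $|S'|=|S|$, and $S'$ is an arbitrarily small deformation of $S$. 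Its boundary near $K$ is the segment $T_\varepsilon U_\varepsilon$, which meets $\partial Q$ at $U_\varepsilon\neq K$, so no smooth component of $\partial S'$ contains $K$.

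\textbf{Main obstacle.} The hardest step is making the cut-corner comparison uniform over both the segment and arc cases and over all angles $\alpha$. The two points to check are that the new segment lies in $Q\cap B$, and that the linear-order saving $c\varepsilon$ is not cancelled. For arcs the chord–arc error enters only at order $\varepsilon^3$ in area and is favorable in length. The tangent cases $\alpha\in\{0,\theta\}$ are delegated to Proposition~\ref{prop:circleTangentToSide}, and a segment cannot be tangent at a corner unless it runs along an edge, which is excluded.
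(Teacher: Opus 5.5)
Your handling of the case $\alpha\ge\pi/2$ fails, and this is a genuine gap. In the isosceles fallback the new segment has length $|T_\varepsilon U_\varepsilon|=2\varepsilon\sin(\alpha/2)$. That is less than $\varepsilon$ only when $\alpha<\pi/3$. By the law of sines, $|T_\varepsilon U_\varepsilon|<|KT_\varepsilon|$ iff the angle at $U_\varepsilon$, namely $(\pi-\alpha)/2$, exceeds $\alpha$. For $\alpha\ge\pi/2$ the new segment has length at least $\sqrt2\,\varepsilon$, so the cut \emph{increases} the perimeter at first order.

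No other choice of $U$ rescues a cut toward $Q_1$. By the law of cosines, $|T_\varepsilon U|^2=\varepsilon^2+u^2-2\varepsilon u\cos\alpha\ge\varepsilon^2$ for every $U\in Q_1$ at distance $u$ from $K$ once $\cos\alpha\le 0$. The inequality $\alpha<\theta<\pi$ you cite does not yield the bound you need. So convexity of $K$ has not actually been used anywhere in your argument.

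The missing idea is exactly where convexity enters, and it is what the paper does. The curve $\gamma$ makes angles $\alpha$ and $\theta-\alpha$ with the two edges, and these sum to $\theta<\pi$, so at least one of them is acute. Label that edge $Q_1$ from the start. For an arc, use the tangent angle; the chord angle $\alpha_\varepsilon\to\alpha$ then stays acute for small $\varepsilon$.

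The side on which $S$ lies does not matter. If the cut-off piece lies in $Q\setminus S$, add it to $S$ and restore the area by pushing inward at $p$ instead of outward. With this relabeling the perpendicular foot always lies on $Q_1$, the saving is at least $\varepsilon(1-\sin\alpha_\varepsilon)\ge c\varepsilon$, and the rest of your argument goes through.

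Once repaired, your proof takes a genuinely different route from the paper's. The paper compensates area locally. It replaces $\overline{KP}$ by the broken path $A\to B\to P$, whose thin triangle $BCP$ (with $w=O(\varepsilon^2)$) balances the triangle $ACK$. It then gets $|KP|>|AB|+|BP|$ from $|KC|>2|BC|+|AC|$, and in the arc case it adds the chord--arc area $\delta$.

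You instead separate a first-order length saving at the corner from a second-order area correction made by a remote first-variation bump. This is more modular and avoids the $\lambda,\ell,\theta_1$ bookkeeping of the paper's arc case. The paper's construction keeps the whole modification on the single component through $K$, which makes the non-collision check in the proof of Theorem~\ref{thm:corner} immediate.
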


\begin{proof}
    We will first consider the case when the smooth boundary component containing $K$ is a line segment. For a small enough neighborhood of $K$, we can consider only the corner and this single line segment. Let $P$ be a point on $\partial S$ and let $\ell$ be the length of this line. Note that since $K$ is a convex corner, $\partial S$ makes at least one acute angle with $\partial Q$. Thus, without loss of generality, we have the following picture for a neighborhood of $K$.

    \begin{center}
        \begin{tikzpicture}[scale=0.8]
            \draw[ultra thick] (-4.5,2) -- (0,2);
            \draw[thick] (-4.5,2) -- (0,5);
            \filldraw (-4.5,2) circle (2pt) node[anchor=north east] {$K$};
            \draw (-2,3.4) node {$\ell$};
            \draw (-1.3,4.5) node {$\partial S$};
            \draw (-1.3,1.7) node {$\partial Q$};
            \draw (-3.8,2) arc (0:25:1);
            \draw (-3.8,2.3) node[anchor=west] {$\theta$};
            \filldraw (0,5) circle (2pt) node[anchor=north west] {$P$};
            \begin{scope}[shift={(6,0)}]
                \draw[ultra thick] (-4.5,2) -- (0,2);
                \draw[thick] (-4.5,2) -- (0,5);
                \filldraw (-4.5,2) circle (2pt) node[anchor=north east] {$K$};
                \filldraw (-3.2,2) circle (2pt) node[anchor=north west] {$A$};
                \filldraw (-3.2,3.5) circle (2pt) node[anchor=south east] {$B$};
                \draw (-3.8,2) node[anchor=north] {$\varepsilon$};
                \draw (-3.2,3.2) node[anchor=east] {$w$};
                \draw (-1.5,4.7) node {$\partial S'$};
                \draw (-1.3,1.7) node {$\partial Q$};
                \draw (-3.8,2) arc (0:25:1);
                \draw (-3.8,2.3) node[anchor=west] {$\theta$};
                \filldraw (0,5) circle (2pt) node[anchor=north west] {$P$};
                \draw[thick,dashed] (-3.2,2) -- (-3.2, 3.5) -- (0,5);
                \filldraw (-3.2,2.85) circle (2pt) node[anchor=north west] {$C$};
            \end{scope}
        \end{tikzpicture}
        \captionof{figure}{On the left we have $\partial S$ intersecting a corner $K$. On the right we have a deformation $\partial S'$ which avoids corner $K$.}\label{fig:deformAwayFromCorner}
    \end{center}

    We will now deform $\partial S$ to $\partial S'$ with the desired properties. Consider a point $A$ which is $\varepsilon$ units away from the corner, and let $B$ be the point directly above $A$ so that $|S| = |S'|$. Let $C$ be the intersection point of $\overline{KP}$ and $\overline{AB}$, and let $w$ be the length of $\overline{BC}$. Note that in order for $|S| = |S'|$, we need the triangles $\Delta ACK$ and $\Delta BCP$ to have the same area. In other words, we need $$\frac{1}{2}\varepsilon^2 \tan\theta = \frac{1}{2}(\ell - \varepsilon\sec\theta)w\cos\theta.$$ This is true when
    \[
        w = \frac{\varepsilon^2\tan\theta}{\ell\cos\theta - \varepsilon}.
    \]
    Having achieved that $|S| = |S'|$, we now compare perimeters of the boundaries.  Note that $P(S) > P(S')$ if and only if $\left|\,\overline{KP}\,\right| > \left|\,\overline{AB}\,\right| + \left|\,\overline{BP}\,\right|.$
    As $|\overline{KP}|  = |\overline{KC}| + |\overline{CP}|$, it follows from the triangle inequality that this inequality is true if
    \begin{equation}\label{eq:lowerPerim}
        \left|\,\overline{KC}\,\right| > 
        2\left|\,\overline{BC}\,\right| + 
        \left|\,\overline{AC}\,\right|.
    \end{equation}
    Indeed, \begin{align*} |\overline{KP}| &= |\overline{KC}|+|\overline{PC}|\\
    & > 2|\overline{BC}| +|\overline{AC}| + |\overline{PC}|\\
    &= (|\overline{BC}| + |\overline{AC}|) + (|\overline{BC}| + |\overline{PC}|)\\
    & \geq |\overline{AB}| + |\overline{BP}|.\end{align*}
    
    Substituting into inequality \eqref{eq:lowerPerim} and rearranging, we find that inequality \eqref{eq:lowerPerim} holds if and only if 
    \begin{equation}\label{eq:lowerPerim2}
        2\varepsilon^2\tan\theta < \varepsilon(\ell - \varepsilon\sec\theta)(1-\sin\theta).
    \end{equation}
    Notice that since $\theta \in (0,\tfrac{\pi}{2})$, all factors in (\ref{eq:lowerPerim2}) are positive.  In addition, the left side is $O(\varepsilon^2)$ while the right side is $O(\varepsilon)$.  It follows that for all sufficiently small $\varepsilon$, the inequality \eqref{eq:lowerPerim2} holds. Hence, the proof is complete in the case where the smooth boundary component of $\partial S$ is a line segment.

    Now we will consider the case when the smooth boundary component containing $K$ is the an of a circle. By Proposition \ref{prop:circleTangentToSide}, we can assume that $\partial S$ does not touch $K$ tangentially to $Q$. Furthermore, since $K$ is a convex corner, $\partial S$ makes an acute angle at $K$ with at least one side of $\partial Q$. Thus, without loss of generality, we can assume that we have a neighborhood of $K$ which looks like one of the following, depending on the curvature.

    \begin{center}
        \begin{tikzpicture}[scale=0.8]
            \draw[ultra thick] (-4.5,2) -- (0,2);
            \draw (-4.5,2) -- (0,5);
            \filldraw (-4.5,2) circle (2pt) node[anchor=north east] {$K$};
            \draw (-2,3.4) node {$\ell$};
            \draw (-1.3,5.1) node {$\partial S$};
            \draw (-1.3,1.7) node {$\partial Q$};
            \draw[thick] (-4.5,2) arc (156.4:91:5);
            \draw (-3.8,2) arc (0:25:1);
            \draw (-3.8,2.3) node[anchor=west] {$\theta$};
            \filldraw (0,5) circle (2pt) node[anchor=north west] {$P$};
        \begin{scope}[shift={(6,0)}]
            \draw[ultra thick] (-4.5,2) -- (0,2);
            \draw (-4.5,2) -- (0,5);
            \filldraw (-4.5,2) circle (2pt) node[anchor=north east] {$K$};
            \draw (-2,4) node {$\ell$};
            \draw (-1.3,3) node {$\partial S$};
            \draw (-1.3,1.7) node {$\partial Q$};
            \draw[thick] (-4.5,2) arc (-83.34:-30:6);
            \draw (-3.8,2) arc (0:25:1);
            \filldraw (0,5) circle (2pt) node[anchor=north west] {$P$};
        \end{scope}
        \end{tikzpicture}
        \captionof{figure}{Both possibilities for a circular arc to touch a corner acutely.}\label{fig:arcTouchingCorner}
    \end{center}
    
    Consider a point $P \in \partial S$, and let $\ell$ be the length of the secant line $\overline{KP}$. Now we can repeat the argument for the straight line case to create $\partial S'$ away from corner $K$. The only difference is that now $w$ will have to take into account the area, say $\delta$, between $\partial S$ and the secant line $\overline{KP}$. By doing this we can ensure that $|S| = |S'|$. Following the same logic as before, we will have that $P(S) > P(S')$ if
    \begin{equation}\label{eq:lowerPerim3}
        2\varepsilon^2\tan\theta  \pm \delta < \varepsilon(\ell - \varepsilon\sec\theta)(1-\sin\theta).
    \end{equation}
    Let $\varepsilon = \lambda \ell$. By Lemma \ref{lem:chordAreaOrder}, $\delta = O(\ell^3)$. Therefore, inequality \eqref{eq:lowerPerim3} is true if and only if there exists some $\lambda \in (0,1)$ and some $\ell > 0$ such that
    \[
        2\lambda^2\ell^2\tan\theta  \pm O(\ell^3) < \lambda\ell^2(1 - \lambda\sec\theta)(1-\sin\theta).
    \]
    We divide by $\ell^2$ and rearrange to see that inequality \eqref{eq:lowerPerim3} is true if and only if
    \[
        O(\ell) < \lambda(1-\sin\theta) - \lambda^2\sec\theta(1+\sin\theta).
    \]
    Let $\theta_{1}$ be the larger of the two angles formed by the tangent line at $K$ and the secant line at $K$. Then
    \[
        \lambda(1-\sin\theta_{1}) - \lambda^2\sec\theta_{1}(1+\sin\theta_{1}) < \lambda(1-\sin\theta) - \lambda^2\sec\theta(1+\sin\theta)
    \]
 for any $0\leq \theta < \theta_1$.   Thus, we have $P(S) > P(S')$ if
    \[
        O(\ell) < \lambda(1-\sin\theta_{1}) - \lambda^2\sec\theta_{1}(1+\sin\theta_{1}).
    \]
    We can choose $\lambda \in (0,1)$ so that the right-hand side is positive. Also, since the right-hand side is independent of $\ell$, we can then shrink $\ell$ to achieve our result. The circular arc case is done.
\end{proof}


Now we will move to the case when more than one smooth boundary component is touching a corner.

\begin{proposition}\label{prop:anycorner}Suppose $Q\subseteq\mathbb{R}^2$ is a polygonal region and $K\in Q$ is a corner.  Suppose $S\subseteq Q$ and $\partial S$ contains exactly two smooth components containing $K$ and that the interior angle of these two smooth components does not contain the corner sides at $K$.  Then there exists $S' \subseteq Q$, an arbitrarily small deformation of $S$, with the following properties:

\begin{itemize}\item $|S| = |S'|$
\item $P(S) > P(S')$
\item No smooth component of $\partial S'$ contains $K$.
\end{itemize}

\end{proposition}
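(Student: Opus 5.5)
We plan to imitate the construction of Proposition~\ref{prop:convexcorner}: replace the corner point by a short ``bridge'' segment that cuts across the wedge formed by the two smooth components near $K$. Let $\gamma_1,\gamma_2$ be the two smooth components of $\partial S$ through $K$. By Proposition~\ref{prop:finiteboundary}, we may choose a neighborhood $U$ of $K$ meeting $\partial S$ only in $\gamma_1\cup\gamma_2$. The interior angle between $\gamma_1$ and $\gamma_2$ at $K$ contains neither side of $Q$ at $K$. Hence the wedge $W$ between them (after replacing $S$ by $Q\setminus S$ if necessary, we may take $W\cap U\subseteq S$) lies in $\text{int}(Q)$ near $K$, except for the point $K$ itself. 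Its opening angle $\alpha$ lies in $[0,\pi)$ when measured with respect to the tangent lines, since $W$ sits inside the convex or non-convex corner angle and excludes its sides. The case $\alpha=0$ (tangent arcs) is treated by a limiting or separate argument, as explained below.

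\textbf{Line segment case.} Suppose first that $\gamma_1,\gamma_2$ are segments $\overline{KP_1}$ and $\overline{KP_2}$ with angle $\alpha<\pi$. Pick $A_i$ on $\overline{KP_i}$ at distance $\varepsilon$ from $K$. Removing the triangle $KA_1A_2$ from $S$ replaces the length $2\varepsilon$ by $|A_1A_2|=2\varepsilon\sin(\alpha/2)<2\varepsilon$. This saves perimeter of order $\varepsilon$ and loses area $\tfrac12\varepsilon^2\sin\alpha=O(\varepsilon^2)$. We then restore the area by pushing $\gamma_1$ outward far from $K$. For instance, we move the endpoint $P_1$ slightly, exactly as the point $B$ is chosen in Proposition~\ref{prop:convexcorner}, or we add a small bump at an interior point of $\gamma_1$. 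A bump enclosing area $\delta$ costs perimeter $O(\delta^{1/2}\cdot\delta^{1/2})$, and more carefully it costs $O(\delta)$ if we translate a subsegment parallel to itself: translating a piece of length $L$ by $h$ costs $2h$ and gains area $Lh$. With $\delta=O(\varepsilon^2)$ this costs $O(\varepsilon^2)$, which is dominated by the gain $2\varepsilon(1-\sin(\alpha/2))$ for small $\varepsilon$. The new set $S'$ has $K\notin\partial S'$, and the bridge $A_1A_2$ lies in $\text{int}(Q)$ because $W$ avoids the sides of $Q$.

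\textbf{Circular arc case.} If the $\gamma_i$ are arcs of radius $r$, we first compare each arc with its chord, as in the second half of Proposition~\ref{prop:convexcorner}. The region between an arc and the chord $\overline{KA_i}$ of length about $\varepsilon$ has area $O(\varepsilon^3)$ by Lemma~\ref{lem:chordAreaOrder}, and the arc exceeds the chord in length by $O(\varepsilon^3)$. Cutting along $\overline{A_1A_2}$ therefore still saves $2\varepsilon(1-\sin(\alpha/2))+O(\varepsilon^2)$ in perimeter and changes the area by $O(\varepsilon^2)$. We compensate for the area by an $O(\varepsilon^2)$ normal perturbation of $\gamma_1$ away from $K$, whose perimeter cost is also $O(\varepsilon^2)$ by first variation.

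\textbf{Main obstacle.} The main difficulty is the degenerate case $\alpha=0$, where the two arcs are tangent at $K$ and form a cusp, so the linear gain disappears. There I would take $A_1,A_2$ at equal arclength $s$ from $K$. The bridge then has length $O(s^2)$, versus $2s$ removed, so the gain is still of order $s$. The removed cusp area is only $O(s^3)$, so the compensation is even cheaper and the argument goes through. I also need to check that the area-compensating perturbation can be performed inside $Q$ without touching other corners, which follows from choosing it on a portion of $\gamma_1$ in $\text{int}(Q)$.
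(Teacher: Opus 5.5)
Your argument is correct, but it is organized differently from the paper's. The paper builds no new deformation. It bisects the wedge (the half-angle is then acute), treats the bisector as if it were a wall of $Q$, and runs the construction of Proposition~\ref{prop:convexcorner} in each half. With the same $\varepsilon$ on both sides, the two segments $\overline{AB_1}$ and $\overline{AB_2}$ perpendicular to the bisector form a single segment across the tip, so the resulting picture is close to your bridge.

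The difference is in how area and length are accounted for. In the paper, the area is restored locally in each half by tilting $\overline{KP_i}$ to $\overline{B_iP_i}$, so that the triangles $ACK$ and $BCP$ have equal area. The length saving is then inequality~\eqref{eq:lowerPerim2} with $\theta=\alpha/2$. You instead cut along the chord $\overline{A_1A_2}$ and read off the exact saving $2\varepsilon(1-\sin(\alpha/2))$. You then put back the $O(\varepsilon^2)$ of area by translating, or normally varying, a compact piece of $\gamma_1$ away from $K$, at perimeter cost $O(\varepsilon^2)$.

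The paper's route is shorter, because the estimate was already done in Proposition~\ref{prop:convexcorner}. Yours is self-contained, makes the role of $\alpha<\pi$ transparent, and treats the tangential cusp $\alpha=0$ explicitly. The bisection reduction does not address that case. There each arc is tangent to the bisector, which is exactly the configuration that Proposition~\ref{prop:convexcorner} hands off to Proposition~\ref{prop:circleTangentToSide}, whose argument is written for a genuine wall.

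One caveat you share with the paper: at a reflex corner, ``$W$ excludes the sides'' does not by itself give $\alpha<\pi$, unless the interior angle is read as the non-reflex one. It does hold in the only application, Theorem~\ref{thm:corner}(2), where both components make non-acute angles with both sides.
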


\begin{proof}
    Since the interior angle these components make does not contain the corner sides, the angle within $Q$ is less than $\pi$. Hence, that angle bisected will be acute. Also, from Theorem \ref{thm:structure}, we have that these two smooth components of $\partial S$ which contain $K$ have the same curvature. Thus, we can simply bisect the interior angle and run the argument from Proposition \ref{prop:convexcorner} on each side to create an arbitrarily small deformation of $S$ with the desired properties. 
\end{proof}

We are ready to prove Theorem \ref{thm:corner}.

\begin{proof}[Proof of Theorem \ref{thm:corner}]  Let $K$ be a corner and let $Q_1$ and $Q_2$ denote the two sides of $Q$ containing $K$.  For both statements $(1)$ and $(2)$, we will prove the contrapositive.

$(1)$ Assume that $K$ is a convex corner.  Our goal is to show that if $\partial S$ contains at least one smooth boundary components containing $K$, then $S$ is not an isoperimetric minimizer.  If there is exactly one smooth component of $\partial S$ containing $K$, it makes an acute angle with at least one of $Q_1$ or $Q_2$.  Then the result follows from either Proposition \ref{prop:convexcorner} or Proposition \ref{prop:circleTangentToSide}, depending on the measure of this angle.

Thus, we assume there are at least two smooth boundary components of $\partial S$ containing $K$.  If we fix a side, say $Q_1$, of $Q$ containing $K$, then we observe that the angles at $K$ between $Q_1$ and each smooth component of $\partial S$ are almost distinct: if the smooth boundary components are segments, they must be distinct while if they are arcs of circles, at most two share an angle, see Figure \ref{fig:tangent}.  This follows because each smooth component of $\partial S$ has the same curvature. Hence, if they agree to first order and they have the same concavity relative to $Q_1$, then they are the same smooth boundary component.

\begin{center}
\centering
\begin{tikzpicture}
    \begin{axis}[
        axis lines = middle,
        ticks = none,
        xlabel = {},
        ylabel = {},
        xmin = -2.5, xmax = 1.5,
        ymin = -1.5, ymax = 1.5,
        axis line style = {draw=none}, 
        axis equal
    ]
        \addplot [
            domain=-3:0, 
            samples=2, 
            ultra thick, 
            black
        ] {-1/3 * x};
        
        \addplot [
            domain=-3:0, 
            samples=2, 
            ultra thick, 
            black
        ] {1/5 * x};
        
        \addplot [
            domain=0:1, 
            samples=100, 
            thick, 
            black
        ] {1 - sqrt(1 - x^2)};
        
        \addplot [
            domain=0:1, 
            samples=100, 
            thick, 
            black
        ] {sqrt(1 - x^2) - 1};

\draw (-2,1) node {$\partial Q$};
\draw (-2,-0.6) node {$\partial Q$};
\draw (0.7,0.85) node {$\partial S$};
\draw (0.7,-0.85) node {$\partial S$};
\draw (0,0.2) node {$K$};

    \end{axis}
\end{tikzpicture}
\captionof{figure}{Two smooth boundary components which are tangent at $K$.}\label{fig:tangent}
\end{center}

Since the angle is convex, any smooth boundary component must make an acute angle with at least one side of $Q$, say $Q_1$, at $K$.  Choose the smooth boundary component making the least angle with this side; if there are two, choose the circular arc which locally bends towards $Q_1$.  Now, we apply the proof of Proposition \ref{prop:convexcorner} or the proof of Proposition \ref{prop:circleTangentToSide} to this smooth boundary component. As long as this new smooth boundary component does not intersect the other smooth boundary components of $\partial S$, we obtain a region $S'$ with the same area but smaller perimeter.  

From the proof of Proposition \ref{prop:convexcorner}, the segment $w=\overline{CB}$ has length $O(\varepsilon^2)$.  Since the separation between different smooth components of $\partial S$ grows as $O(\varepsilon)$ and there are only finitely many smooth components, it follows that for $\varepsilon$ sufficiently small, the new boundary component of $S'$ does not intersect any of the other boundary components of $S$.

In addition, from the proof of Proposition \ref{prop:circleTangentToSide}, if we select $T$ such that the angle $\angle TKT'$ is smaller than the second smallest angle made at $K$, then the secant line between $K$ and $T$ does not intersect any other smooth boundary component of $\partial S$, and thus, none of the interpolating line segments does either. Therefore, in this case, we may form $S'$ in such a way that the new smooth boundary component does not intersect any of the other smooth boundary components of $S$. This completes the proof of $(1)$.

\bigskip

$(2)$  We will prove the contrapositive.  Assume that $\partial S$ contains at least two smooth boundary components containing $K$.  If any such smooth boundary component makes an acute angle with either of the two sides of $Q$ containing $K$, we repeat the proof of $(1)$ to show that $S$ is not an isoperimetric minimizer.  Thus, we may assume that all smooth boundary components make an obtuse or right angle with the two sides of $Q$ containing $K$.  It follows that any two smooth boundary components of $\partial S$ satisfy the hypothesis of Proposition \ref{prop:anycorner}.

As in the proof of (1), the angles made by any smooth boundary component of $\partial S$ and $Q_1$ are almost distinct.  Choose two such smooth boundary components for which the angle between them is minimized.  (If this minimum is achieved by multiple pairs of smooth boundary components, choose any pair of them.)  Now we apply the construction used in the proof of Proposition \ref{prop:anycorner}.  This completes the proof as long as the newly constructed smooth boundary components of $\partial S'$ do not intersect any of the other smooth boundary components of $\partial S$. By minimality of the angle, there are no smooth boundary components of $\partial S$ lying between the two modified smooth boundary components. And, since $w$ is $O(\varepsilon^2)$, we can argue as in the proof of $(1)$ to complete the proof.

\end{proof}

\section{\texorpdfstring{The isoperimetric profile of $Q_a$}{The isoperimetric profile of Qa}}\label{sec:allconnected}

For each $a\in [0,1)$, we let $Q_a := [0,1]^2 \setminus [0,a)^2$ be the unit square with a square of side length $a$ removed from the bottom left corner.  The goal of this section is to determine the isoperimetric profile for $Q_a$.  We begin by introducing four regions, each parameterized by the area $t$, which we label $S_1$, $S_2$, $S_3$, and $S_4$, see Figure \ref{fig:main}.

In Section \ref{sec:fat}, we introduce a technical lemma, Lemma \ref{lem:technical}, which will allow us to precisely define a function $f_a:[0,\tfrac{1}{2}|Q_a|]\rightarrow \mathbb{R}$, which we will eventually prove is the isoperimetric profile for $Q_a$.  In Section \ref{sec:Sconnected}, we use Lemma \ref{lem:technical} to prove that an isoperimetric minimizer must be connected and must have precisely one smooth boundary component. In Section \ref{sec:faiso}, we use Lemma~\ref{lem:technical} to show that $f_a$ is the relative isoperimetric profile for connected regions with precisely one smooth boundary component.  The proof of Lemma \ref{lem:technical} is postponed to Section~\ref{sec:IV}.

\begin{center}
    \begin{tikzpicture}[scale=4]
        \draw[ultra thick] (0.3,0) -- (1,0) -- (1,1) -- (0,1) -- (0,0.3) -- (0.3,0.3) -- cycle;
        \draw[thick,dashed] (1,0.2) -- (0.3,0.3);
        \draw[thick] (1,0.8) arc (90:160:0.75);
        \draw (1,0.85) -- (0.95,0.85) -- (0.95,0.8);
        \draw (1,0.28) arc (90:160:0.1);
        \draw (0.96,0.25) node[anchor= south east] {$\theta$};
        \draw (0.29,0.15) node[anchor=east] {$a$};
        \draw (0.65,0) node[anchor=north] {$1-a$};
        \draw (0.55,0.7) node[anchor=south east] {$S_4$};
    \end{tikzpicture}
    \captionof{figure}{One minimizing region}\label{fig:L}
\end{center}

We define four regions $S_1,S_2,S_3,$ and $S_4$ as follows.   The region $S_1$ is any region bounded by a quarter circle ending on sides of $Q_a$ orthogonally, except that we do not allow a quarter circle surrounding the non-convex corner.  The region $S_2$ is bounded by a line segment of length $1$.  The region $S_3$ is bounded by a line segment of length $1-a$.  And finally, the region $S_4$ is bounded by a curve which touches the nonconvex corner, intersects a wall of length 1 perpendicularly, and does not make an acute angle with either side of length $a$ at the nonconvex corner. For fixed $a$, such a region is characterized by a single angle $\theta$ and a choice of the wall of length $1$ it intersects, see Figure~\ref{fig:L}.  For concreteness in proofs, we will always assume an $S_4$ region touches the wall on the right, i.e., for fixed $a$, the notation $S_4(\theta)$ unambiguously describes a subset of $Q_a$.  We will eventually show that any minimizer is a region $S_i$ with $i\in \{1,2,3,4\}$.

\subsection{The isoperimetric profile}\label{sec:fat}

In this section, we begin with a technical lemma whose proof will be postponed to Section \ref{sec:IV}.

\begin{lemma}\label{lem:technical}  All the following hold:

\begin{enumerate}\item  There is a number $\theta_{max}\in (0,\tfrac{\pi}{2})$ with the property that for any $a\in [0,1)$ and for any $t\in \left[a(1-a), \frac{1}{2}(1-a^2)\right]$ there is a unique $\theta = \theta(t)\in [0,\theta_{max}]$ for which $|S_4(\theta)| = t$.

\item  For each fixed $a\in[0,1)$,  expressing $\theta$ in terms of $t$, we have $$\frac{d}{dt} P(S_4) = \frac{\sin \theta(t)}{1-a} > 0 \quad \text{and}\quad \frac{d^2}{dt^2}P(S_4) > 0$$ for all $t\in \left[a(1-a), \frac{1}{2}(1-a^2)\right].$

\item  For each fixed $a\in [0,1)$, $P(S_4) < \sqrt{2}(1-a)$ for all $t\in\left[a(1-a), \frac{1}{2}(1-a^2)\right]$.

\item  There is a value $t_0\approx 0.48$ which has the following property:  for any $t\in [0,t_0]$, there is a unique pair $(a(t),\theta(t))$ for which $|S_4| = t$ and $P(S_4) = 1$, but for any $t > t_0$ no such solution exists.  Moreover, $a(t)$ is a strictly increasing function of $t$.

\item  There is a number $\beta \approx 0.23$ for which $P(S_4) < 1$ for all $a \in (\beta,1)$ and all $t\in \left[a(1-a), \frac{1}{2}(1-a^2)\right]$.

\item  There are numbers $\alpha \approx 0.10$ and $\gamma = \frac{1}{1+\pi}$ with the property that for any $a\in [\alpha,\gamma]$, there is a unique area $t= \sigma(a)$ for which $P(S_4) = \sqrt{\pi |S_4|}$.

\item  For $a\in [\alpha,\gamma]$, $\sigma$ is a strictly decreasing function.  In addition, $\sigma(\alpha) = \frac{1}{\pi}$ and $\sigma(\gamma) = \frac{1}{\pi}(1-\gamma)^2$.  
\end{enumerate}

\end{lemma}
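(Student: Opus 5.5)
The plan is to parametrize $S_4$ explicitly, derive closed formulas for $|S_4|$ and $P(S_4)$ in terms of $(a,\theta)$, and then reduce every item to a one-variable monotonicity or sign statement in $\theta$. I take $\theta$ to be the angle at the right wall $x=1$ between the horizontal and the chord joining the non-convex corner $(a,a)$ to the point where the arc meets the wall, as in Figure~\ref{fig:L}.

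\textbf{Parametrization.} The arc meets $x=1$ orthogonally, so it is horizontal there. By the tangent--chord angle theorem it therefore subtends a central angle $2\theta$. The chord has length $(1-a)\sec\theta$, so the radius is $r=(1-a)/\sin 2\theta$. This gives
$$P(S_4)=2\theta r=(1-a)\,\frac{2\theta}{\sin 2\theta}.$$
The area is the rectangle $[a,1]\times[0,a]$, plus the right triangle under the chord, plus the circular segment:
$$|S_4| = a(1-a) + \tfrac12(1-a)^2\tan\theta + \tfrac12 r^2(2\theta-\sin2\theta)=a(1-a)+(1-a)^2 A(\theta),$$
where $A$ does not depend on $a$. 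At $\theta=0$ we recover the segment $y=a$ and $t=a(1-a)$. The tangent at the corner makes angle $2\theta$ with the horizontal, so the non-acuteness condition from Theorem~\ref{thm:corner}(2) caps $\theta$. The largest relevant $\theta$ is where $a(1-a)+(1-a)^2A(\theta)=\tfrac12(1-a^2)$, i.e.\ $A(\theta)=\tfrac12$, which is independent of $a$; this defines $\theta_{max}$.

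\textbf{Items (1)--(3).} Item (1) then follows once I check that $A'(\theta)>0$ on $[0,\theta_{max}]$, which is a direct differentiation. For (2), I compute $dP/dt = \dfrac{dP/d\theta}{dt/d\theta}$. Both numerator and denominator are explicit trigonometric expressions, and I expect the ratio to simplify to $\sin\theta/(1-a)$ (this is the curvature-type first-variation identity with the endpoint pinned at the corner). Since $\theta(t)$ is increasing and $\sin$ is increasing on $(0,\pi/2)$, the second derivative is positive. For (3), convexity in $t$ means it suffices to check the endpoints. At $t=a(1-a)$ we have $P=1-a$. At $\theta_{max}$, the claim reduces to $2\theta_{max}/\sin2\theta_{max}<\sqrt2$, a single numerical check.

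\textbf{Items (4)--(7).} All of these reduce to the functions $g(\theta)=2\theta/\sin2\theta$ and $A(\theta)$ after scaling:
$$P=(1-a)\,g(\theta),\qquad t=a(1-a)+(1-a)^2A(\theta).$$
For (4), setting $P=1$ gives $1-a=1/g(\theta)$, so $t$ becomes an explicit function of $\theta$ alone. I will show it is increasing in $\theta$ up to the value where the constraint $A\le\tfrac12$ binds, and that this endpoint gives $t_0\approx0.48$. Then $a(t)=1-1/g(\theta(t))$ is increasing because $g$ is. For (5), $P<1$ for all admissible $\theta$ iff $(1-a)g(\theta_{max})<1$, which gives $\beta=1-1/g(\theta_{max})\approx0.23$. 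For (6) and (7), the equation $P^2=\pi t$ becomes
$$(1-a)g(\theta)^2=\pi\bigl(a+(1-a)A(\theta)\bigr).$$
For each fixed $\theta$ this is linear in $a$, so I solve it for $a=a(\theta)$. I then show $a(\theta)$ is monotone via an implicit-function argument. The endpoints are identified as follows. At $\theta=0$ we get $a=1/(1+\pi)=\gamma$ and $t=\gamma(1-\gamma)=(1-\gamma)^2/\pi$. At $\theta_{max}$ we get $a=\alpha$ and $t=\tfrac1\pi$. Monotonicity of $\sigma$ then follows by composing monotone maps.

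\textbf{Main obstacle.} The hardest step will be the monotonicity claims in (4) and (6)--(7). These mix transcendental functions such as $\theta/\sin2\theta$ and $\theta\csc^22\theta$, and the sign of the relevant derivative is not obvious. I would first try to rewrite each derivative as a positive factor times an expression like $\tan x - x$ or $2x-\sin2x$, whose sign is classical. Failing that, I would fall back on rigorous interval bounds on the compact range $[0,\theta_{max}]$.
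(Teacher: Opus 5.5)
\textbf{There is a genuine error in your treatment of (6)--(7).}

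At $\theta_{max}$ you have $A=\tfrac12$, so $t=\tfrac12(1-a^2)$. Your linear equation then gives $a=\frac{g(\theta_{max})^2-\pi/2}{g(\theta_{max})^2+\pi/2}\approx 0.03$ and $t\approx 0.50$. It cannot give $t=\tfrac1\pi$: on your curve $a\le\gamma$, so at $\theta_{max}$ we have $t\ge\tfrac12(1-\gamma^2)\approx 0.47>\tfrac1\pi$. Thus ``at $\theta_{max}$ we get $a=\alpha$ and $t=\tfrac1\pi$'' is false. Since that is your only handle on $\alpha$, you would end up with $\alpha\approx 0.03$, for which $\sigma(\alpha)\neq\tfrac1\pi$.

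The missing idea is to tie $\alpha$ to (4), as the paper does. Set $\alpha=a(\tfrac1\pi)$. It is attained at an interior angle $\theta_\alpha<\theta_{max}$ with $P=1$ and $t=\tfrac1\pi$. Then $P^2=\pi t$ at $(\alpha,\theta_\alpha)$, so this point lies on your curve.
\begin{itemize}
\item Strict monotonicity of $a(\theta)$ then gives $\sigma(\alpha)=\tfrac1\pi$.
\item The intermediate value theorem on $[0,\theta_\alpha]$ gives existence for every $a\in[\alpha,\gamma]$.
\end{itemize}
This also removes the need for the paper's numerical check that $j(\theta_{max})<0$ for $a\ge\alpha$. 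That check is exactly the inequality $a(\theta_{max})\approx 0.03<\alpha$.

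\textbf{Comparison of routes for (6)--(7).} With that repair, your route is genuinely different from the paper's, and somewhat better. The paper fixes $a$, applies the IVT to $j(\theta)=(P^2-\pi|S_4|)/(1-a)$, and obtains $d\sigma/da$ by implicit differentiation. You instead solve explicitly for $a(\theta)=h/(h+\pi)$ with $h=g^2-\pi A$.

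Monotonicity of $a(\theta)$ is the sign of $h'$. In the central-angle variable, $h'=-(\pi-2\theta)(1-\theta\cot\theta)\csc^2\theta<0$, which is the paper's $\partial j/\partial\theta$ divided by $1-a$.

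For $\sigma$, ``composing monotone maps'' works only if you write
$\sigma(a)=P^2/\pi=(1-a)^2g(\theta(a))^2/\pi$,
a product of two positive decreasing functions of $a$. Composing $\theta(a)$ with the area formula does not work directly. The reason is that $t$ also depends on $a$ explicitly, and at fixed small $\theta$ it increases with $a$.

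The paper's displayed chain rule $\frac{dt}{d\theta}\cdot\frac{d\theta}{da}$ in fact drops this $\partial t/\partial a$ term. At $a=\gamma$ the true value of $d\sigma/da$ is $-2/(1+\pi)$, whereas the paper's formula gives $-1$. The sign conclusion is still correct, and your product formula avoids the issue entirely.

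\textbf{A smaller point.} Your $\theta$ is half of the paper's. The paper's $\theta$ is the central angle of the arc, with $P=(1-a)\theta/\sin\theta$. In your variable, $dP/dt=1/r=\sin 2\theta/(1-a)$ rather than $\sin\theta/(1-a)$. To prove (2) as stated, you should work with the central angle.

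Apart from this, (1)--(5) follow the paper's argument. Your numerical check in (3) replaces the paper's identity $\theta_{max}/\sin\theta_{max}=\sin\theta_{max}+\cos\theta_{max}$, which is harmless.
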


The precise definitions of $\alpha$ and $\beta$ are that $\alpha = a(\tfrac{1}{\pi})$ and $\beta = a(t_0)$ in the context of Lemma \ref{lem:technical}(4).  The function $\tau = \tau(a)$ is defined to be the inverse of the function $t\mapsto a(t)$, from Lemma \ref{lem:technical}(4).

In terms of these parameters, recalling that $\frac{1}{2}|Q_a| = \frac{1}{2}(1-a^2)$, we define the following continuous piecewise-smooth function $f_a:\left[0, \frac{1}{2}(1-a^2)\right]\rightarrow \mathbb{R}$.  Lemma \ref{lem:technical}(4) and (6) indicate that it is well-defined.

\begin{equation}\label{eq:profile}
       f_a(t)= \begin{cases}
            0 \leq a \leq \alpha & \begin{cases}
                (\pi t)^{1/2} & \hspace{4.9em}0\leq t \leq \frac{1}{\pi} \\
                1 & \hspace{4.6em}\frac{1}{\pi} \leq t \leq \frac{1}{2}(1-a^2)
            \end{cases} \\[2em]
            \alpha \leq a \leq \beta & \begin{cases}
                (\pi t)^{1/2} & \hspace{4.7em}0\leq t \leq \sigma \\
                P(S_4) & \hspace{4.6em}\sigma \leq t \leq \tau \\
                1 & \hspace{4.65em}\tau \leq t \leq \frac{1}{2}(1-a^2)
            \end{cases} \\[2em]
            \beta \leq a \leq \gamma & \begin{cases}
                (\pi t)^{1/2} & \hspace{4.7em}0\leq t \leq \sigma \\
                P(S_4) & \hspace{4.6em}\sigma \leq t \leq \frac{1}{2}(1-a^2) \\
            \end{cases} \\[2em]
            \gamma \leq a < 1 & \begin{cases}
                (\pi t)^{1/2} & \hspace{4.7em} 0\leq t \leq \frac{1}{\pi}(1-a)^2 \\
                1-a & \hspace{1em}\frac{1}{\pi}(1-a)^2 \leq t \leq a(1-a) \\
                P(S_4) & \hspace{1.7em}a(1-a) \leq t \leq \frac{1}{2}(1-a^2)
            \end{cases}
        \end{cases}
\end{equation}

The formulas appearing in $f_a$ are of the form $P(S_i)$ for some $i\in \{1,2,3,4\}$.  In particular, $f_a$ is an upper bound for the isoperimetric profile of $Q_a$.

The rest of Section \ref{sec:allconnected} is focused on proving the following result.

\begin{theorem}\label{thm:iso} For each $a\in [0,1)$, $f_a$ is the isoperimetric profile for $Q_a$.

\end{theorem}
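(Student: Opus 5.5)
We will prove Theorem~\ref{thm:iso} in the two stages already announced at the start of this section. Since $f_a$ is built from values $P(S_i)$, it is an upper bound for the profile, so only the lower bound needs proof. Fix a minimizer $S$ with $|S|=t\le \frac12(1-a^2)$; one exists by compactness of sets of finite perimeter in the bounded domain $Q_a$. We must show $P(S)\ge f_a(t)$.

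\textbf{Stage 1: reduce to connected minimizers with one smooth boundary curve.} By Theorem~\ref{thm:structure} and Proposition~\ref{prop:finiteboundary}, $\partial S\cap\text{int}(Q_a)$ consists of finitely many segments, or finitely many arcs of a common radius $r$. Theorem~\ref{thm:corner} says no such curve meets a convex corner, and at most one meets the non-convex corner $(a,a)$, without acute angles there. Every other curve is therefore a closed circle, or a segment or arc meeting walls orthogonally. So each curve is one of the following:
\begin{itemize}
\item a full circle;
\item a horizontal or vertical segment of length $1$ or $1-a$;
\item a quarter circle centered at a convex corner;
\item a half circle centered on a wall;
\item a circle centered at $(a,a)$ cutting off a $3/4$-disc around the reflex corner;
\item the single curve through $(a,a)$, which is of $S_4$ type.
\end{itemize}
If $S$ had two or more components, we would compare the sum of perimeters with a single region of total area $t$. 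Translating, merging, or scaling components (the standard argument of \cite{BB}) gives a strictly better competitor. The key inequality is concavity and subadditivity of $f_a$, namely $f_a(t_1)+f_a(t_2)> f_a(t_1+t_2)$. This follows from Lemma~\ref{lem:technical}: part (2) gives convexity in the right sense and monotonicity of $P(S_4)$, and parts (3) and (5) bound $P(S_4)$ above by $\sqrt2(1-a)$ and by $1$. The same argument shows $\partial S$ has exactly one smooth curve. Full circles and half circles are beaten by quarter circles of the same area, and the $3/4$-disc around the reflex corner is beaten by a quarter disc.

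\textbf{Stage 2: the profile among single-curve regions.} With one curve, $S$ is one of the following: a quarter disc ($P=\sqrt{\pi t}$), a strip of width bounded by a length-$1$ segment ($P=1$), a strip bounded by a length-$(1-a)$ segment ($P=1-a$, available only for $t\le a(1-a)$), or an $S_4$ region (available only for $t\in[a(1-a),\frac12(1-a^2)]$ by Lemma~\ref{lem:technical}(1)). We then show that $f_a(t)$ is the pointwise minimum of these candidate functions, case by case in $a$ as in \eqref{eq:profile}:
\begin{itemize}
\item The quarter disc versus the constant $1$ crosses at $t=1/\pi$.
\item The quarter disc versus $1-a$ crosses at $(1-a)^2/\pi$. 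This is relevant exactly when $(1-a)^2/\pi\le a(1-a)$, i.e.\ $a\ge\gamma$.
\item $P(S_4)$ versus $1$ is governed by Lemma~\ref{lem:technical}(4),(5): the crossing is at $\tau(a)$ when $a\le\beta$, and $P(S_4)<1$ throughout when $a>\beta$.
\item $P(S_4)$ versus $\sqrt{\pi t}$ is governed by parts (6),(7): the crossing is at $\sigma(a)$ for $a\in[\alpha,\gamma]$. For $a<\alpha$, $S_4$ never beats $\min(\sqrt{\pi t},1)$, using the monotonicity of $a(t)$ and $\sigma(\alpha)=1/\pi$.
\item For $a\ge\gamma$, $P(S_4)\ge 1-a$ at $t=a(1-a)$ by continuity of the profile, and $P(S_4)$ is increasing beyond that point.
\end{itemize}
Convexity of $P(S_4)$ in $t$ together with concavity of $\sqrt{\pi t}$ guarantees each crossing is unique, so the piecewise description follows.

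The main obstacle is Stage 1. Ruling out disconnected minimizers requires the subadditivity of $f_a$, which must be checked at every phase boundary. One also has to handle configurations where curves are forced, for example two strips, or a quarter disc plus an $S_4$ piece. I would first prove the auxiliary statement that $f_a(t)/t$ is nonincreasing, which yields strict subadditivity directly. This should follow from concavity of $\sqrt{\pi t}$, of the constants, and from the explicit derivative $\frac{d}{dt}P(S_4)=\sin\theta/(1-a)$ compared against $P(S_4)/t$.
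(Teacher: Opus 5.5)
Your Stage~1 rests on an auxiliary claim that is false. You want strict subadditivity of $f_a$ to follow from ``concavity'' of $f_a$ and from $f_a(t)/t$ being nonincreasing. Neither holds:
\begin{itemize}
\item Lemma~\ref{lem:technical}(2) makes $P(S_4)$ strictly convex in $t$. For $a\ge\gamma$, $f_a$ is constant on $[\frac1\pi(1-a)^2,a(1-a)]$ and then strictly increasing, so it is not concave.
\item On the $S_4$ branch, $P(S_4)-t\frac{d}{dt}P(S_4)=(1-a)\bigl(\frac{\theta}{2\sin\theta}+\frac{\cos\theta}{2}\bigr)-a\sin\theta$, which is negative for $a$ near $1$. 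At $a=0.9$, $f_a(t)/t$ rises from $1/a\approx1.11$ at $t=a(1-a)$ to about $1.36$ at $t=\frac12(1-a^2)$.
\end{itemize}
So subadditivity would need a different proof. Even then, you would still have to reduce configurations such as a strip between two segments, or a region inside one arc and outside another, to disjoint one-curve pieces (for example by passing to complements). The appeal to ``translating, merging, or scaling'' does not address them.

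The paper bypasses subadditivity of $f_a$ entirely with direct bounds:
\begin{itemize}
\item Two or more segments give $P(S)\ge2(1-a)>\max f_a$.
\item If $S$ is inside one arc and outside another, shrinking both at compensating rates lowers the perimeter at fixed area.
\item Otherwise $S$ lies inside every arc, and each non-$S_4$ arc has length at least $\sqrt{\pi t_i}$. Hence $P(S)\ge\sum_i\sqrt{\pi t_i}>\sqrt{\pi t}\ge f_a(t)$ by Lemma~\ref{lem:ffacts}.
\item An $S_4$ arc plus any other arc has length at least $(1-a)+\frac{\pi}{2}(1-a)>\sqrt2(1-a)$, which exceeds $\max f_a$.
\end{itemize}

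Your list of admissible curves is also incomplete, and this affects both stages. Theorem~\ref{thm:corner} only excludes acute angles at $(a,a)$; it does not force the curve through the reflex corner to be an $S_4$ arc. That curve may end orthogonally on a side of length $1-a$. This gives an arc strictly between a quarter and a half circle that bounds neither an $S_1$ nor an $S_4$ region. This is exactly the case the paper handles with Lemma~\ref{lem:secondcase} (length at least $\sqrt{\pi t}$) and with the reflection argument in Proposition~\ref{prop:ruleoutlots}.

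You also omit the quarter circle centered at $(0,0)$ enclosing the reflex corner; $(0,0)$ is not a corner of $Q_a$. Its perimeter is $\sqrt{\pi(t+a^2)}$, not $\sqrt{\pi t}$, and the paper disposes of it separately. When $a\le1-\frac{\sqrt2}{\pi}$, an $S_1$ region of equal area exists; otherwise $P\ge\sqrt\pi\,a>\sqrt2(1-a)$.

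Once these cases are included and Stage~1 is replaced by the direct bounds above, your Stage~2 comparison of $S_1,\dots,S_4$ is essentially the paper's Proposition~\ref{prop:fsmallt} together with its final case analysis.
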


We conclude this subsection with a lemma providing a useful upper bound for the function $f_a$. To state it, we let $T = T(a)$ denote the $t$-value at which $f_a$ transitions from $(\pi t)^{1/2}$ to a different formula.  Thus,
\[
    T(a) =
    \begin{cases}
        \frac{1}{\pi} & 0\leq a \leq \alpha \\
        \sigma (a) & \alpha \leq a \leq \gamma \\
        \frac{1}{\pi}(1-a)^2 & \gamma \leq a < 1
    \end{cases}.
\]
We also note that an $S_1$ region has area bounded by $\min\left\{\frac{\pi}{4}, \frac{\pi}{2}(1-a)^2\right\}$.  This follows because the largest such region occurs when it has $(1,1)$ as its center and the radius is bounded above by $\min\{1, \sqrt{2}(1-a)\}$ to avoid hitting both the walls of side length $1-a$ and the non-convex corner at $(a,a)$.

\begin{lemma}\label{lem:ffacts}For each $a\in [0,1)$ and any $t\in \left[0,\min\left\{\frac{\pi}{4}, \frac{\pi}{2}(1-a)^2\right\}\right]$ we have $$(\pi t)^{1/2} \geq f_a(t).$$  In addition, this inequality is strict for any $t> T$.

\end{lemma}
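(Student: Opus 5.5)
The plan is to split according to the four ranges of $a$ in the definition \eqref{eq:profile} and compare $(\pi t)^{1/2}$ with each formula on its interval. On $[0,T]$ we have $f_a(t)=(\pi t)^{1/2}$, so equality holds there. It remains to show that for $t\in (T, \min\{\frac{\pi}{4},\frac{\pi}{2}(1-a)^2\}]$ each of the other pieces $1$, $1-a$, $P(S_4)$ is strictly smaller than $(\pi t)^{1/2}$. Note that $f_a$ is only defined up to $\frac12(1-a^2)$, so we restrict to $t$ in the intersection of the two ranges.

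\textbf{The constant pieces.} For the piece equal to $1$, used when $t\geq \frac{1}{\pi}$ for $a\le\alpha$ and when $t\ge \tau$ for $\alpha\le a\le\beta$, we must check $t>\frac1\pi$ wherever it is used beyond $T$. For $a\le\alpha$ this is immediate. For $\alpha\le a\le\beta$ we need $\tau(a)\ge \frac1\pi$; this follows from the monotonicity in Lemma \ref{lem:technical}(4), since $\alpha=a(\frac1\pi)$ and $a(t)$ is increasing. For the piece $1-a$ with $a\ge\gamma$, we have $t>\frac1\pi(1-a)^2$, which is exactly $(\pi t)^{1/2}>1-a$.

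\textbf{The $P(S_4)$ piece, $\alpha\le a\le\gamma$.} On $[\sigma,\cdot]$ I consider $h(t)=P(S_4)-(\pi t)^{1/2}$. We have $h(\sigma)=0$ by Lemma \ref{lem:technical}(6), and $h$ is convex by Lemma \ref{lem:technical}(2), since $-(\pi t)^{1/2}$ is also convex. A convex function vanishing at $\sigma$ is negative just to the right of $\sigma$ and stays negative up to its next zero. Uniqueness in (6) says $\sigma$ is the only zero in $[a(1-a),\frac12(1-a^2)]$, so there is no next zero. To place $h$ on the negative side, I will show $h$ is positive somewhere left of $\sigma$, or nonpositive at the right endpoint. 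The cleanest route is to compare at $t=a(1-a)$, where $\theta=0$ and the $S_4$ boundary is a circle of radius $1-a$ through the corner. There $P(S_4)\ge 1-a$ but the area is $a(1-a)$; I expect to need a direct inequality there, or else to use $f_a\le 1$ together with (4) for $a\le \beta$.

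Alternatively, convexity plus $h(\sigma)=0$ plus $h(\text{right end})<0$ gives $h<0$ on $(\sigma,\text{end}]$. For the right end I use $P(S_4)<\min(1,\sqrt2(1-a))$ from (3),(5). I also use that at $t=\frac\pi4$ or $t=\frac\pi2(1-a)^2$ we get $(\pi t)^{1/2}=\frac{\pi}{2}$ or $\frac{\pi}{\sqrt2}(1-a)$, both exceeding these bounds. So $h<0$ at the endpoint of the relevant interval.

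\textbf{The $P(S_4)$ piece, $a\ge\gamma$.} Here $t\ge a(1-a)$ and $P(S_4)<\sqrt2(1-a)$ by (3). It suffices that $(\pi t)^{1/2}\ge(\pi a(1-a))^{1/2}\ge\sqrt2(1-a)$, i.e. $\pi a\ge 2(1-a)$, i.e. $a\ge \frac{2}{2+\pi}$. Since $\gamma=\frac1{1+\pi}<\frac2{2+\pi}$, this fails near $\gamma$. In that range I would again use convexity: compare at $t=a(1-a)$, where the value of $f$ from the left is $1-a<(\pi t)^{1/2}$, and at the right endpoint via (3). Convexity of $h$ then gives $h<0$ throughout.

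I expect this endpoint bookkeeping to be the main obstacle: verifying that $h<0$ at both ends of each interval where $P(S_4)$ is used.
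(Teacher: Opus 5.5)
Your strategy is the paper's: convexity of $P(S_4)-\sqrt{\pi t}$ (the paper phrases it as concavity of $\sqrt{\pi t}-f_a$) reduces everything to sign checks at the two ends of the relevant interval. Your piece-by-piece split is actually more careful than the paper. The paper claims $\sqrt{\pi t}-f_a$ is concave on all of $[T,\frac{1}{2}(1-a^2)]$, but this fails at $\tau$ when $\alpha<a<\beta$, because there $f_a=\min\{P(S_4),1\}$ has a concave corner. Your observation $\tau(a)>\frac{1}{\pi}$ is exactly what handles that point.

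The gap is in your far-end check for the $P(S_4)$ piece. For every $a<\frac{\pi-1}{\pi+1}\approx 0.52$, in particular for all $a\in[\alpha,\gamma]$, the right end of the admissible range is $\frac{1}{2}(1-a^2)$. This is strictly smaller than both $\frac{\pi}{4}$ and $\frac{\pi}{2}(1-a)^2$. Since $\sqrt{\pi t}$ is increasing, the values $\frac{\pi}{2}$ and $\frac{\pi}{\sqrt2}(1-a)$ you compute say nothing at $t=\frac{1}{2}(1-a^2)$. Your evaluation points also lie outside the range where $f_a$ and the bounds (3), (5) are available.

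Moreover, (5) gives $P(S_4)<1$ only for $a>\beta$. For $\alpha\le a<\beta$ one has $P(S_4)=\frac{1-a}{1-\beta}>1$ at $t=\frac{1}{2}(1-a^2)$. The fallback (3) is too weak there, since $\sqrt2(1-a)>\sqrt{\frac{\pi}{2}(1-a^2)}$ whenever $a<\frac{4-\pi}{4+\pi}\approx 0.12$, and this includes $a$ near $\alpha\approx 0.10$.

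To repair it, split by the range of $a$:
\begin{itemize}
\item For $\alpha<a<\beta$, end the $P(S_4)$ piece at $\tau$. There $h(\tau)=1-\sqrt{\pi\tau}<0$ because $\tau>\frac{1}{\pi}$.
\item For $\beta\le a\le\gamma$, evaluate at $t=\frac{1}{2}(1-a^2)$. There $P(S_4)\le 1<\sqrt{\frac{\pi}{2}(1-\gamma^2)}\le\sqrt{\frac{\pi}{2}(1-a^2)}$.
\item For $a\ge\gamma$, evaluate at $\min\{\frac{1}{2}(1-a^2),\frac{\pi}{2}(1-a)^2\}$. There (3) does suffice, because $\gamma>\frac{4-\pi}{4+\pi}$.
\end{itemize}
This is the check the paper performs at its point $t_1$.

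You should also drop your first route. The sentence ``a convex function vanishing at $\sigma$ is negative just to the right of $\sigma$'' is false. Positivity to the left of $\sigma$ plus uniqueness of the zero does not force negativity to the right, since a convex function can touch zero tangentially (think of $(t-\sigma)^2$).

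If you want to avoid the endpoint bookkeeping altogether, use the computation in the proof of Lemma~\ref{lem:technical}(6). It gives $\frac{dj}{d\theta}=(1-a)(\pi-2\theta)(\theta\cot\theta-1)\csc^2\theta<0$ on $(0,\theta_{max}]$. Hence $P(S_4)^2-\pi|S_4|$ is strictly decreasing in $t$. It is therefore negative for all $t>\sigma$ when $a\in[\alpha,\gamma]$. It is also negative for all $t>a(1-a)$ when $a\ge\gamma$, since $j(0)=1-(1+\pi)a\le 0$.
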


\begin{proof} 
We first observe that the function $(\pi t)^{1/2}$ has a negative second derivative on its domain.  On the other hand, the constant functions $1$ and $1-a$ and the function $P(S_4)$ have non-negative second derivatives with respect to $t$ on their domains, as follows from Lemma \ref{lem:technical}(2).

Set $h(t):=(\pi t)^{1/2} - f_a(t)$.  To complete the proof it is sufficient to show $h(t)\geq 0$ for $t\in \left[T,\frac{1}{2}(1-a^2)\right]$ with equality if and only if $t = T$.  Observe that $h$ is strictly concave. Thus, $\{t:h(t)\geq 0\}$ is an interval and the zeros of $h$ can only occur at the endpoints.

By definition of $T$, $h(T) = 0$.  Thus, we need only show $h(t_1) > 0$, where $t_1$ is the $t$-value corresponding to the largest possible area of $S_1$.  As mentioned above, $t\leq \min\left\{\frac{\pi}{4}, \frac{\pi}{2}(1-a)^2\right\}$. However, the area is also constrained by the domain of $f_a$, that is $\left[0,\frac{1}{2}(1-a^2)\right]$.  Since $\frac{1}{2}(1-a^2) < \pi/4$ for all $a\in[0,1)$, we find that $t_1 = \min \{ \frac{1}{2}(1-a^2), \frac{\pi}{2}(1-a)^2\}$, and hence $t_1 = \frac{1}{2}(1-a^2)$ when $a\leq \frac{\pi-1}{\pi+1}\approx 0.52$ and $t_1 = \frac{\pi}{2}(1-a)^2$ when $a\geq \frac{\pi-1}{\pi+1}$.   We consider two cases.

\textit{Case 1}: Assume $a\leq \frac{\pi-1}{\pi+1}$. If $a\leq \beta$, then $f_a(t_1)\leq 1$ and $\sqrt{\pi t_1} \geq \sqrt{\frac{\pi}{2}(1-\beta^2)}$. Thus, $$h(t_1) \geq \sqrt{\frac{\pi}{2}(1-\beta^2)} - 1 > 0.$$    On the other hand, if $a\in \left(\beta, \frac{\pi-1}{\pi+1}\right]$, then we must verify that $\sqrt{\pi t_1} - P(S_4)>0$.  By Lemma \ref{lem:technical}(3), we see $P(S_4) < \sqrt{2}(1-a)$.  In addition, it is easy to see that the minimum value of $h$ on $[\beta, \frac{\pi-1}{\pi+1}]$ occurs at $\beta$.  Therefore, $$h(t_1) > \sqrt{\frac{\pi}{2}(1-a^2)} - \sqrt{2}(1-a) \geq \sqrt{\frac{\pi}{2}(1-\beta^2)} - \sqrt{2}(1-\beta) > 0,$$ completing the case where $a\leq \frac{\pi-1}{\pi+1}$.

\textit{Case 2}: Assume $a> \frac{\pi-1}{\pi+1}$.  Then $$h(t_1) = \sqrt{\pi \frac{\pi}{2}(1-a)^2} - P(S_4) > \frac{\pi}{\sqrt{2}}(1-a) - \sqrt{2}(1-a) > 0,$$ completing this case.

\end{proof}

\subsection{\texorpdfstring{The region $S$ must be connected}{The region S must be connected}}\label{sec:Sconnected}

In this section, we show that for an isoperimetric minimizer $S\subseteq Q_a$, the boundary $\partial S$ consists of exactly one smooth boundary component.  In particular, this implies that any minimizer must be connected.

 Recall that from Theorem \ref{thm:structure}, all smooth boundary components must have the same curvature.  Hence, either all smooth boundary components are line segments, or they are all arcs of circles of the same radius.  We begin with the case where they are line segments.

\begin{proposition}  If $S\subseteq Q_a$ is an isoperimetric minimizer for which all smooth boundary components of $\partial S$ are line segments, then there is only one smooth boundary component.
\end{proposition}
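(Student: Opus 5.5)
We classify which line segments can occur as smooth boundary components of $\partial S$, and then show that having two or more of them always costs more perimeter than $f_a(t)$ permits. By Theorem~\ref{thm:structure}, each component is a segment in $\text{int}(Q_a)$. At a non-corner endpoint the segment meets $\partial Q_a$ orthogonally. By Theorem~\ref{thm:corner}(1) no endpoint is a convex corner. By Theorem~\ref{thm:corner}(2), at most one component ends at the non-convex corner $(a,a)$, and it makes no acute angle with either short edge there.

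\textbf{Step 1: list the admissible segments.} A segment with both endpoints on walls, meeting each orthogonally, must join two parallel walls along a common perpendicular. The walls of $Q_a$ are all horizontal or vertical, so every such segment is horizontal or vertical. It therefore has length $1$ (an $S_2$-type cut) or $1-a$ (an $S_3$-type cut). A segment from $(a,a)$ to a wall that avoids acute angles with both short edges must lie in the closed quadrant $\{x\ge a,\,y\ge a\}$ directions. Meeting the far wall orthogonally forces it to be horizontal or vertical, so it has length $1-a$. In every case, each component has length at least $1-a$, and it has length $1$ unless it is a cut across an arm of the L (or the corner segment).

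\textbf{Step 2: compare with $f_a$.} Suppose there are $k\ge 2$ components. Then $P(S)\ge 2(1-a)$. Since $f_a$ is an upper bound for the profile, minimality gives $P(S)\le f_a(t)$. We check $f_a(t)<2(1-a)$ in each regime of \eqref{eq:profile}:
\begin{itemize}
\item For $a\le\beta$, $f_a\le 1<2(1-a)$ as long as $a<1/2$, which holds since $\beta\approx0.23$.
\item For $a\ge\gamma$, Lemma~\ref{lem:technical}(3) and the displayed formula give $f_a\le\max\{1-a,\sqrt{2}(1-a)\}<2(1-a)$.
\item For $\beta\le a\le\gamma$, Lemma~\ref{lem:technical}(5) gives $f_a\le 1$, and $1<2(1-a)$ because $\gamma<1/2$.
\end{itemize}
This handles every $a$ for which $2(1-a)$ exceeds $f_a$. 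Since $f_a\le\sqrt2(1-a)$ whenever $a\ge\gamma$, the estimate $2(1-a)>\sqrt2(1-a)\ge f_a$ is strict for all $a$. Hence $k\ge2$ contradicts minimality, and $k=1$.

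\textbf{Main obstacle.} The delicate point is the small-area range $t<T(a)$, where $f_a=(\pi t)^{1/2}$. There the bound above still works because $(\pi t)^{1/2}\le f_a$ fails only in the wrong direction. We need $f_a(t)\le 1$ for $a<1/2$, which follows from Lemma~\ref{lem:ffacts} together with the pieces of \eqref{eq:profile}. The harder work is making Step 1 airtight. One must rule out a segment that ends at $(a,a)$ and is neither horizontal nor vertical while meeting a long wall orthogonally. Orthogonality at that wall forces the segment to be horizontal or vertical. The segment cannot end at a convex corner. It also cannot end at $(a,a)$ at both ends, since at most one component touches $(a,a)$. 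I would also check the degenerate case $a=0$, where the corner disappears and the argument reduces to that of \cite{BB}.
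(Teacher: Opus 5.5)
Your proof is correct and follows the paper's argument: two or more segment components force $P(S)\ge 2(1-a)$, and this exceeds $\max f_a$ in every regime. The paper uses Lemma~\ref{lem:technical}(3) for $a>\beta$ and $\max f_a=1$ for $a\le\beta$, while you split at $\gamma$ and also use Lemma~\ref{lem:technical}(5), which comes to the same thing. Your Step~1 classification supplies the justification for $P(S)\ge 2(1-a)$ that the paper only asserts. The ``main obstacle'' paragraph is unnecessary and somewhat garbled, since your three bullets already bound $f_a$ on its whole domain, including $t<T(a)$.
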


\begin{proof}  Suppose for a contradiction that $\partial S$ has at least two smooth line segment components.  Thus, $P(S) \geq 2(1-a)$. However, by Lemma \ref{lem:technical}(3), this is longer than $\max f_a$ when $a> \beta$, and longer than $1 = \max f_a$ when $a\leq \beta$.  Hence, $S$ is not an isoperimetric minimizer.
\end{proof}

We next move to the case where all smooth boundary components are arcs of a circle of radius $r$.  To that end, note that if such a smooth boundary component touches the nonconvex corner and it does not make an acute angle with either side, then one of three possibilities occurs:  it touches a side of length $a$ (so, must be a semicircle), it touches a side of length $1-a$ (so, it must be between quarter and half of a circle), or it touches a side of length 1 (so it bounds an $S_4$ region). The following lemma will be the key to handling the second case.

\begin{lemma}\label{lem:secondcase}
Suppose a circular segment of radius $r$ touches the non-convex corner in a non-acute angle and intersects a side of length $1-a$ perpendicularly.  Then $P\geq \sqrt{\pi t}$ where $P$ denotes the perimeter and $t$ denotes the area enclosed.
\end{lemma}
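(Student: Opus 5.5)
Set up coordinates so that the non-convex corner is $K=(a,a)$. The side of length $1-a$ in question is the horizontal edge from $(a,a)$ to $(1,a)$ or the vertical one; by symmetry take the vertical edge $x=a$, $y\in[a,1]$. This edge contains $K$, so the arc meets it at $K$ and at a second point. That point cannot be $K$ itself, so it is a point $B=(a,b)$ with $b>a$ where the arc is orthogonal to the line $x=a$. Orthogonality forces the center of the circle to lie on the line $x=a$. Hence both $K$ and $B$ lie on a circle of radius $r$ centered on the line $x=a$, so $K$ and $B$ are the two endpoints of a diameter, and the arc is a semicircle of radius $r=(b-a)/2$ centered at $(a,a+r)$.

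That seems to contradict the remark before the lemma that this case lies ``between quarter and half of a circle''. So I would redo the reading: the arc ends perpendicularly on the \emph{other} side of length $1-a$, not the one through $K$. Starting at $K$ on the edge $y=a$, it can only reach the far side of length $1-a$ (the edge $x=1$, $y\in[0,a]$, has length $a$). So take the arc from $K=(a,a)$ to a point $B=(1,b)$ on the side $x=1$, $0\le b\le a$, wait, that side has length $a$. Rechecking, the sides of length $1-a$ are $y=a,\ x\in[a,1]$ and $x=a,\ y\in[a,1]$, both containing $K$. So the intended case is: the arc leaves $K$ into the region $x<a$ or $y<a$, i.e.\ into the arms of the L, and ends perpendicularly on the portion of the line $x=a$ below $K$ (the edge $x=a$, $y\in[0,a]$, has length $a$), or it ends on the extension. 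I will instead treat it as: the center lies on the line containing the side, at distance $r$ from $K$, and the arc sweeps an angle $\phi\in[\pi/2,\pi]$ (the non-acute condition at $K$ gives $\phi\ge\pi/2$).

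The enclosed region is then bounded by the arc and by straight pieces of $\partial Q_a$ through $K$. I would compute it as a circular sector of angle $\phi$ about the center $O$, with the triangle or triangles between $O$, $K$ and the walls added or removed. Concretely, $P=r\phi$ and $t=\tfrac12 r^2\phi-\tfrac12 r^2\sin\phi\cos\phi$ or a similar sector-minus-triangle expression. Then $P^2\ge \pi t$ becomes a one-variable inequality in $\phi\in[\pi/2,\pi]$, for instance $\phi^2\ge \tfrac{\pi}{2}(\phi-\sin\phi\cos\phi)$. At $\phi=\pi/2$ this reads $\pi^2/4\ge\pi^2/4$, with equality, which is the quarter circle. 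At $\phi=\pi$ it reads $\pi^2\ge \pi^2/2$. In between, I would show the difference is increasing by differentiating: $2\phi-\tfrac{\pi}{2}(1-\cos 2\phi)\ge 2\phi-\pi\ge0$ for $\phi\ge\pi/2$.

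The main obstacle is getting the geometry right: identifying exactly which triangle is added or subtracted to the sector given where the wall lies relative to $O$, so that $t$ really is at most the sector-type expression. If the region contains extra area, I would bound $t$ above by the sector area $\tfrac12 r^2\phi$ plus the triangle $OKB'$, and check the inequality still holds via the same monotonicity argument.
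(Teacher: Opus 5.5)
Your calculus step is the same as the paper's. Once you know $P=r\phi$, $t=\tfrac12 r^2(\phi-\sin\phi\cos\phi)$ and $\phi\in[\pi/2,\pi]$, the inequality $\phi^2\ge\tfrac{\pi}{2}(\phi-\sin\phi\cos\phi)$ is an equality at $\pi/2$ and follows from $2\phi\ge\pi\ge\pi\sin^2\phi$. The gap is that those three facts are the only geometric content of the lemma, and you assert them rather than derive them, from a picture of $Q_a$ that is wrong.

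In $Q_a=[0,1]^2\setminus[0,a)^2$, the edges through $K=(a,a)$ are $\{y=a,\ 0\le x\le a\}$ and $\{x=a,\ 0\le y\le a\}$, both of length $a$. The segments $\{y=a,\ a\le x\le 1\}$ and $\{x=a,\ a\le y\le 1\}$ that you call sides of length $1-a$ lie in the interior of $Q_a$, and the side $x=1$ has length $1$, not $a$. The sides of length $1-a$ are $\{y=0,\ a\le x\le 1\}$ and $\{x=0,\ a\le y\le 1\}$, and neither contains $K$. Your first computation, which gives a semicircle, is really the paper's separate case of an arc ending on a side of length $a$. As written, you leave open whether the triangle is added or subtracted, and whether $t$ equals your expression or is only bounded by it. You also never show why the non-acute hypothesis gives $\phi\in[\pi/2,\pi]$.

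Here is what is needed. By the symmetry $x\leftrightarrow y$, let the arc end perpendicularly at $B$ on $\{y=0\}$. Then the center is $O=(c,0)$ with $r^2=(c-a)^2+a^2$. We have $B=(c+r,0)$, because the other point $(c-r,0)$ has $c-r<a$ and so is not on that side.

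Write $K=O+r(\cos\phi,\sin\phi)$ with $\phi\in(0,\pi)$. The tangent at $K$ pointing toward $B$ has direction angle $\phi-\pi/2$. So, inside $Q_a$, the arc makes angle $\phi$ with the edge $\{x=a,\ y\le a\}$; this is the angle of the enclosed region at $K$. It makes angle $\tfrac{3\pi}{2}-\phi$ with the edge $\{y=a,\ x\le a\}$. Hence the non-acute hypothesis is exactly $\phi\in[\pi/2,\pi]$, equivalently $c\ge a$.

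The enclosed region is the disk intersected with $\{x\ge a,\ y\ge 0\}$. This is the sector $OBK$, of area $\tfrac12 r^2\phi$, together with the right triangle with vertices $O$, $(a,0)$, $K$, of area $\tfrac12(c-a)a=-\tfrac12 r^2\sin\phi\cos\phi\ge 0$. So $t=\tfrac12 r^2(\phi-\sin\phi\cos\phi)$ exactly (sector \emph{plus} triangle, not an upper bound), and $P=r\phi$. With this in place, your argument coincides with the paper's.
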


\begin{proof}
Suppose $\theta\in [\tfrac{\pi}{2},\pi]$ is the angle made by the endpoints of the circular segment and the center.  Then the area $t$ is given by $t = \tfrac{1}{2} r^2\theta -\tfrac{1}{2} r^2\cos\theta \sin\theta$ and the perimeter is $P = r\theta$.

Our goal is to show that $r\theta \geq \sqrt{\pi t}$, or, equivalently, that $\theta^2 \geq \frac{1}{2} \pi \theta - \frac{1}{2}\pi \cos\theta\sin\theta$.  Observe that when $\theta = \tfrac{\pi}{2}$, the inequality becomes the equality $\frac{\pi^2}{4} = \frac{\pi^2}{4}$.  Thus, it is sufficient to show the derivative of the left side is at least as large as the derivative of the right side for all $\theta \in [\tfrac{\pi}{2},\pi]$. The left side has derivative $2\theta \geq \pi$ on $[\tfrac{\pi}{2},\pi]$. On the other hand, the right side has derivative $\frac{1}{2}\pi - \frac{1}{2}\pi (\cos^2\theta - \sin^2\theta) = \pi \sin^2\theta$, which is bounded above by $\pi$ on $[\tfrac{\pi}{2},\pi]$.

\end{proof}

We are ready handle the case where all boundary components are arcs of a circle.

\begin{proposition} If $S\subseteq Q_a$ is an isoperimetric minimizer for which all smooth boundary components of $\partial S$ are cicular arcs of some radius $r$, then there is only one smooth boundary component.
\end{proposition}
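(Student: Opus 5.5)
Suppose for contradiction that $S$ is a minimizer whose boundary has at least two circular-arc components, all of the same radius $r$. We may replace $S$ by $Q_a\setminus S$ when convenient, since the smooth boundary components are unchanged.

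The first step is to list the possible shapes of a single arc component $\Gamma$. By Theorem \ref{thm:structure}, $\Gamma$ meets edges orthogonally at non-corner points. By Theorem \ref{thm:corner}(1), it avoids the five convex corners. By Theorem \ref{thm:corner}(2), at most one component passes through the non-convex corner $(a,a)$, and that component is non-acute there. If $\Gamma$ avoids $(a,a)$, its two endpoints lie on edges it meets orthogonally, so its center lies on the extensions of both edges. Parallel edges are impossible for an arc, so the two edges are perpendicular and $\Gamma$ is a quarter circle centered at a corner of the square $[0,1]^2$. The other possibility is a quarter circle around $(a,a)$, which is excluded for $S_1$ regions but still has to be handled here. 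If $\Gamma$ passes through $(a,a)$, the remark preceding Lemma \ref{lem:secondcase} gives three options: a semicircle on a side of length $a$, an arc of the kind in Lemma \ref{lem:secondcase}, or the boundary of an $S_4$ region.

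The second step bounds each component from below. For each component $\Gamma_i$, let $P_i$ be its length and $t_i$ the area of the piece it cuts off on its concave side. I claim $P_i\ge \sqrt{\pi t_i}$ in every case.
\begin{itemize}
\item For a quarter circle this is equality.
\item For a semicircle, $P_i=\pi r$ and $t_i=\pi r^2/2$, so $P_i=\sqrt{2\pi t_i}>\sqrt{\pi t_i}$.
\item For the arc in Lemma \ref{lem:secondcase}, the lemma gives exactly this bound.
\item For a quarter circle around $(a,a)$, the region it bounds inside $Q_a$ is three quarters of a disk. It then satisfies $P_i=\tfrac{\pi}{2}r$ and $t_i=\tfrac34\pi r^2$, so again $P_i\ge\sqrt{\pi t_i}$.
\item For an $S_4$ arc I would use $P(S_4)\ge f_a(t)$ together with Lemma \ref{lem:ffacts}, or argue directly from the pieces of the profile.
\end{itemize}
Since all components share the radius $r$ and the same concavity relative to $S$, the pieces are either all inside $S$ or all inside the complement. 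After complementing if necessary, $S$ is a disjoint union of these pieces, so $|S|=\sum t_i$ and $P(S)=\sum P_i$.

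The third step is the concavity argument. With $k\ge 2$ pieces,
\[
P(S)\ge \sum_i \sqrt{\pi t_i} > \sqrt{\pi \textstyle\sum_i t_i}=\sqrt{\pi |S|},
\]
and the middle inequality is strict by strict concavity of the square root. If $|S|$ lies in the range where Lemma \ref{lem:ffacts} applies, then $\sqrt{\pi|S|}\ge f_a(|S|)$, so $P(S)>f_a(|S|)$. Since $f_a$ is an upper bound for the profile, this is a contradiction. Alternatively, one can merge two quarter circles into a single quarter circle of area $t_1+t_2$ and strictly smaller perimeter, as long as it fits.

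I expect the main obstacle to be the range and fitting issues. First, $|S|$ may exceed $\min\{\tfrac{\pi}{4},\tfrac{\pi}{2}(1-a)^2\}$, in which case Lemma \ref{lem:ffacts} does not apply directly. Second, an $S_4$ component does not satisfy $P\ge\sqrt{\pi t}$ in general. To handle these, I would first try the crude bound $P(S)\ge 2\min_i P_i$ and compare it with $\max f_a$, which is $1$ or at most $\sqrt2(1-a)$ by Lemma \ref{lem:technical}(3). Two arcs each of length at least $\tfrac{\pi}{2}r$ with $r$ not small should already be too long. The remaining cases, where $r$ is small, force all $t_i$ to be small, and there the concavity argument of the third step applies.
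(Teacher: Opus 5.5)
\textbf{Verdict: the outline matches the paper's proof, but the $S_4$ case is left open and your sketch for it would not work.}

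The shared outline is: reduce to $S$ lying on the concave side of every arc; bound each component not bounding an $S_4$ region by $P_i\ge\sqrt{\pi t_i}$; conclude by strict concavity and Lemma~\ref{lem:ffacts}; and dispose of an $S_4$ component with a crude length estimate. For the reduction you use the sign of the constant curvature, while the paper uses a shrinking argument; either is fine.

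The gap is the last case. As you note, the $S_4$ bullet cannot give $P_i\ge\sqrt{\pi t_i}$: Lemma~\ref{lem:ffacts} bounds $f_a$ \emph{above} by $\sqrt{\pi t}$, and indeed $P(S_4)<\sqrt{\pi t}$ once $t>\sigma$. Your fallback, ``two arcs each of length at least $\frac{\pi}{2}r$'', is false for the $S_4$ arc, which subtends only $\theta\le\theta_{max}\approx 1.21<\frac{\pi}{2}$. Also, ``$r$ not small'' is never quantified.

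The missing observation is this. The $S_4$ arc is centered on the wall of length $1$ that it meets orthogonally, and it passes through $(a,a)$. Hence $r=\frac{1-a}{\sin\theta}\ge 1-a$, and its own length is $\frac{(1-a)\theta}{\sin\theta}\ge 1-a$. By Theorem~\ref{thm:corner}, every other component avoids all corners of $Q_a$. So each is a quarter, semi, three-quarter or full circle of the same radius, with length $\ge\frac{\pi}{2}r\ge\frac{\pi}{2}(1-a)$. Therefore $P(S)\ge(1+\frac{\pi}{2})(1-a)>\sqrt2(1-a)$. This exceeds $\max f_a$ by Lemma~\ref{lem:technical}(3) when $a\ge\beta$, and exceeds $1=\max f_a$ when $a<\beta$. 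In particular, an $S_4$ component forces $r$ to be large, so a case split on the size of $r$ is not the right organization.

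\textbf{Smaller points.}
\begin{enumerate}
\item The arc centred at $(a,a)$ meeting both sides of length $a$ orthogonally is three quarters of a circle, so $P_i=\frac{3\pi}{2}r$. With your value $\frac{\pi}{2}r$, the claimed bound fails, since $\frac{\pi}{2}r<\frac{\sqrt3}{2}\pi r=\sqrt{\pi t_i}$. It does hold with the correct length.
\item The paper's ``quarter circle surrounding the non-convex corner'' is a different arc: the one centred at $(0,0)$ with radius $>\sqrt2\,a$. For it, $P_i=\sqrt{\pi(t_i+a^2)}>\sqrt{\pi t_i}$, a strict inequality rather than the equality you claim for quarter circles.
\item Your list of arcs also omits quarter circles at the convex corners $(0,a)$ and $(a,0)$, semicircles with both ends on one edge, and full circles. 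All of these satisfy $P_i\ge\sqrt{\pi t_i}$, so this is only bookkeeping.
\item You are right that Lemma~\ref{lem:ffacts} only covers $|S|\le\min\{\frac{\pi}{4},\frac{\pi}{2}(1-a)^2\}$, and after complementing $|S|$ may even exceed $\frac12|Q_a|$. The paper passes over this. However, small $r$ does not make $|S|=\sum t_i$ small, since there may be many components. The correct fix is that whenever $|S|$ lies outside that range, $\sqrt{\pi|S|}$ already exceeds $\max f_a$. This follows from essentially the numerical comparison in the proof of Lemma~\ref{lem:ffacts}.
\end{enumerate}
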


\begin{proof} Suppose for a contradiction that $\partial S$ has at least two components and that all components have the same radius $r$. If the region $S$ lies inside one circular component and outside another, then by shrinking both at appropriate rates, the perimeter decreases while the area stays constant, contradicting minimality of $S$.  Thus, by replacing $S$ with its complement, we may assume without loss of generality that $S$ lies inside all of the circular components of its boundary.

Now, assume that no smooth boundary component bounds an $S_4$ region.  For a quarter, semi, or full circle, the length of the corresponding curve is at least $\sqrt{\pi t_i}$, where $t_i$ is the area enclosed by the $i$-th smooth boundary component.  For a curve touching the non-convex corner, it cannot make an acute angle with either side, by Theorem \ref{thm:corner}. Hence, it is either a semicircle touching the side of length $a$ or it touches the side of length $1-a$. In the first case, we find that its length is at least $\sqrt{\pi t}$, and the same conclusion holds for the second case by Lemma \ref{lem:secondcase}. Thus, in all cases, the length of $\partial S_i$ is bounded below by $\sqrt{\pi t_i}$.  Then, using Lemma \ref{lem:ffacts}, we find that
 $$P(S)  \geq \sum_i \sqrt{\pi t_i}  > \sqrt{\pi \sum_i t_i} = \sqrt{\pi t} \geq f_a(t),$$ proving that $S$ is not an isoperimetric minimizer.

It remains to consider the case where $\partial S$ contains a curve bounding an $S_4$ region. Such a curve has radius larger than or equal to $1-a$.  Observe that a quarter circle with radius larger than or equal to $1-a$ has perimeter at least $\frac{\pi}{2}(1-a)$.  Thus, an $S_4$ region with any other circular arc has perimeter at least $(1-a) + \frac{\pi}{2}(1-a) > \sqrt{2}(1-a)$.  From Lemma \ref{lem:technical}(3), this is larger than $\max f_a$ when $a\geq \beta$; and when $a < \beta$, $\sqrt{2}(1-a) > 1$, which is larger than $\max f_a$.

\end{proof}

\subsection{\texorpdfstring{Showing that $f_a$ is the isoperimetric profile}{Showing fa is the isoperimetric profile}}\label{sec:faiso}

In this section, we complete the proof of Theorem \ref{thm:iso}.  By the results of Section \ref{sec:Sconnected}, we already know that all isoperimetric minimizers are connected with exactly one smooth boundary component. Our first goal will be to show that minimizer must be a region of type $S_1,S_2,S_3$ or $S_4$. 

\begin{proposition}\label{prop:ruleoutlots}Suppose $S$ is an isoperimetric minimizer.  Then $S$ must be one of the regions $S_1,S_2,S_3,$ or $S_4$.
\end{proposition}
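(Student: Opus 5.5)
By Section \ref{sec:Sconnected}, $\partial S\cap\text{int}(Q_a)$ is a single smooth curve $\Gamma$. By Theorem \ref{thm:structure}, $\Gamma$ is either a line segment or an arc of a circle. Replacing $S$ by its complement if needed, we may assume $|S|\le \frac12|Q_a|$. The plan is to enumerate where the two endpoints of $\Gamma$ can lie and to discard every configuration except $S_1,\dots,S_4$.

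By Theorem \ref{thm:corner}(1), neither endpoint lies at one of the five convex corners. Each endpoint is therefore either an interior point of an edge, where $\Gamma$ meets $\partial Q_a$ orthogonally, or the nonconvex corner $(a,a)$, where $\Gamma$ makes no acute angle with either short edge. A closed curve $\Gamma$ in the interior would be a full circle. I would rule this out by comparing it with a quarter circle of the same area placed in a convex corner. That quarter circle has perimeter $\sqrt{\pi t}$, which is half the perimeter $2\sqrt{\pi t}$ of the full circle. It fits inside $Q_a$ whenever the full circle does, since its radius is twice as large but $t\le \frac{1}{2}(1-a^2)$ is small enough.

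\emph{Segment case.} If both endpoints are orthogonal edge points, the two edges must be parallel and $\Gamma$ is perpendicular to both. The segment therefore joins the two sides of length $1$ (giving $S_2$), or joins a side of length $1-a$ to the opposite side of length $1$ (giving $S_3$), or joins a side of length $a$ to the opposite side of length $1$. In this last case it has length $1$ and cuts off a region of area at most $a$. A quarter circle beats it, and by Lemma \ref{lem:ffacts} this shows it is not a minimizer. If one endpoint is $(a,a)$, the other endpoint meets an edge orthogonally. The non-acute condition at the corner then forces $\Gamma$ to be the horizontal or vertical segment continuing a short edge to the far side. This is again an $S_3$ region (length $1-a$).

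\emph{Arc case.} If both endpoints are orthogonal edge points on non-parallel edges, the center of the circle is the intersection of the two edge lines. This gives a quarter circle about one of the corners. Centered at a convex corner it is $S_1$. Centered at the nonconvex corner it is a quarter disk whose complement side contains $S$, and that complement has perimeter $\frac{\pi}{2}r$ versus the competitor $(\pi t)^{1/2}$. I would dispose of it by Lemma \ref{lem:ffacts} together with the area bound. If the edges are parallel, an arc orthogonal to both would have its center on both lines, which is impossible. If one endpoint is $(a,a)$, the three possibilities listed before Lemma \ref{lem:secondcase} occur:
\begin{itemize}
\item the other endpoint is on a side of length $a$, so $\Gamma$ is a semicircle with $P\ge\sqrt{\pi t}$;
\item the other endpoint is on a side of length $1-a$, where Lemma \ref{lem:secondcase} gives $P\ge \sqrt{\pi t}$;
\item the other endpoint is on a side of length $1$, which is exactly $S_4$.
\end{itemize}
In the first two cases, Lemma \ref{lem:ffacts} gives $\sqrt{\pi t}\ge f_a(t)$. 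Equality would force the configuration to have the same perimeter as a quarter circle, and I would then need strictness to exclude it. The fallback is that a minimizer of equal perimeter must satisfy the regularity of Theorem \ref{thm:corner}, while a semicircle at the corner with $P=\sqrt{\pi t}$ can be perturbed as in Proposition \ref{prop:convexcorner}.

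The main obstacle is the equality and boundary cases in Lemma \ref{lem:ffacts}. That lemma gives strict inequality only for $t>T$ and only up to the maximal $S_1$ area. For the semicircle and Lemma \ref{lem:secondcase} configurations with $t\le T$, equality $P=\sqrt{\pi t}$ occurs only at $\theta=\pi/2$ in Lemma \ref{lem:secondcase}. That arc is a quarter circle centered on the line of the side of length $1-a$, and I would check directly that it is not orthogonal there, or else that it coincides with an excluded configuration. For the semicircle, $P=\pi r>\sqrt{\pi\cdot \pi r^2/2}$ strictly, so it is excluded outright.
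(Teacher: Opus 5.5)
Your skeleton matches the paper's: use Theorems~\ref{thm:structure} and~\ref{thm:corner} to list where the single boundary curve can end, then discard everything except $S_1$--$S_4$. Using Lemma~\ref{lem:secondcase} for the arc from the non-convex corner to a side of length $1-a$, instead of the paper's reflection, is fine. The enumeration itself, however, is wrong in two places.

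\textbf{Segment case.} The two sides of length $1$ are perpendicular, so no segment is orthogonal to both. A segment from a side of length $1-a$ to the opposite side of length $1$ has length $1$ and bounds an $S_2$ region. A segment from a side of length $a$ to the opposite side of length $1$ has length $1-a$ and is exactly an $S_3$ region. Your step ``a quarter circle beats it'' therefore asserts that $S_3$ regions are never minimizers. That is false: by \eqref{eq:profile}, for $a>\gamma$ and $\frac{1}{\pi}(1-a)^2<t<a(1-a)$ they are the minimizers. That exclusion has to be dropped.

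\textbf{Arc case (the substantive gap).} An arc orthogonal to two perpendicular sides is centered where the \emph{lines} of those sides meet, and this need not be a corner of $Q_a$. The lines of the two sides of length $1-a$ meet at the origin. This gives the arc $x^2+y^2=r^2$ with $\sqrt{2}\,a<r\le 1$, which encloses the removed square and has $P=\frac{\pi}{2}r=\sqrt{\pi(t+a^2)}$. It is exactly the ``quarter circle surrounding the non-convex corner'' excluded from the definition of $S_1$, and it is the one configuration the paper's proof treats by hand (Case~1).

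You instead describe a quarter disk centered at $(a,a)$. That does not occur: an arc centered at $(a,a)$ and orthogonal to the two short sides is a three-quarter circle. Moreover, ``Lemma~\ref{lem:ffacts} together with the area bound'' does not dispose of the origin-centered arc:
\begin{itemize}
\item The lemma requires $t\le\frac{\pi}{2}(1-a)^2$. This fails, e.g., for $a=0.6$ and $t$ near $\frac12(1-a^2)=0.32$, since $\frac{\pi}{2}(0.4)^2\approx 0.25$.
\item For $a=0.7$ the enclosed area already exceeds $\frac12|Q_a|$. Under your normalization $S$ is then the outer region, and nothing in your argument bounds its perimeter $\frac{\pi}{2}r$.
\end{itemize}
The paper compares with an equal-area $S_1$ region for small $a$. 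For large $a$ it uses $P(S)\ge\sqrt{\pi}\,a>\sqrt{2}(1-a)$ together with Lemma~\ref{lem:technical}(3), a bound that does not depend on which side of the arc $S$ lies.

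\textbf{Two smaller points.} First, your parallel-sides argument ignores arcs with both endpoints on the \emph{same} side. These are semicircles, which the paper excludes via $P>\sqrt{\pi t}\ge f_a(t)$. Second, the equality case $\theta=\pi/2$ of Lemma~\ref{lem:secondcase} is the radius-$a$ quarter circle about the convex corner $(a,0)$, i.e.\ an $S_1$ region, so it needs no exclusion. It also meets the bottom side orthogonally by construction, so the non-orthogonality check you propose could not succeed.
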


\begin{proof}We already know that $S$ must be connected with a single smooth boundary component.  We know from Theorem \ref{thm:corner} that $\partial S$ cannot intersect any corner of $Q_a$, except possibly the non-convex corner.  We also know from Theorem \ref{thm:structure} that $\partial S$ has constant curvature, hence, it is a line segment or a portion of a circle, and that any intersections between $\partial S$ and $\partial Q$ are orthogonal, except possibly if they intersect at the nonconvex corner.   In particular, if $\partial S$ is a line segment, then it is an $S_2$ or $S_3$ region. Thus, for the rest of the proof, we assume that $\partial S$ is an arc of a circle.

If $\partial S$ is a full circle or semicircle of area $t$, then $P(S) > \sqrt{\pi t} \geq f_a(t)$. Therefore, $S$ is not a minimizer. Thus, we may assume that either $\partial S$ is a quarter circle or that it touches the non-convex corner.

\textit{Case 1}:  $S$ is quarter circle.  Then $S$ is an $S_1$ region except when it is a quarter circle surrounding the non-convex corner. Assume it surrounds the non-convex corner. If there is a region of type $S_1$, which has the same area as $S$, then it has smaller perimeter than $S$, which would contradict minimality.  

If $a\in \left[0, 1 - \frac{\sqrt{2}}{\pi}\right]$, the largest quarter circle of type $S_1$ we can fit in $Q_a$ has radius $1$. By continuity, we can find a region of type $S_1$ with $|S_1| = |S|$, showing $S$ is not minimal.

If $a\in \left(1 - \frac{\sqrt{2}}{\pi},1\right)$, since $P(S) = \sqrt{\pi(t + a^2)}$, we see that $P(S)\geq \sqrt{\pi}a$. As $\pi > 2$ and $a > 1-a$ for $a > 1-\frac{\sqrt{2}}{\pi} \approx 0.54$, we have $P(S) \geq \sqrt{\pi} a > \sqrt{2} (1-a)$.  Finally, since $1-\frac{\sqrt{2}}{\pi} > \gamma$, we conclude that $P(S) >   \max f_a$, by Lemma \ref{lem:technical}(3). This contradicts minimality of $S$ and completes the first case.

\textit{Case 2}:  $S$ touches the non-convex corner.  We assume that $\partial S$ is a portion of a circle touching the non-convex corner of $Q_a$ and no other corner.  If the other end point of $\partial S$ is on a wall of length $a$, then $\partial S$ is a half-circle, so it is not a minimizer.  If it is on the wall of length $1$, it is an $S_4$ region.  Finally, if the other endpoint of $\partial S$ is on a wall of length $1-a$, then we reflect it as in Figure \ref{fig:reflection}, to obtain a new region $S'$ of area $t$ and with $|\partial S'| = |\partial S|$.  Then $\partial S'$ touches a wall of length $1-a$ and a wall of length $1$.  If it hits the wall of length $1$, it is a quarter circle, hence a region of type $S_1$.  If it does not hit perpendicularly, then $S'$ is not a minimizer by Theorem \ref{thm:structure}.

\begin{center}
\begin{tikzpicture}[scale=3]
                \draw[thick] (0,1)  -- (1,1)  -- (1,0)  -- (0.15,0)  -- (0.15,0.15)  -- (0,0.15)  -- cycle;
                \draw[thick] (0.785,0) arc (0:155:0.335);
                \draw[thick,dashed] (0.365,0) arc (180:30:0.335);
                \draw[dashed] (0.575,-0.1) -- (0.575,1.08);
                \draw (0.15,0.3) node[anchor=south west] {$\partial S$};
                \draw (0.305,0) -- (0.305,0.06) -- (0.365,0.06);
                \draw (0.845,0) -- (0.845,0.06) -- (0.785,0.06);
\end{tikzpicture}
\captionof{figure}{Reflecting the region}\label{fig:reflection}
\end{center}

\end{proof}

We now work towards completing the proof of Theorem \ref{thm:iso} by determining for each $a\in [0,1)$ and each $t\in \left(0,\frac{1}{2}(1-a^2)\right]$ which type of the admissible regions has the shortest perimeter.  We recall that $T = T(a)$ is defined to be the $t$-value at which the formula for $f_a$ transitions away from $\sqrt{\pi t}$.  The next proposition indicates that for any $a\in [0,1)$, $f_a(t)$ is the isoperimetric profile for all $t\leq T$.

\begin{proposition}\label{prop:fsmallt} For any $a\in [0,1)$ and any $t\in (0,T]$, $S_1$ regions are isoperimetric minimizers.  Moreover, they are the only minimizers, unless $t = T$.
\end{proposition}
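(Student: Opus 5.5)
Throughout, an isoperimetric minimizer exists for each $t$ by standard compactness of sets of finite perimeter, so the plan is to show that every admissible competitor has perimeter at least $\sqrt{\pi t}$. By Proposition~\ref{prop:ruleoutlots}, any minimizer $S$ with $|S| = t$ is one of the regions $S_1,S_2,S_3,S_4$. It therefore suffices to compare the perimeters of these four families at area $t\leq T$.

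\textbf{Step 1.} Since $f_a$ is realized by admissible regions, $f_a(t)$ is an upper bound for the profile. On $(0,T]$ we have $f_a(t)=\sqrt{\pi t}$, and this value is attained by an $S_1$ region. For this, check that $T\leq \min\{\frac{\pi}{4},\frac{\pi}{2}(1-a)^2\}$ in each of the three cases defining $T$:
\begin{itemize}
\item $T=\frac1\pi$ when $a\leq\alpha\approx 0.10$;
\item $T=\sigma(a)\leq \frac1\pi$ when $a\in[\alpha,\gamma]$, using Lemma~\ref{lem:technical}(7), where $\sigma$ is decreasing with $\sigma(\alpha)=\frac1\pi$;
\item $T=\frac1\pi(1-a)^2$ when $a\geq \gamma$.
\end{itemize}
Each of these is at most $\frac{\pi}{2}(1-a)^2$ for the corresponding $a$.

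\textbf{Step 2.} We show that $S_2$, $S_3$ and $S_4$ regions of area $t\leq T$ have perimeter at least $\sqrt{\pi t}$, with strict inequality for $t<T$.
\begin{itemize}
\item An $S_2$ region has perimeter $1$. Since $\sqrt{\pi t}\leq\sqrt{\pi T}\leq 1$, it loses, strictly unless $t=T=\frac1\pi$.
\item An $S_3$ region has perimeter $1-a$ and area at most $a(1-a)$, so it only exists for $t\leq a(1-a)$. For $a\geq\gamma$ we have $\sqrt{\pi t}\leq\sqrt{\pi T}=1-a$. For $a<\gamma$ we have $T\leq\frac1\pi$, and we need $\sqrt{\pi t}<1-a$, i.e.\ $\pi t<(1-a)^2$. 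Since $\sigma(a)$ is decreasing with $\sigma(\gamma)=\frac1\pi(1-\gamma)^2$, we get $T\leq \frac1\pi(1-a)^2$ in this range as well.
\item An $S_4$ region exists only for $t\geq a(1-a)$.
  \begin{itemize}
  \item For $a\geq\gamma$, we have $T=\frac1\pi(1-a)^2\leq a(1-a)$, because $(1-a)\leq \pi a$ exactly when $a\geq \gamma$. So no $S_4$ region has area below $T$, except at the endpoint.
  \item For $a<\alpha$, Lemma~\ref{lem:technical}(4) gives $P(S_4)>1\geq\sqrt{\pi t}$. Indeed, $a(t)$ is increasing, so $a<a(\frac1\pi)=\alpha$ forces $P(S_4)\neq 1$ on the relevant range, and continuity fixes the sign.
  \item For $a\in[\alpha,\gamma]$, use convexity from Lemma~\ref{lem:technical}(2) together with uniqueness in (6). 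The function $P(S_4)-\sqrt{\pi t}$ is strictly convex in $t$ and vanishes only at $t=\sigma$. Its sign below $\sigma$ is positive, which can be checked at $t=a(1-a)$, where $P(S_4)$ equals the perimeter of a vertical-ish arc and compares with an $S_3$ region of perimeter $1-a$.
  \end{itemize}
\end{itemize}

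\textbf{Main obstacle.} The hard part is the $S_4$ comparison for $a\in[\alpha,\gamma]$: showing that $P(S_4)>\sqrt{\pi t}$ on $[a(1-a),\sigma)$, and not just that the two curves cross once. I would first try to evaluate the sign at the left endpoint $t=a(1-a)$, where $\theta$ is extreme. Failing that, I would argue by continuity in $a$ from $a=\alpha$: at $\alpha$ the crossing happens at $t=\frac1\pi$, where $P=1$, and $a(t)$ is increasing by Lemma~\ref{lem:technical}(4).

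\textbf{Uniqueness.} The strict inequalities above show that for $t<T$ only $S_1$ achieves $f_a(t)$. By Proposition~\ref{prop:ruleoutlots}, the minimizers are therefore exactly the $S_1$ regions for $t<T$.
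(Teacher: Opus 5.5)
Your overall plan matches the paper's: reduce to $S_1,\dots,S_4$ via Proposition~\ref{prop:ruleoutlots}, then compare perimeters at area $t\le T$. Step~1, the $S_2$ comparison, and the $S_4$ comparison for $a\ge\gamma$ are fine. For $a\in[\alpha,\gamma]$ your argument also works, and it differs from the paper's. You use the unique zero at $\sigma$ from Lemma~\ref{lem:technical}(6), plus the sign at $t=a(1-a)$, where $\theta=0$ and $P(S_4)=1-a>\sqrt{\pi a(1-a)}$ exactly because $a<\gamma$. The paper instead bounds $\frac{d}{dt}P(S_4)\le\frac{1}{1-\gamma}<\frac{\pi}{2}\le\frac{d}{dt}\sqrt{\pi t}$. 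So the case you flagged as the main obstacle is actually settled by your own argument. The real problems are elsewhere.

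\textbf{The $S_4$ comparison for $a<\alpha$ is wrong.} The claim $P(S_4)>1$ on $[a(1-a),\frac1\pi]$ is false for $0<a<\alpha$. At the left endpoint, $\theta=0$ and $P(S_4)=1-a<1$. Moreover, Lemma~\ref{lem:technical}(4) gives the opposite of what you assert. Since $a(\cdot)$ is increasing and $a<\alpha=a(\frac1\pi)$, the area $\tau(a)$ at which $P(S_4)=1$ satisfies $\tau(a)<\frac1\pi$, so it lies inside your range. All you can extract is $P(S_4)>1$ on $(\tau(a),\frac1\pi]$, in particular at $t=\frac1\pi$. Convexity of $P(S_4)-\sqrt{\pi t}$ does not close the gap: a strictly convex function positive at both endpoints can still be negative in between. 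You need one more ingredient, and either of these works:
\begin{itemize}
\item The derivative comparison from the neighboring subcase: for $t\le\frac1\pi$, $\frac{d}{dt}\bigl(P(S_4)-\sqrt{\pi t}\bigr)\le\frac{1}{1-\alpha}-\frac{\pi}{2}<0$. So the difference is decreasing and bounded below by its positive value at $t=\frac1\pi$.
\item The monotonicity of $j$. The computation $\frac{dj}{d\theta}=(1-a)(\pi-2\theta)(\theta\cot\theta-1)\csc^2\theta<0$ in the proof of Lemma~\ref{lem:technical}(6) holds for every $a$. Hence $P(S_4)^2-\pi t$ is strictly decreasing in $t$, and positivity at $t=\frac1\pi$ propagates to the whole interval.
\end{itemize}

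\textbf{The $S_3$ comparison for $a<\gamma$ uses a false inequality.} The bound $T\le\frac1\pi(1-a)^2$ goes the wrong way. For example, for $0<a\le\alpha$ one has $T=\frac1\pi>\frac1\pi(1-a)^2$. In general $\sqrt{\pi\sigma(a)}=P(S_4)\ge 1-a$, so $T\ge\frac1\pi(1-a)^2$. Monotonicity of $\sigma$ only yields the lower bound $\sigma(a)\ge\sigma(\gamma)$, so the deduction is a non sequitur. The repair is the one the paper uses, and you already stated its key fact: an $S_3$ region of area $t$ exists only for $t\le a(1-a)$. Since $\pi a(1-a)<(1-a)^2$ exactly when $a<\gamma$, this gives $\sqrt{\pi t}<1-a$ on that range.

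Your uniqueness conclusion for $t<T$ relies on strict inequalities in both of these cases. It holds once they are repaired as above.
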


\begin{proof}

By Proposition \ref{prop:ruleoutlots}, we only need to compare $S_1$ regions with $S_2,S_3,$ and $S_4$ regions.  

\textit{Case 1}: Comparison with $S_2$ regions. Since $T\leq \frac{1}{\pi}$ (by Lemma \ref{lem:technical}(7)), it follows that $P(S_1) < P(S_2) = 1$ when $t<T$.

\textit{Case 2}: Comparison with $S_3$ regions.  We begin with the assumption that $a\in [\gamma,1)$.  Then $T(a) = \frac{1}{\pi}{(1-a)^2}$.  Observe that when $t= T$, $P(S_1) = P(S_3)$, and hence $P(S_1) < P(S_3)$ for $t < T$.  Next, when $a\in [0,\gamma)$, we note that there is an $S_3$ region only when $t\leq a(1-a)$.  Since $a< \gamma$, it follows that $a(1-a) < \frac{1}{\pi}(1-a)^2\leq T$, by Lemma \ref{lem:technical}(7). 

In particular, the proof of the proposition is complete for $t\in (a(1-a), T]$, since there is no corresponding $S_3$ region; and, for $t\in(0, a(1-a)]$, we have $$P(S_1) = \sqrt{\pi t}  \leq \sqrt{\pi a(1-a)} <  1-a = P(S_3).$$

\textit{Case 3}: Comparison with $S_4$ regions. We consider three subcases:  $a\in[0,\alpha)$, $a\in[\alpha,\gamma]$, and $a\in (\gamma,1)$.  

When $a\in [0,\alpha)$, by Lemma \ref{lem:technical}(4), the $t$-value at which $P(S_4(t)) = 1$ is less than $1/\pi$. Hence $P(S_1) < P(S_4)$ at $t = 1/\pi = T$, completing the case where $a\in [0,\alpha)$.

Next, assume $a\in [\alpha,\gamma]$.  Then, by definition of $\sigma$, when $t= T$ it follows that $P(S_1) = P(S_4)$.  The proof in this subcase will be complete once we show $\frac{d}{dt} P(S_1) > \frac{d}{dt} P(S_4)$ for all $t < T$.   Lemma \ref{lem:technical}(2) implies that 
\begin{equation*}
\frac{d}{dt} P(S_4)  = \frac{\sin \theta}{1-a} \leq \frac{1}{1-\gamma}\approx 1.32\quad\textrm{and}\quad\frac{d}{dt} P(S_1) = \frac{\sqrt{\pi}}{2\sqrt{t}} \geq \frac{\sqrt{\pi}}{2\sqrt{T}}.
\end{equation*}
Since $\sigma\leq \frac{1}{\pi}$, by Lemma \ref{lem:technical}(7), we have that $\frac{d}{dt} P(S_1) \geq \frac{\sqrt{\pi}}{2\sqrt{\frac{1}{\pi}}} = \frac{\pi}{2} \approx 1.57$.  Thus, $\frac{d}{dt} P(S_1) > \frac{d}{dt} P(S_4)$ for all $t< T$.

Finally, we assume $a\in (\gamma,1)$.  We follow a similar strategy to the proof in the previous sub-case.  First, since $a > \gamma$ and since a region of type $S_4$ has perimeter at least $1-a$, we see that $P(S_1) < P(S_4)$ at $t= T$. We have $\frac{d}{dt} P(S_4)\leq \frac{1}{1-a}$.  Substituting $t = T = \frac{1}{\pi}(1-a)^2$ into $\frac{d}{dt} P(S_1)$, we get $\frac{\pi}{2}\cdot\frac{1}{1-a}$.  Thus, $\frac{d}{dt} P(S_1) > \frac{d}{dt} P(S_4)$ when $a > \gamma$ as well.

\end{proof}

We can now complete the proof that $f_a$ is the isoperimetric profile of $Q_a$.

\begin{proposition}For any $a \in [0,1)$ and any $t \in \left(T, \frac{1}{2}(1-a^2)\right]$, $f_a$ is the isoperimetric profile for $Q_a$.

\end{proposition}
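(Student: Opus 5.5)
By Proposition \ref{prop:ruleoutlots}, every minimizer is one of $S_1,S_2,S_3,S_4$. It therefore suffices to show that for $t\in\left(T,\frac12(1-a^2)\right]$ the minimum of $P(S_i)$, taken over those $i$ for which a region of type $S_i$ with area $t$ exists, equals $f_a(t)$. Regions of type $S_1$ are handled by Lemma \ref{lem:ffacts}: for every admissible $t>T$ we have $P(S_1)=\sqrt{\pi t}>f_a(t)$, so $S_1$ never beats $f_a$ in this range. The rest of the argument compares $S_2$, $S_3$ and $S_4$. Their perimeters and areas of existence are as follows.
\begin{itemize}
\item $P(S_2)=1$, available for all $t$ in range.
\item $P(S_3)=1-a$, available only for $t\le a(1-a)$.
\item $P(S_4)$, available for $t\in\left[a(1-a),\frac12(1-a^2)\right]$; it is increasing and convex in $t$ by Lemma \ref{lem:technical}(2).
\end{itemize}
The comparison splits along the same four ranges of $a$ used to define $f_a$.

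\textbf{Case $\gamma\le a<1$.} Here $T=\frac1\pi(1-a)^2$. For $T<t\le a(1-a)$, every $S_2$ or $S_4$ region has perimeter at least $1-a$. For $S_4$ this holds because the boundary arc runs from the corner $(a,a)$ to the wall $x=1$. Hence $S_3$ is optimal and $f_a=1-a$. For $t\ge a(1-a)$, only $S_2$ and $S_4$ remain. Since $\gamma>\beta$, Lemma \ref{lem:technical}(5) gives $P(S_4)<1=P(S_2)$, so $f_a=P(S_4)$.

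\textbf{Case $\beta\le a\le\gamma$.} Here $a(1-a)<\frac1\pi(1-a)^2\le\sigma=T$, so no $S_3$ region exists for $t>T$. Lemma \ref{lem:technical}(5) gives $P(S_4)<1$, so $S_4$ wins, and $f_a=P(S_4)$ on $[\sigma,\frac12(1-a^2)]$.

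\textbf{Case $\alpha\le a\le\beta$.} Again $t>T$ excludes $S_3$. By Lemma \ref{lem:technical}(4) and monotonicity of $a(t)$, the equation $P(S_4)=1$ holds at exactly one area in range, namely $t=\tau(a)$. Since $P(S_4)$ is increasing in $t$, we get $P(S_4)<1$ for $t<\tau$ and $P(S_4)>1$ for $t>\tau$. This yields the formula for $f_a$.

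\textbf{Case $0\le a<\alpha$.} Here $T=\frac1\pi$. Lemma \ref{lem:technical}(4) together with monotonicity of $a(t)$ puts the crossing $P(S_4)=1$ at some $t<\frac1\pi$ (or shows there is none in range). Hence $P(S_4)\ge1$ on $(T,\frac12(1-a^2)]$, and $f_a=1$.

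The main obstacle is the sign bookkeeping in the last two cases. One must check that, for fixed $a$, the crossing set $\{t: P(S_4)=1\}$ is exactly $\{\tau(a)\}$ when $a\in[\alpha,\beta]$ and lies below $\frac1\pi$ when $a<\alpha$. The plan is to use Lemma \ref{lem:technical}(4): a unique pair $(a,\theta)$ solves the system for each $t$, and $a(t)$ is strictly increasing, so the map $t\mapsto a(t)$ is inverted by $\tau$. Combined with $\frac{d}{dt}P(S_4)>0$, this determines on which side of $1$ the function $P(S_4)$ lies. A sanity check is needed that for $a<\beta$ the $S_4$ domain actually reaches $\tau(a)$, i.e.\ $\tau(a)\le\frac12(1-a^2)$ and $\tau(a)\ge a(1-a)$. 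This would follow from $\tau(a)\le t_0$ and the endpoint values $\tau(\alpha)=\frac1\pi$ and $\tau(\beta)=t_0$.
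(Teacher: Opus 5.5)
Your proposal is correct and is essentially the paper's proof: you discard $S_1$ via Lemma~\ref{lem:ffacts}, then compare $S_2$, $S_3$, $S_4$ over the same four ranges of $a$ using parts (2), (4) and (5) of Lemma~\ref{lem:technical}. Only small fixes are needed: in the case $a<\alpha$ you should also say that no $S_3$ region exists for $t>T=\frac1\pi$ (since $a(1-a)\le\frac14$), and you should drop the hedge ``or shows there is none in range'', because for $a\le\beta$ a crossing $P(S_4)=1$ always exists (and if there were none, $P(S_4)\ge 1$ would fail, since $P(S_4)$ starts at $1-a$).
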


\begin{proof}From Lemma \ref{lem:ffacts}, since $t > T$ by hypothesis, we do not need to consider $S_1$ regions.  It remains to compare $S_2,S_3$, and $S_4$ regions. There are $S_3$ regions  only when $t\leq a(1-a)$ and there are $S_4$ regions only when $t \geq a(1-a)$ (and when $t = a(1-a)$, an $S_3$ region is an $S_4$ region). Hence, there is no $t$ for which we can compare $S_3$ and $S_4$.  Thus, we need only compare regions of type $S_2$ with regions of type $S_3$ or $S_4$.  We also note that, as shown in the proof of Proposition~\ref{prop:fsmallt}, for $a\leq \gamma$ we do not need to consider $S_3$ regions, as there is always an $S_1$ region with with the same area but smaller perimeter.  We break into cases depending on the value of $a$.

\textit{Case 1}:  $a\in[0, \alpha]$.  In the proof of Proposition \ref{prop:fsmallt}, we showed that the perimeter of an $S_4 = S_4(t)$ region is $1$ for some $t < \frac{1}{\pi}$.  Since $\frac{d}{dt} P(S_4)  >0$ by Lemma \ref{lem:technical}(2), it follows that $1 = P(S_2) < P(S_4)$ for all $t > \frac{1}{\pi}$.

\textit{Case 2}:  $a\in(\alpha,\beta]$.  By definition of $\sigma$, $P(S_4) = P(S_1)$ at $t=\sigma$.  Since $\sigma < \frac{1}{\pi}$ by Lemma \ref{lem:technical}(7), $P(S_1) < 1$, so $P(S_4) < 1 = P(S_2)$ at $\sigma$.  As $P(S_4)$ is an increasing function of $t$ by Lemma \ref{lem:technical}(2), and $\tau$ is defined as the $t$-value at which $P(S_4) = 1$,  it follows that $P(S_4) < 1= P(S_2)$ for $t\in(T,\tau)$, and that $P(S_4) > P(S_2)$ for $t < \tau$.

\textit{Case 3}: $a\in(\beta, \gamma)$.  By Lemma \ref{lem:technical}(5), $P(S_4) < 1 = P(S_2)$ for all $t\in \left[0,\frac{1}{2}(1-a^2)\right]$ for which both are defined.  In particular, for $a\in (\beta,\gamma]$, there is an $S_4$ region with area $t$ for all $t \in [T, \frac{1}{2}(1-a^2)]$, so that no $S_2$ region is a minimizer when $a\in(\beta,\gamma]$.

\textit{Case 4} $a\in (\gamma,1)$.  Lemma \ref{lem:technical}(5) implies $P(S_4)< P(S_2)$ whenever they are both defined.  Hence, no $S_2$ regions are minimizers; and $S_3$ regions have a shorter perimeter than corresponding $S_2$ regions when $t\in [T,a(1-a)]$.

\end{proof}

\section{A proof of Lemma \ref{lem:technical}}\label{sec:IV} In this section, we prove Lemma \ref{lem:technical}. We consider an $S_4$ region, which is determined by $a$ and $\theta$; hence we sometimes write $S_4(a,\theta)$. An elementary geometric argument gives the following formulas for the perimeter and area of $S_4$.

\begin{proposition}\label{prop:geo}The perimeter of $S_4 = S_4(a,\theta)$ is given by \begin{equation}\label{eqn:perim}P(S_4) = \frac{(1-a)\theta}{\sin \theta}\end{equation} and the area of $S_4$ is \begin{equation}\label{eqn:area}|S_4| = \frac{(1-a)^2\theta}{2\sin^2 \theta} - \frac{(1-a)^2}{2\tan \theta} + a(1-a).\end{equation}

\end{proposition}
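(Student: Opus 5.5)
We place $Q_a$ in coordinates with the non-convex corner at $K=(a,a)$ and, as agreed, take the $S_4$ region to meet the right wall $\{x=1\}$. The plan is to pin down the circle carrying $\partial S_4$ in terms of $a$ and $\theta$, and then read off the arc length and the enclosed area directly.

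\emph{The circle.} Since $\partial S_4$ meets the wall $x=1$ orthogonally at a non-corner point, Theorem~\ref{thm:structure} forces the tangent there to be horizontal. Hence the center of the circle lies on the line $x=1$; write $C=(1,c)$ and let $r$ be the radius. The arc runs from the wall point $E=(1,c+r)$ to $K$. We interpret $\theta$ as the central angle $\angle ECK$; in the limit $\theta\to 0$ the arc degenerates to the horizontal segment from $K$ to the wall, as in Figure~\ref{fig:L}. Then
\[
K=C+r(-\sin\theta,\cos\theta).
\]
Comparing first coordinates gives $1-a=r\sin\theta$, so $r=(1-a)/\sin\theta$. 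The relative perimeter is the arc length $r\theta$, which is exactly \eqref{eqn:perim}.

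\emph{The area.} The region $S_4$ lies below the arc, so we decompose it along the horizontal line $y=a$.
\begin{itemize}
\item The part below $y=a$ is the rectangle $[a,1]\times[0,a]$, of area $a(1-a)$.
\item The part above $y=a$ is bounded by the segment from $K$ to $(1,a)$, the segment of the wall from $(1,a)$ to $E$, and the arc.
\end{itemize}
For $\theta<\pi/2$ the center $C$ lies below $y=a$, since $a-c=r\cos\theta>0$. In that case the upper piece is the circular sector $CKE$ minus the right triangle with vertices $C$, $K$, $(1,a)$. The sector has area $\tfrac12 r^2\theta$. The triangle has legs $r\sin\theta$ and $r\cos\theta$, so its area is $\tfrac12 r^2\sin\theta\cos\theta$. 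Substituting $r=(1-a)/\sin\theta$ gives
\[
\frac{(1-a)^2\theta}{2\sin^2\theta}-\frac{(1-a)^2}{2\tan\theta},
\]
and adding $a(1-a)$ yields \eqref{eqn:area}.

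\emph{Checks and cases.} As $\theta\to0$ the upper piece tends to $0$, consistent with the degenerate $S_3$-type region of area $a(1-a)$. If $\theta\ge\pi/2$ were allowed, the triangle would lie above $C$ and is added rather than subtracted. The signed quantity $\tfrac12 r^2\sin\theta\cos\theta$ handles this automatically, since $\cos\theta\le 0$ there, so the same formula holds.

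The only real obstacle is bookkeeping. One must confirm that the arc stays within the strip $a\le x\le 1$ and does not leave $Q_a$. One must also confirm that the "non-acute angle at $K$" condition in the definition of $S_4$ is consistent with this parametrization. Neither issue affects the two formulas.
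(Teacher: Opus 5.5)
Your argument is correct, and it is exactly the elementary geometric computation the paper leaves implicit (the paper only asserts the formulas): the center lies on $x=1$, $r=(1-a)/\sin\theta$ with $\theta$ the central angle, the arc length is $r\theta$, and the area is the rectangle $a(1-a)$ plus the sector $\tfrac12 r^2\theta$ minus the right triangle $\tfrac12 r^2\sin\theta\cos\theta$. One small remark: orthogonality at the wall $x=1$ is already part of the definition of an $S_4$ region, so you do not need Theorem~\ref{thm:structure} to place the center on that line.
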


Both equations \eqref{eqn:perim} and \eqref{eqn:area} have continuous extensions to $\theta = 0$ with $P(S_4) = 1-a$ and $|S_4| = a(1-a)$.  We will always use these extensions.

Recall that $|Q_a| = 1-a^2$. We are interested in $t\in (0, \tfrac{1}{2}(1-a^2)]$.  The following lemma indicates the domain of $\theta$ takes the form $[0,\theta_{max}]$, where $\theta_{max}$ is independent of $a$. The value of $\theta_{max}$ is approximately $1.21$.

\begin{lemma}\label{lem:thetamax}  The equation $|S_4| = \frac{1}{2}(1-a^2)$ admits a unique solution $\theta_{max}\in[0,\tfrac{\pi}{2}]$.

\end{lemma}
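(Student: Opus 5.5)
Write $F(\theta) := \frac{\theta}{\sin^2\theta} - \frac{1}{\tan\theta} = \frac{\theta - \sin\theta\cos\theta}{\sin^2\theta}$, extended continuously by $F(0)=0$.

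\textbf{Step 1: remove the dependence on $a$.} By \eqref{eqn:area},
\[
|S_4| = \tfrac{1}{2}(1-a)^2F(\theta) + a(1-a).
\]
Subtract $a(1-a)$ from both sides of the equation $|S_4| = \frac{1}{2}(1-a^2)$. The right side becomes
\[
\tfrac{1}{2}(1-a)(1+a) - a(1-a) = \tfrac{1}{2}(1-a)^2 .
\]
Since $a<1$, we may divide by $\frac{1}{2}(1-a)^2>0$. The equation is then equivalent to
\[
F(\theta) = 1, \qquad\text{that is,}\qquad \theta - \sin\theta\cos\theta = \sin^2\theta .
\]
This equation does not involve $a$, so any solution $\theta_{max}$ is automatically independent of $a$.

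\textbf{Step 2: existence.} At the endpoints of the interval we have $F(0)=0<1$ and $F(\pi/2) = \pi/2 > 1$. Since $F$ is continuous on $[0,\pi/2]$, the intermediate value theorem gives a root.

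\textbf{Step 3: uniqueness.} It suffices to show $F$ is strictly increasing on $(0,\pi/2]$. Differentiating the numerator gives
\[
\frac{d}{d\theta}\bigl(\theta - \sin\theta\cos\theta\bigr) = 1-\cos 2\theta = 2\sin^2\theta .
\]
So
\[
F'(\theta) = \frac{2\sin^4\theta - 2\sin\theta\cos\theta\,(\theta - \sin\theta\cos\theta)}{\sin^4\theta}.
\]
The sign of $F'$ is therefore the sign of
\[
\sin^3\theta - \cos\theta\,(\theta - \sin\theta\cos\theta) = \sin\theta - \theta\cos\theta ,
\]
where we used $\sin^3\theta + \sin\theta\cos^2\theta = \sin\theta$. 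On $(0,\pi/2)$ this is $\cos\theta\,(\tan\theta - \theta) > 0$. At $\theta = \pi/2$ it equals $1>0$. Hence $F$ is strictly increasing and the root is unique.

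This monotonicity computation is the only real step; the trigonometric simplification above handles it. The numerical value $\theta_{max}\approx 1.21$ can then be checked by evaluating $F$ near that point.
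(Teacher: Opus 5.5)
Your proof is correct and follows essentially the same route as the paper: reduce to the $a$-independent equation $\frac{\theta}{\sin^2\theta}-\frac{1}{\tan\theta}=1$, get existence from the intermediate value theorem using the endpoint values $0$ and $\pi/2$, and get uniqueness from $F'(\theta)=2(\sin\theta-\theta\cos\theta)/\sin^3\theta>0$. The only differences are cosmetic. You write out the elimination of $a$ explicitly and justify positivity via $\tan\theta>\theta$, whereas the paper proves $\theta\cot\theta-1<0$ by differentiating it.
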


\begin{proof} The equation $|S_4| = \frac{1}{2}(1-a^2)$ is equivalent to the equation \begin{equation}\label{eqn:thetamax} 1 = \frac{\theta}{\sin^2\theta} - \frac{1}{\tan\theta}.\end{equation} As $\theta$ approaches $0$ and $\tfrac{\pi}{2}$, the right side of equation \eqref{eqn:thetamax} approaches $0 $ and $\tfrac{\pi}{2}$, respectively.  By the Intermediate Value Theorem, equation \eqref{eqn:thetamax} has a solution $\theta_{max}$.

The derivative of the right side of equation \eqref{eqn:thetamax} is $-2\csc^2 \theta \left(\theta \cot \theta-1\right)$.  We claim that $\theta\cot\theta-1 < 0$ on $(0,\frac{\pi}{2})$.  Indeed, this follows from the fact that its limiting value at $0$ is $0$ and that its derivative is $\frac{1}{\sin\theta}\left(\cos\theta - \frac{\theta}{\sin\theta}\right) < 0$.  Having established the claim, uniqueness of the solution to equation \eqref{eqn:thetamax} now follows.

\end{proof}

We now prove Lemma \ref{lem:technical}(1)

\begin{proposition}\label{prop:inverse} The number $\theta_{max}$ has the property that for any fixed $a\in [0,1)$ and for any $t\in \left[a(1-a), \frac{1}{2}(1-a^2)\right]$, there is a unique $\theta = \theta(t)\in [0,\theta_{max}]$ for which $|S_4(\theta)| = t$.

\end{proposition}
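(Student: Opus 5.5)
The approach is to show that, for fixed $a\in[0,1)$, the area $|S_4(a,\theta)|$ from equation \eqref{eqn:area} is a continuous, strictly increasing function of $\theta$ on $[0,\theta_{max}]$. The intermediate value theorem then gives existence and uniqueness of $\theta(t)$. The key observation is that \eqref{eqn:area} factors as
\[
|S_4(a,\theta)| = (1-a)^2\, g(\theta) + a(1-a), \qquad g(\theta) := \frac{1}{2}\left(\frac{\theta}{\sin^2\theta} - \frac{1}{\tan\theta}\right),
\]
so all dependence on $\theta$ sits in the single function $g$, which does not depend on $a$. This factorization is also what makes $\theta_{max}$ independent of $a$.

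The first step is to handle the endpoints. By the continuous extension recorded after Proposition \ref{prop:geo}, $g(0)=0$, so $|S_4(a,0)| = a(1-a)$, which is the left endpoint of the $t$-interval. For the right endpoint, note that $|S_4(a,\theta)| = \frac12(1-a^2)$ is equivalent, after subtracting $a(1-a)$ and dividing by $\frac12(1-a)^2$, to equation \eqref{eqn:thetamax}:
\[
\tfrac12(1-a^2)-a(1-a) = \tfrac12(1-a)^2 .
\]
Hence $g(\theta_{max}) = \frac12$ and $|S_4(a,\theta_{max})| = \frac12(1-a^2)$. Both endpoint values therefore hold for every $a$.

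The second step is monotonicity. The proof of Lemma \ref{lem:thetamax} already shows that $2g'(\theta) = -2\csc^2\theta\,(\theta\cot\theta - 1) > 0$ on $(0,\frac{\pi}{2})$, since $\theta\cot\theta - 1<0$ there. Multiplying by the positive constant $(1-a)^2$, which is positive because $a<1$, shows that $\theta\mapsto |S_4(a,\theta)|$ is strictly increasing on $(0,\theta_{max}]$. It is also continuous on $[0,\theta_{max}]$, including at $0$ by the continuous extension. So it is a continuous bijection from $[0,\theta_{max}]$ onto $[a(1-a),\frac12(1-a^2)]$, and each $t$ in that interval has exactly one preimage $\theta(t)$.

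The only mildly delicate point is the endpoint $\theta=0$, where the formula must be read through its continuous extension. Strict monotonicity on the open interval together with continuity on the closed interval is enough there, so this is not a real obstacle. Everything else is inherited from Lemma \ref{lem:thetamax}.
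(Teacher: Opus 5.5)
Your proof is correct and follows the paper's route. Existence comes from continuity plus the Intermediate Value Theorem with endpoint values $a(1-a)$ and $\frac12(1-a^2)$, and uniqueness comes from positivity of $\frac{d|S_4|}{d\theta}=\frac{(1-a)^2(\sin\theta-\theta\cos\theta)}{\sin^3\theta}$, which is exactly your $(1-a)^2g'(\theta)$. Your factorization through $g$ just makes explicit why $\theta_{max}$ is independent of $a$, and it lets you reuse the sign computation from Lemma~\ref{lem:thetamax} rather than re-deriving $\sin\theta-\theta\cos\theta>0$.
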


\begin{proof} To prove existence, we note that the formula for $|S_4|$ shows that it is continuous as a function of $\theta$ on $(0,\theta_{max})$.  Since its limit as $\theta$ approaches $0$ is $a(1-a)$ and its value at $\theta_{max}$ is $\frac{1}{2} (1-a^2)$, existence follows from the Intermediate Value Theorem.

For uniqueness, fixing $a$, we compute $\frac{d |S_4|}{d\theta} = \frac{dt}{d \theta} = \frac{(1-a)^2 (\sin\theta - \theta\cos\theta)}{\sin^3\theta}$.  We claim that this is positive on $(0,\theta_{max})$.  Indeed, the factor $\frac{(1-a)^2}{\sin^3 \theta}$ is positive and $\sin\theta - \theta\cos\theta$ is positive, because it has limiting value $0$ at $\theta = 0$ and derivative $\theta\sin\theta > 0$ on $(0,\pi)$.  As $\frac{d |S_4|}{d \theta}$ is positive, it follows that for fixed $a$, there is at most one solution to the equation $|S_4| =t$.
\end{proof}

In light of Proposition \ref{prop:inverse}, for fixed $a\in[0,1)$ and $\theta\in [0,\theta_{max}]$, the map $\theta\mapsto |S_4(\theta)|$ has an inverse.  When we write $S_4(t)$, we mean $S_4(\theta)$ precomposed with this inverse.  We now prove Lemma \ref{lem:technical}(2).

\begin{proposition}\label{prop:derivativebounds} For each fixed $a\in[0,1)$,  $\frac{d}{dt} P(S_4(t)) = \frac{\sin \theta(t)}{1-a} > 0$ and $\frac{d^2}{dt^2}P(S_4(t)) > 0$ for all $t\in \left[a(1-a), \frac{1}{2}(1-a^2)\right]$.

\end{proposition}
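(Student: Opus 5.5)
We compute the derivative by the chain rule in the parameter $\theta$, using the formulas of Proposition~\ref{prop:geo} together with the computation of $dt/d\theta$ already done in the proof of Proposition~\ref{prop:inverse}. First, differentiating \eqref{eqn:perim} in $\theta$ gives
\[
\frac{dP}{d\theta} = (1-a)\,\frac{\sin\theta-\theta\cos\theta}{\sin^2\theta}.
\]
The proof of Proposition~\ref{prop:inverse} gives
\[
\frac{dt}{d\theta} = (1-a)^2\,\frac{\sin\theta-\theta\cos\theta}{\sin^3\theta},
\]
which is positive on $(0,\theta_{max}]$. Dividing, the common factor $\sin\theta-\theta\cos\theta$ cancels, and
\[
\frac{dP}{dt} = \frac{dP/d\theta}{dt/d\theta} = \frac{\sin\theta}{1-a}.
\]
This is positive for $\theta\in(0,\theta_{max}]$, since $\theta_{max}<\tfrac{\pi}{2}$ by Lemma~\ref{lem:thetamax}.

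For the second derivative, we differentiate once more by the chain rule:
\[
\frac{d^2P}{dt^2} = \frac{\cos\theta}{1-a}\cdot\frac{d\theta}{dt} = \frac{\cos\theta\,\sin^3\theta}{(1-a)^3(\sin\theta-\theta\cos\theta)}.
\]
On $(0,\theta_{max}]\subseteq(0,\tfrac{\pi}{2})$ each factor is positive: $\cos\theta>0$, $\sin\theta>0$, $1-a>0$, and $\sin\theta-\theta\cos\theta>0$ by the argument in Proposition~\ref{prop:inverse}. Hence the second derivative is positive on the open range of $\theta$.

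The only delicate point is the endpoint $t=a(1-a)$, that is, $\theta=0$. There $\sin\theta/(1-a)$ vanishes, so the claimed strict positivity of the first derivative fails literally. We will either read the statement on the half-open interval $t\in(a(1-a),\tfrac12(1-a^2)]$, or note that strict increase of $P$ on the closed interval still follows from positivity of the derivative in the interior; the latter is all that is used later. For the second derivative at $\theta=0$, we expand $\sin\theta-\theta\cos\theta=\theta^3/3+O(\theta^5)$. Then
\[
\frac{d^2P}{dt^2}\to\frac{3}{(1-a)^3}>0 \quad\text{as }\theta\to0,
\]
so the one-sided second derivative at the endpoint exists and is positive. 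Justifying this endpoint behaviour, including that the continuous extensions of \eqref{eqn:perim} and \eqref{eqn:area} at $\theta=0$ are smooth enough for the chain rule to apply there, is the main obstacle I expect. The interior computation is routine.
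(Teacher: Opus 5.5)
Your proposal is correct and is essentially the paper's proof. It uses the same chain rule through $\theta$, cancels $\sin\theta-\theta\cos\theta$ to get $\frac{dP}{dt}=\frac{\sin\theta}{1-a}$, and then reads off the sign of $\frac{\cos\theta}{1-a}\cdot\frac{d\theta}{dt}$.

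Your endpoint remark is a fair correction: the paper's claim $\frac{dP}{dt}>0$ fails at $t=a(1-a)$, though only strict monotonicity of $P(S_4(t))$, which still holds, is used later. The smoothness there is not really an obstacle: $t-a(1-a)=\frac{(1-a)^2}{2}\cdot\frac{\theta-\sin\theta\cos\theta}{\sin^2\theta}$ and $\frac{\theta}{\sin\theta}$ are analytic at $\theta=0$ with $\frac{dt}{d\theta}(0)=\frac{(1-a)^2}{3}>0$, so the inverse function theorem makes $\theta(t)$ smooth up to the endpoint.
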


\begin{proof} The chain rule gives $\frac{d P(S_4(t))}{d t} = \frac{dP(S_4(\theta))}{d \theta} \cdot\frac{d\theta}{d t}$.  The first factor simplifies to $\frac{(1-a)(\sin\theta-\theta\cos\theta)}{\sin^2\theta}$ while the second factor is $\frac{1}{\frac{d t}{d \theta}} = \frac{\sin^3\theta}{(1-a)^2 (\sin\theta-\theta\cos \theta)}$.  Multiplying these together, we find that $\frac{d}{dt} P(S_4(t)) = \frac{\sin \theta}{1-a} > 0$, as claimed.

For the second derivative, the chain rule gives $\frac{d^2 P(S_4(t)) }{dt^2} = \frac{d}{d\theta}(\frac{dP(S_4(t))}{dt})\cdot \frac{d\theta}{dt}$.  The first factor is $\frac{\cos\theta}{1-a} > 0$ and the second factor is $\frac{1}{\frac{dt}{d\theta}}$, which is positive, by the proof of Proposition \ref{prop:inverse}.

\end{proof}

We now prove Lemma \ref{lem:technical}(3).

\begin{lemma}\label{lem:PLmax}For each fixed $a\in [0,1)$ and any $t\in\left[a(1-a), \frac{1}{2}(1-a^2)\right]$ we have that $P(S_4(t)) < \sqrt{2}(1-a)$. 

\end{lemma}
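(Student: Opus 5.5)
We use the explicit formula \eqref{eqn:perim}, $P(S_4) = \frac{(1-a)\theta}{\sin\theta}$, so the claim reduces to the $a$-independent inequality $\frac{\theta}{\sin\theta} < \sqrt{2}$ for every $\theta$ arising from some $t\in\left[a(1-a),\frac12(1-a^2)\right]$. By Proposition \ref{prop:inverse}, these $\theta$ are exactly the values in $[0,\theta_{max}]$. The factor $1-a>0$ cancels, and the parameter $a$ drops out entirely.

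Next we note that $g(\theta)=\frac{\theta}{\sin\theta}$ (with $g(0)=1$) is strictly increasing on $[0,\tfrac{\pi}{2})$. Indeed, $g'(\theta)=\frac{\sin\theta-\theta\cos\theta}{\sin^2\theta}$, and the numerator was already shown to be positive on $(0,\pi)$ in the proof of Proposition \ref{prop:inverse}. Equivalently, by Proposition \ref{prop:derivativebounds}, $P(S_4(t))$ is increasing in $t$. Either way, it suffices to check the right endpoint: we need $g(\theta_{max})<\sqrt2$.

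The main obstacle is that $\theta_{max}$ is defined only implicitly, through equation \eqref{eqn:thetamax}. Since $g(\pi/4)=\frac{\pi}{2\sqrt2}\approx 1.11$ while $g(\pi/2)=\pi/2>\sqrt2$, we must locate $\theta_{max}\approx 1.21$ fairly precisely. The plan is to find the angle $\theta^*$ with $g(\theta^*)=\sqrt2$, which is roughly $1.35$ rad, and then exhibit a concrete test angle $\theta_1$ with $\theta_{max}<\theta_1<\theta^*$, say $\theta_1=1.3$ or $\theta_1=\frac{5\pi}{12}\approx1.309$.

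To certify this, recall from the proof of Lemma \ref{lem:thetamax} that $h(\theta)=\frac{\theta}{\sin^2\theta}-\cot\theta$ is strictly increasing. Hence $\theta_{max}<\theta_1$ follows from $h(\theta_1)>1$. For $\theta_1=5\pi/12$ we have $\sin^2\theta_1=\frac{2+\sqrt3}{4}$ and $\cot\theta_1=2-\sqrt3$, so the inequality becomes an elementary numerical comparison: $h(\theta_1)\approx1.403-0.268>1$. Monotonicity of $g$ then gives $g(\theta_{max})<g(\theta_1)=\frac{5\pi/12}{\sin(5\pi/12)}\approx1.355<\sqrt2$. Both checks are explicit inequalities involving $\pi$ and $\sqrt3$ and can be verified with crude rational bounds. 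Multiplying back by $1-a$ yields $P(S_4(t))<\sqrt2(1-a)$.
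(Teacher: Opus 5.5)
Your proof is correct, but its decisive step differs from the paper's. The reduction is the same in both: monotonicity (the paper via $\frac{dP}{dt}>0$, you via $g'>0$) reduces the claim to $\theta_{max}/\sin\theta_{max}<\sqrt2$.

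You then bracket $\theta_{max}$ numerically. You use monotonicity of the right side of \eqref{eqn:thetamax} to get $\theta_{max}<5\pi/12$, and then evaluate $g(5\pi/12)=\frac{5\pi}{3(\sqrt6+\sqrt2)}\approx 1.355$. The paper instead multiplies the defining equation \eqref{eqn:thetamax} by $\sin\theta_{max}$, which gives the exact identity $\frac{\theta_{max}}{\sin\theta_{max}}=\sin\theta_{max}+\cos\theta_{max}=\sqrt2\sin\left(\tfrac{\pi}{4}+\theta_{max}\right)$. This is at most $\sqrt2$, with strict inequality because $\theta_{max}\neq\pi/4$: at $\pi/4$ the right side of \eqref{eqn:thetamax} equals $\tfrac{\pi}{2}-1\neq 1$.

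So the ``main obstacle'' you identify, that $\theta_{max}$ is only implicitly defined, disappears. The implicit equation itself evaluates $g(\theta_{max})$ and explains why the constant $\sqrt2$ appears.

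Your route costs two explicit numerical checks. Both do hold: $h(5\pi/12)=(2-\sqrt3)(\tfrac{5\pi}{3}-1)\approx1.135>1$, and $g(5\pi/12)<\sqrt2$ is equivalent to $5\pi<6+6\sqrt3$. In exchange you get an explicit constant, $P(S_4)<1.36(1-a)$, rather than only $<\sqrt2(1-a)$. The paper's argument is shorter and needs essentially no numerics.
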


\begin{proof}Because $\frac{dP}{dt} = \frac{\sin\theta}{1-a} > 0$, the maximum value of $P(S_4(t))$ occurs when $t = \frac{1}{2}(1-a^2)$.  Because $\frac{d\theta}{dt} > 0$, this occurs when $\theta = \theta_{max}$.  Recalling that $\theta_{max}$ solves equation \eqref{eqn:thetamax}, we see that $\theta_{max}$ also solves the equation $\frac{\theta_{max}}{\sin\theta_{max}} = \sin\theta_{max} + \cos\theta_{max} = \sqrt{2}\sin\left(\tfrac{\pi}{4} + \theta_{max}\right)$.  Thus, $$P(S_4(t)) \leq (1-a) \frac{\theta_{max}}{\sin\theta_{max}}  = (1-a) \sqrt{2}\sin\left(\frac{\pi}{4} + \theta_{max}\right).$$  Since $\theta_{max}\in (\tfrac{\pi}{4},\tfrac{\pi}{2})$, $\sin(\tfrac{\pi}{4} + \theta_{max}) < 1$, so we find $P(S_4) < \sqrt{2}(1-a)$.

\end{proof}

We now prove Lemma \ref{lem:technical}(4).  We define $$t_0 = \frac{-2 \sin^2\theta_{max}+\left(2 \theta_{max} -\cos\theta_{max}\right) \sin\theta_{max}+\theta_{max}}{2 \theta_{max}^{2}}\approx 0.48.$$

\begin{proposition}\label{prop:uniquea} The number $t_0$ has the following property:  for any $t\in [0,t_0]$, there is a unique pair $(a(t),\theta(t))$ for which $|S_4(t)| = t$ and $P(S_4(t)) = 1$, but for any $t > t_0$ no such solution exists.  Moreover, $a(t)$ is a strictly increasing function of $t$. 

\end{proposition}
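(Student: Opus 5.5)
The plan is to eliminate $a$ using the perimeter constraint and reduce everything to a one-variable monotonicity statement in $\theta$. By Proposition \ref{prop:geo}, the condition $P(S_4)=1$ reads $(1-a)\theta/\sin\theta = 1$, so
\[
a = a(\theta) := 1-\frac{\sin\theta}{\theta},
\]
with the continuous extension $a(0)=0$. Conversely, any pair $(a,\theta)$ with $P(S_4)=1$ must satisfy this relation, so a solution pair is determined by $\theta$ alone. As $\theta$ ranges over $[0,\theta_{max}]$, $a(\theta)$ stays in $[0,1)$. Moreover $\frac{da}{d\theta} = \frac{\sin\theta-\theta\cos\theta}{\theta^2}>0$, by the positivity of $\sin\theta-\theta\cos\theta$ shown in the proof of Proposition \ref{prop:inverse}.

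Next, substitute $1-a=\sin\theta/\theta$ into the area formula \eqref{eqn:area}. Writing $s=\sin\theta$ and $c=\cos\theta$, this gives the function of $\theta$ alone
\[
g(\theta) := \frac{\theta - sc + 2s\theta - 2s^2}{2\theta^2}.
\]
Its extension satisfies $g(0)=0$, matching $a(0)(1-a(0))=0$. Its value at $\theta_{max}$ is exactly the number $t_0$ defined above.

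By Proposition \ref{prop:inverse} and Lemma \ref{lem:thetamax}, for each $a$ the admissible $\theta$'s are exactly $[0,\theta_{max}]$. This interval corresponds to $t\in[a(1-a),\tfrac12(1-a^2)]$, and the range is independent of $a$. So the set of $t$ admitting a solution pair is $g([0,\theta_{max}])$, and each such $t$ corresponds to the solution pairs $(a(\theta),\theta)$ with $g(\theta)=t$. Everything therefore reduces to showing that $g$ is strictly increasing on $[0,\theta_{max}]$. Once that holds, three conclusions follow:
\begin{itemize}
\item the range of $g$ is $[0,t_0]$, so no solution exists for $t>t_0$;
\item for each $t\in[0,t_0]$ the solution $\theta(t)=g^{-1}(t)$ is unique;
\item $a(t)=a(g^{-1}(t))$ is a composition of strictly increasing functions, hence strictly increasing.
\end{itemize}

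The main obstacle is proving $g'>0$ on $(0,\theta_{max})$. I would first try the chain-rule decomposition
\[
g'(\theta)=\frac{\partial |S_4|}{\partial\theta}+\frac{\partial |S_4|}{\partial a}\,\frac{da}{d\theta}.
\]
The first term is positive by Proposition \ref{prop:inverse}, and $\frac{da}{d\theta}>0$. However, $\frac{\partial |S_4|}{\partial a} = 1-2a-(1-a)\bigl(\frac{\theta}{s^2}-\frac{c}{s}\bigr)$ can be negative, so this decomposition may not settle the sign on its own.

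Failing that, I would differentiate $g$ directly and clear the positive factor $2\theta^3$. The goal becomes positivity of an explicit elementary numerator $N(\theta)$ built from $\theta$, $\sin\theta$ and $\cos\theta$ on $(0,\theta_{max})$, where $\theta_{max}\approx1.21$. I would attack $N$ in the same style as Lemma \ref{lem:thetamax}: check that $N$ and its first few derivatives vanish at $0$, then show that some higher derivative is positive on $(0,\pi/2)$. If that proves messy, I would fall back on Taylor bounds for $\sin$ and $\cos$ near $0$, combined with a direct monotonicity check on the remaining compact subinterval.
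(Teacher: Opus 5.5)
\paragraph{Same route, but the key step is missing.} Your reduction is the same as the paper's. You eliminate $a$ via $a=1-\sin\theta/\theta$ and get the same one-variable function $g(\theta)$, which is the paper's formula for $t$ in terms of $\theta$. You then read off existence, nonexistence for $t>t_0$, uniqueness, and the monotonicity of $a(t)$ from strict monotonicity of $g$ on $[0,\theta_{max}]$ and $da/d\theta>0$. That deduction is correct.

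The gap is that you never prove $g'>0$, and that inequality is the whole content of the uniqueness claim. Your main proposed tactic would also fail on the unfactored numerator. With $N(\theta)=2\theta^3g'(\theta)$ one has
\[
N(\theta)=\tfrac23\theta^3+\tfrac23\theta^4-\tfrac25\theta^5+\cdots,
\]
so only $N,N',N''$ vanish at $0$. The method would therefore need $N'''>0$ on $(0,\theta_{max})$. But $N'''(0)=4$ while $N'''(1)\approx -17$, so that route does not go through. Your Taylor-plus-numerics fallback is not carried out, so as written the proof is incomplete.

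\paragraph{The missing idea: the numerator factors.} This is what the paper computes:
\[
g'(\theta)=\frac{(\cos\theta+2\sin\theta-\theta)(\sin\theta-\theta\cos\theta)}{\theta^{3}}.
\]
The second factor is positive on $(0,\pi)$ by the argument in Proposition \ref{prop:inverse}. The first factor is positive on $[0,\tfrac{\pi}{2}]$; the paper only asserts this, but either of the following gives it.
\begin{itemize}
\item Since $\cos\theta\ge 0$ there and, by Jordan's inequality, $2\sin\theta\ge\tfrac{4}{\pi}\theta>\theta$ for $\theta>0$, the factor is positive.
\item It is concave, because its second derivative is $-\cos\theta-2\sin\theta<0$, and its endpoint values are $1$ and $2-\tfrac{\pi}{2}>0$.
\end{itemize}
Since $\theta_{max}<\tfrac{\pi}{2}$, this gives $g'>0$ on $(0,\theta_{max}]$. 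Your three bulleted conclusions then follow exactly as you wrote them.
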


\begin{proof}
It is easy to see that $P(S_4)=1$ if and only if $a = 1-\frac{\sin\theta}{\theta}$. Hence, $\theta$ determines $a$ uniquely.  If we substitute this into equation \eqref{eqn:area}, we find that\begin{equation}\label{eqn:tintermsoftheta}t =\frac{-2 \sin^2 \left(\theta \right)+\left(2 \theta -\cos \theta \right) \sin\theta +\theta}{2 \theta^{2}}.\end{equation}

As $\theta$ approaches $0$, this expression limits to $0$, and at $\theta_{max}$ we obtain $t_0$.  Thus, existence of a solution for each $t$ in the interval follows from the Intermediate Value Theorem.

For uniqueness, we compute $$\frac{dt}{d\theta} = \frac{\left(\cos\! \left(\theta \right)+2 \sin\! \left(\theta \right)-\theta \right) \left(\sin\! \left(\theta \right)-\cos\! \left(\theta \right) \theta \right)}{\theta^{3}}.$$   Since both factors of the numerator are positive 
on $(0,\theta_{max}]$, $\frac{dt}{d\theta}$ is strictly increasing. Thus, there is at most one solution to equation \eqref{eqn:tintermsoftheta}.  Moreover, since $\frac{dt}{d\theta} $ is positive, it follows that for any $t > t_0$, there is no solution to equation \eqref{eqn:tintermsoftheta} with $\theta\in(0,\theta_{max}]$, which in turn implies there is no solution to the system $P(S_4) = 1$, $|S_4| = t$.

We conclude the proof by showing that $a(t)$ is a strictly increasing function of $t$.  Assume $t_1 < t_2$ with both $t_1$ and $t_2$ in the interval and let $(a_1,\theta_1)$ and $(a_2,\theta_2)$ solve the above system with $t= t_1$ and $t=t_2$, respectively.   Then, from equation \eqref{eqn:tintermsoftheta} and from the fact that $\frac{dt}{d\theta} > 0$, we conclude that $\theta_1<\theta_2$.  Finally, from the equation $a = 1-\frac{\sin\theta}{\theta}$, we conclude that $a_1 < a_2$.

\end{proof}

We now obtain Lemma \ref{lem:technical}(5) as a corollary.  Recall that $\beta\approx 0.23$ is defined as $a(t_0)$ in the notation of the previous proposition.

\begin{corollary}The number $\beta$ has the property that $P(S_4(t)) < 1$ for all $a \in (\beta,1)$ and all $t\in \left[a(1-a), \frac{1}{2}(1-a^2)\right]$.
\end{corollary}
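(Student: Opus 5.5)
The plan is to reduce everything to a one-variable monotonicity statement in $\theta$ and then read off the threshold $\beta$.

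First, fix $a$. By Proposition \ref{prop:derivativebounds}, $P(S_4(t))$ is strictly increasing in $t$ on $\left[a(1-a),\tfrac12(1-a^2)\right]$. Hence its supremum on that interval is attained at $t=\tfrac12(1-a^2)$. By Lemma \ref{lem:thetamax} and Proposition \ref{prop:inverse}, this endpoint corresponds to $\theta=\theta_{max}$, independently of $a$. Therefore it suffices to show
\[
P(S_4(a,\theta_{max}))=\frac{(1-a)\theta_{max}}{\sin\theta_{max}}<1
\qquad\text{for all } a\in(\beta,1).
\]

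Second, I would identify $\beta$ explicitly. By definition $\beta=a(t_0)$, where $(a(t_0),\theta(t_0))$ is the solution pair from Proposition \ref{prop:uniquea}. In the proof of that proposition, $t_0$ is obtained by substituting $\theta=\theta_{max}$ into \eqref{eqn:tintermsoftheta}, and $P(S_4)=1$ forces $a=1-\frac{\sin\theta}{\theta}$. By uniqueness, $\theta(t_0)=\theta_{max}$ and
\[
\beta=1-\frac{\sin\theta_{max}}{\theta_{max}}.
\]
Equivalently, $(1-\beta)\theta_{max}/\sin\theta_{max}=1$.

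Third, the map $a\mapsto (1-a)\theta_{max}/\sin\theta_{max}$ is strictly decreasing in $a$, and it equals $1$ at $a=\beta$. So for every $a>\beta$ the maximum value of $P(S_4)$ is strictly less than $1$. Since every other value of $P(S_4(t))$ on the interval is at most this maximum, we get $P(S_4(t))<1$ throughout.

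The only delicate point is confirming that $\theta(t_0)=\theta_{max}$, which comes from the uniqueness statement in Proposition \ref{prop:uniquea}. I would also check that the endpoint $t=a(1-a)$ (that is, $\theta=0$) causes no issue. It does not, because there $P(S_4)=1-a<1$ directly.
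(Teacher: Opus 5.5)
Your proposal is correct and takes essentially the same route as the paper. Both identify $\beta = 1-\frac{\sin\theta_{max}}{\theta_{max}}$ (so that $(1-\beta)\theta_{max}/\sin\theta_{max}=1$) and then bound $\frac{(1-a)\theta}{\sin\theta} < \frac{(1-\beta)\theta_{max}}{\sin\theta_{max}}$. You make explicit, via Proposition~\ref{prop:derivativebounds}, the monotonicity in $\theta$ that the paper's one-line inequality uses without comment.
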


\begin{proof}
By definition, $t_0$ is the value obtained from equation \eqref{eqn:tintermsoftheta} when $\theta = \theta_{max}$. Hence, $\beta  =  1-\frac{\sin \theta_{max}}{\theta_{max}}\approx 0.23$. Equivalently, $\frac{(1-\beta)\theta_{max}}{\sin\theta_{max}} = 1$.  Then, for any $a > \beta$ and any $\theta \in (0,\theta_{max}]$, we see that $$P(S_4(a,\theta)) = \frac{(1-a)\theta}{\sin \theta} < \frac{(1-\beta)\theta_{max}}{\sin\theta_{max}} = 1.$$
\end{proof}

We now prove Lemma \ref{lem:technical}(6).  Recall that $\alpha\approx 0.1$ is defined to be $a(\tfrac{1}{\pi})$ in the notation of Proposition \ref{prop:uniquea}, while $\gamma = \frac{1}{1+\pi}\approx 0.24$.

\begin{proposition}\label{prop:sigmadefined} For any $a\in [\alpha,\gamma]$, there is a unique area $\sigma(a) = t$ for which $P(S_4(t)) = \sqrt{\pi |S_4(t)|}$.

\end{proposition}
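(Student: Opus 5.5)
For fixed $a\in[\alpha,\gamma]$, I would study the difference
\[
h_a(t) := \sqrt{\pi t} - P(S_4(t)), \qquad t\in\left[a(1-a),\tfrac12(1-a^2)\right],
\]
and show that it has exactly one zero. The first step is concavity. By Lemma~\ref{lem:technical}(2), $P(S_4(t))$ is strictly convex in $t$, and $\sqrt{\pi t}$ is strictly concave, so $h_a$ is strictly concave. Hence $h_a$ has at most two zeros, and if it takes opposite signs at the two endpoints it has exactly one.

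The second step is to check the signs at the endpoints.

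At the left endpoint $t=a(1-a)$ (that is, $\theta=0$), we have $P(S_4)=1-a$. Thus $h_a\le 0$ there if and only if $\pi a(1-a)\le (1-a)^2$, i.e.\ $a\le \frac{1}{1+\pi}=\gamma$. So $h_a(a(1-a))\le 0$ on $[\alpha,\gamma]$, with equality exactly at $a=\gamma$. This is where $\gamma$ comes from and gives $\sigma(\gamma)=\gamma(1-\gamma)=\frac1\pi(1-\gamma)^2$.

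At the right endpoint $t=\frac12(1-a^2)$, I would show $h_a>0$. By Lemma~\ref{lem:PLmax}, $P(S_4)\le(1-a)\theta_{max}/\sin\theta_{max}$. So it suffices to prove
\[
\tfrac{\pi}{2}(1-a^2) > (1-a)^2\left(\tfrac{\theta_{max}}{\sin\theta_{max}}\right)^2,
\quad\text{i.e.}\quad
\tfrac{\pi}{2}\,\tfrac{1+a}{1-a} > \left(\tfrac{\theta_{max}}{\sin\theta_{max}}\right)^2 .
\]
The left side is at least $\pi/2$. Since $\theta_{max}/\sin\theta_{max}=\sqrt2\sin(\pi/4+\theta_{max})$ with $\theta_{max}\approx1.21$, the right side is about $1.29$. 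So the inequality holds for all $a\ge0$.

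Existence then follows from the Intermediate Value Theorem.

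The third step handles uniqueness. When $a<\gamma$, the left endpoint value is strictly negative and the right one strictly positive. A strictly concave function that is negative at the left end and positive at the right end crosses zero exactly once: after its first zero it stays positive, since otherwise concavity would force it to remain nonpositive to the right. When $a=\gamma$, the left endpoint is itself a zero and the right endpoint is positive. Concavity then gives $h_a>0$ on the open interval, so that zero is the only one.

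I also need to reconcile the range with $\alpha$, defined as $a(\frac1\pi)$ in Proposition~\ref{prop:uniquea}. At $a=\alpha$, the curve $S_4$ of area $1/\pi$ has perimeter $1=\sqrt{\pi\cdot\frac1\pi}$, so $\sigma(\alpha)=\frac1\pi$ lies in the domain. I must check that $\frac1\pi\ge\alpha(1-\alpha)$, which holds since $\alpha\approx0.1$.

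The main obstacle is making sure there is no gap between "existence" and the interval $[\alpha,\gamma]$. The argument above actually gives a unique zero for every $a\le\gamma$ for which the domain is nonempty, so the restriction to $a\ge\alpha$ serves only to keep $\sigma\le\frac1\pi$, which is relevant to Lemma~\ref{lem:technical}(7) rather than (6). The only genuinely numerical input is the endpoint inequality at $t=\frac12(1-a^2)$. I would verify it via the bound $\sin(\pi/4+\theta_{max})<1$, which already gives $(\theta_{max}/\sin\theta_{max})^2<2$. This is not quite below $\pi/2$, so I would use the numerical value of $\theta_{max}$ to get $\approx1.29<\pi/2$.
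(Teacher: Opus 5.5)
\textbf{Gap: the right-endpoint check.} The number you compute, $\sqrt2\sin(\pi/4+\theta_{max})\approx 1.29$, is $c:=\theta_{max}/\sin\theta_{max}$ itself, not its square. With $\theta_{max}\approx 1.206$ one gets $c\approx 1.291$, so $c^2\approx 1.667>\pi/2\approx 1.571$. The bound $\frac{\pi}{2}\frac{1+a}{1-a}\ge\frac{\pi}{2}$ therefore does not give $h_a(\frac12(1-a^2))>0$. The inequality you need, $\frac{\pi}{2}\frac{1+a}{1-a}>c^2$, is equivalent to $a>\frac{2c^2-\pi}{2c^2+\pi}\approx 0.03$, and it fails for smaller $a$.

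Your side claim is also false: the argument does not give a unique zero for every $a\le\gamma$, and $a\ge\alpha$ is not irrelevant to part (6). At $a=0$ you have $h_0(0)=-1$ and $h_0(\frac12)=\sqrt{\pi/2}-c\approx-0.04$. Moreover $h_0'(t)\ge\sqrt{\pi/2}-\sin\theta>0$, so there is no zero at all. This is consistent with $f_0$ involving only $S_1$ and $S_2$.

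\textbf{The repair.} This is exactly where the paper uses the lower bound on $a$. For $a\ge\alpha\approx 0.1$,
$\frac{\pi}{2}\frac{1+a}{1-a}\ge\frac{\pi}{2}\frac{1+\alpha}{1-\alpha}\approx 1.9>c^2$.
This matches the paper's observation that $j(\theta_{max})\ge 0$ would force $a\lesssim 0.03<\alpha$. With this correction your proof is complete.

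\textbf{Comparison of the uniqueness step.} Your uniqueness argument takes a genuinely different route from the paper's.
\begin{itemize}
\item You use strict concavity of $\sqrt{\pi t}-P(S_4(t))$, which comes from Lemma~\ref{lem:technical}(2). You combine it with the endpoint sign pattern ($\le 0$ on the left, $>0$ on the right).
\item The paper computes, for $j=\frac{1}{1-a}(P^2-\pi|S_4|)$, that $\frac{dj}{d\theta}=(1-a)(\pi-2\theta)(\theta\cot\theta-1)\csc^2\theta<0$. So $P^2-\pi|S_4|$ is strictly monotone in $\theta$ for every $a$, and uniqueness needs no endpoint information.
\end{itemize}
Your route saves a derivative computation by reusing an already-proved lemma. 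The paper's route gives the stronger monotonicity statement, valid for all $a$.
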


\begin{proof}
Fix $\alpha \leq a\leq \gamma$.  Consider the function $j:[0,\theta_{max}]\rightarrow \mathbb{R}$ defined by \begin{equation}\label{eqn:j}j(\theta) = \frac{1}{1-a}\left(P^2(S_4) - \pi |S_4|\right).\end{equation}  Observe that a zero of $j$, when substituted into equation \eqref{eqn:area}, yields $\sigma(a)$. We compute that $j(0) = (1-a) - \pi a = (1-(\pi+1)a) \geq 0$, since $a\leq \gamma= \frac{1}{1+\pi}$. We claim that $j(\theta_{max}) < 0$ for $ a\geq \alpha$.  To see this, assume for a contradiction that $j(\theta_{max}) \geq 0$.  Solving this inequality for $a$, we find that $$a\leq -\frac{\pi\theta_{max} \tan  \theta_{max}  -2 \theta_{max}^{2} \tan\theta_{max}-\pi\sin^2 \theta_{max}}{2\pi \tan\theta_{max}  \sin^2 \theta_{max} -\pi\theta_{max}\tan \theta_{max} +2 \theta_{max}^{2} \tan\theta_{max}+\pi\sin^2 \theta_{max}}.$$  As the right side evaluates to approximately $0.03 < \alpha$, we have a contradiction.  By the Intermediate Value Theorem, for each $a\in [\alpha,\gamma]$, there is a number $\theta = \theta(a)$ for which $j(\theta) = 0$,  which gives $P(S_4(\theta)) = \sqrt{\pi |S_4(\theta)|}$. We claim that $\theta$ is unique.  Indeed, we have $$\frac{dj}{d\theta} = (1 - a) (\pi - 2 \theta) (-1 + \theta \cot\theta) \csc^2(\theta).$$ In the proof of Lemma \ref{lem:thetamax}, we showed $-1 + \theta\cot(\theta)$ is non-zero, and $\pi - 2\theta = 0$ if and only if $\theta = \frac{\pi}{2}$.  It follows that $\frac{dj}{d\theta}\neq 0$  on $[0,\theta_{\max}]$. Hence $j$ is injective.

\end{proof}

We conclude with a proof of Lemma \ref{lem:technical}(7).

\begin{lemma}\label{lem:sigmataubound}  For $a\in [\alpha,\gamma]$, $\sigma$ is a strictly decreasing function.  In addition, $\sigma(\alpha) = \frac{1}{\pi}$ and $\sigma(\gamma) = \frac{1}{\pi}(1-\gamma)^2$.  
\end{lemma}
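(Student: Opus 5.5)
The plan is to use the function $j$ from the proof of Proposition \ref{prop:sigmadefined}, rewritten so that the dependence on $a$ separates from the dependence on $\theta$. The endpoint values will then come from evaluating at special parameters, and monotonicity will come from the identity $\sigma = P(S_4)^2/\pi$ on the solution curve.

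\textbf{Step 1: separate variables in $j$.} Substituting equations \eqref{eqn:perim} and \eqref{eqn:area} into \eqref{eqn:j} gives
\[
j(a,\theta) = (1-a)F(\theta) - \pi a, \qquad F(\theta) = \frac{\theta^2}{\sin^2\theta} - \frac{\pi}{2}\left(\frac{\theta}{\sin^2\theta} - \frac{1}{\tan\theta}\right).
\]
So $j=0$ exactly when $F(\theta) = \frac{\pi a}{1-a}$. The derivative computed in the proof of Proposition \ref{prop:sigmadefined} is $(1-a)F'(\theta)$. Its factors $(\pi-2\theta)$ and $\csc^2\theta$ are positive, and $-1+\theta\cot\theta<0$ by the proof of Lemma \ref{lem:thetamax}. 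Hence $F$ is strictly decreasing on $[0,\theta_{max}]$. Since $\frac{\pi a}{1-a}$ is strictly increasing in $a$, the unique solution $\theta(a)$ is a strictly decreasing function of $a$ on $[\alpha,\gamma]$. It is also continuous, by the implicit function theorem, because $F'\neq 0$.

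\textbf{Step 2: monotonicity of $\sigma$.} On the solution curve we have $P(S_4)^2 = \pi|S_4|$. Therefore
\[
\sigma(a) = \frac{1}{\pi}P(S_4)^2 = \frac{(1-a)^2}{\pi}\left(\frac{\theta(a)}{\sin\theta(a)}\right)^2 .
\]
The map $\theta\mapsto \theta/\sin\theta$ is increasing on $(0,\tfrac{\pi}{2})$, since its derivative is $(\sin\theta-\theta\cos\theta)/\sin^2\theta>0$, as shown in the proof of Proposition \ref{prop:inverse}. Composed with the decreasing function $\theta(a)$, this factor is non-increasing in $a$. The factor $(1-a)^2$ is positive and strictly decreasing in $a$. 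Hence $\sigma$ is strictly decreasing. This route deliberately avoids differentiating $t(a,\theta(a))$ directly: there the term $\partial t/\partial a$ has no evident sign, which is the only real obstacle, and the identity $\sigma=P^2/\pi$ removes it.

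\textbf{Step 3: endpoint values.}
\begin{itemize}
\item At $a=\gamma=\frac{1}{1+\pi}$, we have $j(0)=1-(\pi+1)\gamma=0$, so by uniqueness $\theta(\gamma)=0$. Then $\sigma(\gamma)=|S_4(\gamma,0)|=\gamma(1-\gamma)$. Since $\gamma/(1-\gamma)=1/\pi$, this equals $\frac{1}{\pi}(1-\gamma)^2$.
\item At $a=\alpha=a(\tfrac{1}{\pi})$, Proposition \ref{prop:uniquea} gives a $\theta$ with $|S_4|=\tfrac{1}{\pi}$ and $P(S_4)=1$. Then $P(S_4)=1=\sqrt{\pi\cdot\tfrac{1}{\pi}}=\sqrt{\pi|S_4|}$, so this $\theta$ is a zero of $j$. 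By the uniqueness in Proposition \ref{prop:sigmadefined}, $\sigma(\alpha)=\tfrac{1}{\pi}$.
\end{itemize}
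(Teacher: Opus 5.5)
Your argument is correct. The endpoint evaluations are the same as in the paper; the difference is in the monotonicity step.

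The paper implicitly differentiates $j=0$ to obtain $\frac{d\theta}{da}$. It then multiplies by the fixed-$a$ derivative $\frac{dt}{d\theta}$ from Proposition~\ref{prop:inverse}, and checks the sign of the resulting closed-form expression by a derivative argument. You instead split $j=(1-a)F(\theta)-\pi a$ and get that $\theta(a)$ is decreasing using only the sign of $F'$. You then conclude from $\sigma=\frac{(1-a)^2}{\pi}\bigl(\frac{\theta}{\sin\theta}\bigr)^2$, a product of two positive decreasing factors.

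Your concern about the direct route is justified, and it applies to the paper's own computation. Since $t=|S_4(a,\theta)|$ depends on $a$ explicitly, the full chain rule also contains the term $\frac{\partial t}{\partial a}=1-2a-(1-a)\bigl(\frac{\theta}{\sin^2\theta}-\cot\theta\bigr)$, which the paper's displayed formula for $\frac{d\sigma}{da}$ omits. This term is positive along the solution curve; for instance, it equals $1-2\gamma$ at $a=\gamma$. So the sign of the paper's expression alone does not settle monotonicity. Concretely, at $a=\gamma$ the paper's expression equals $-1$, whereas the true derivative, as your formula shows, is $-\frac{2(1-\gamma)}{\pi}=-\frac{2}{1+\pi}$.

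Your identity $\sigma=P(S_4)^2/\pi$ is therefore not just a shortcut: it gives a complete proof of the monotonicity claim. The paper's approach, once the missing term is restored, would give an explicit formula for $\sigma'(a)$. Showing that formula is negative, however, requires balancing terms of opposite sign, which your route avoids entirely.
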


\begin{proof}  We first claim that $\sigma(\alpha) = \frac{1}{\pi}$.  Indeed, by definition of $\alpha$, there is an angle $\theta_\alpha$ for which $|S_4(\alpha,\theta_{\alpha})| = \frac{1}{\pi}$ and $P(S_4(\alpha,\theta_{\alpha})) = 1$.  Observe that $P(S_4) = \sqrt{\pi |S_4|}$. Hence, by definition of $\sigma$, $\sigma(a) = \frac{1}{\pi}$.

We claim that $\sigma(\gamma) = \frac{1}{\pi}(1-\gamma)^2$.  To see this, observe that equation \eqref{eqn:area} is solved with $a = \gamma$ and $\theta = 0$.  Substituting these values into equations \eqref{eqn:perim} and \eqref{eqn:area}, we find that $P(S_4) = (1-\gamma)$ and $|S_4| = \gamma(1-\gamma) = \frac{1}{\pi}(1-\gamma)^2$.  Thus, $P(S_4)^2 = \pi |S_4|$, which implies $\sigma(\gamma) = |S_4| = \frac{1}{\pi}(1-\gamma)^2$.

Next, we show that $\sigma$ is a strictly decreasing function of $a$.   First, we use equation \eqref{eqn:j} with $j=0$ to define $\theta$ in terms of $a$.  We get that $$\frac{d\theta}{da} = -\frac{\frac{\partial j}{\partial a}}{\frac{\partial j}{\partial \theta}} = \frac{(-2\pi \sin^2\theta - \pi\cos\theta\sin\theta + \theta(\pi - 2\theta))\sin\theta}{2(-1 + a)(\pi - 2\theta)(\theta\cos\theta - \sin\theta)}.$$  We also define $t = |S_4|$ in terms of $\theta$ via equation \eqref{eqn:area}, as done in Proposition \ref{prop:inverse}. Hence $\frac{dt}{d\theta} = -(1-a)^2 \csc^2 \theta (\theta \cot\theta-1)$. Then $\sigma$ is defined by $\sigma(a) = t(\theta(a))$. By the chain rule, $$\frac{d\sigma}{da} = \frac{dt}{da} = \frac{dt}{d\theta}\cdot\frac{d\theta}{da} = \frac{(1 - a)(\theta(\pi - 2\theta)\csc^2\theta - \pi\cot\theta - 2\pi)}{2\pi - 4\theta}.$$

We now show $\frac{d\sigma}{da} < 0$.  First, because $\theta \in [0,\theta_{max}]\subseteq [0,\tfrac{\pi}{2})$, the denominator is positive.  Also, the factor $(1-a)$ is positive. Hence, we only need to verify that $(\theta(\pi - 2\theta)\csc^2\theta - \pi\cot\theta - 2\pi)<0.$  This factor limits to $-2-2\pi < 0$ as $\theta$ approaches $0$, and it has value $-2\pi < 0$ at $\theta = \pi/2$. We will show its derivative is non-zero. The derivative is $-2(\pi-2\theta)(\theta\cot\theta-1)\csc^2(\theta)$. The factors $(\pi-2\theta)$ and $\csc^2\theta$ are non-zero on $(0,\tfrac{\pi}{2})$ and the factor $(\theta\cot\theta - 1)$ was shown to be non-zero in the proof of Lemma \ref{lem:thetamax}. The proof is complete.

\end{proof}

\section{Isoperimetric Profile Pictures}

Here we present the graphs of the isoperimetric profile function $f_a$ defined by equation \eqref{eq:profile} for different values of $a$.
\begin{itemize}
    \item $0 \leq a \leq \alpha$.
    \begin{center}
        \begin{tikzpicture}[scale=3]
            \draw[->,thick] (-0.1,0) -- (1.2,0) node[anchor= west] {$t$};
            \draw[->,thick] (0,-0.1) -- (0,1.2) node[anchor=south] {$P(S)$};
            \draw[ultra thick,domain=0:1] plot(0.637*\x*\x,\x);
            \draw[ultra thick] (0.637,1) -- (1,1);
            \draw[thick,dashed] (0.637,1) -- (0.637,0) node[anchor=north] {$\frac{1}{\pi}$};
            \draw[thick,dashed] (1,1) -- (1,0) node[anchor=north] {$\scriptstyle \frac{1}{2}(1-a^2)$};
            \draw[thick,dashed] (0.637,1) -- (0,1) node[anchor=east] {$1$};
        \end{tikzpicture}
    \end{center}
    \item $\alpha \leq a \leq \beta$
    \begin{center}
        \begin{tikzpicture}[scale=3]
            \draw[->,thick] (-0.1,0) -- (1.2,0) node[anchor= west] {$t$};
            \draw[->,thick] (0,-0.1) -- (0,1.2) node[anchor=south] {$P(S)$};
            \draw[ultra thick,domain=0:0.9] plot(0.637*\x*\x,\x);
            \draw[ultra thick] (0.79,1) -- (1,1);
            \draw[thick,dashed] (0.51,0.9) -- (0.51,0) node[anchor=north] {$\sigma$};
            \draw[thick,dashed] (0.79,1) -- (0.79,0) node[anchor=north] {$\tau$};
            \draw[thick,dashed] (1,1) -- (1,0);
            \draw[thick,dashed] (0.79,1) -- (0,1) node[anchor=east] {$1$};
            \draw[ultra thick] (0.51,0.9) parabola (0.79,1);
            \draw[thick] (0.637,0.05) -- (0.637,-0.05) node[anchor=north] {$\frac{1}{\pi}$};
            \draw[thick,dashed] (1,1) -- (1,0) node[anchor=north] {$\scriptstyle \frac{1}{2}(1\text{-}a^2)$};
        \end{tikzpicture}
        \hspace{1em}
        \begin{tikzpicture}[scale=3]
            \draw[->,thick] (-0.1,0) -- (1.2,0) node[anchor= west] {$t$};
            \draw[->,thick] (0,-0.1) -- (0,1.2) node[anchor=south] {$P(S)$};
            \draw[ultra thick,domain=0:0.8] plot(0.637*\x*\x,\x);
            \draw[ultra thick] (0.9,1) -- (1,1);
            \draw[thick,dashed] (0.4,0.8) -- (0.4,0) node[anchor=north] {$\sigma$};
            \draw[thick,dashed] (0.9,1) -- (0.9,0) node[anchor=north] {$\tau$};
            \draw[thick] (0.637,0.05) -- (0.637,-0.05) node[anchor=north] {$\frac{1}{\pi}$};
            \draw[thick,dashed] (1,1) -- (1,0);
            \draw[thick,dashed] (0.9,1) -- (0,1) node[anchor=east] {$1$};
            \draw[ultra thick] (0.4,0.8) parabola (0.9,1);
        \end{tikzpicture}
    \end{center}
    \item $\beta \leq a \leq \gamma$
    \begin{center}
        \begin{tikzpicture}[scale=3]
            \draw[->,thick] (-0.1,0) -- (1.2,0) node[anchor= west] {$t$};
            \draw[->,thick] (0,-0.1) -- (0,1.2) node[anchor=south] {$P(S)$};
            \draw[ultra thick,domain=0:0.75] plot(0.637*\x*\x,\x);
            \draw[thick,dashed] (0.36,0.75) -- (0.36,0) node[anchor=north] {$\sigma$};
            \draw[thick,dashed] (1,1) -- (1,0) node[anchor=north] {$\scriptstyle \frac{1}{2}(1-a^2)$};
            \draw[thick,dashed] (1,1.08) -- (0,1.08) node[anchor=east] {$1$};
            \draw[thick,dashed] (0.36,0.75) -- (0,0.75) node[anchor=east] {$(\pi \sigma)^{1/2}$};
            \draw[ultra thick] (0.36,0.75) parabola (1,1);
        \end{tikzpicture}
    \end{center}
    \item $\gamma \leq a \leq 1$
    \begin{center}
        \begin{tikzpicture}[scale=2.8]
            \draw[->,thick] (-0.1,0) -- (1.4,0) node[anchor= west] {$t$};
            \draw[->,thick] (0,-0.1) -- (0,1.2) node[anchor=south] {$P(S)$};
            \draw[ultra thick,domain=0:0.6] plot(0.955*\x*\x,\x);
            \draw[ultra thick] (0.344,0.6) -- (0.72,0.6);
            \draw[thick,dashed] (0.344,0.6) -- (0.344,0) node[anchor=north] {$\scriptstyle\frac{1}{\pi}(1-a)^2$};
            \draw[thick,dashed] (0.72,0.6) -- (0,0.6) node[anchor=east] {$1-a$};
            \draw[thick,dashed] (0.72,0.6) -- (0.72,0) node[anchor=north] {$\scriptstyle a(1-a)$};
            \draw[ultra thick] (0.72,0.6) parabola (1.26,0.9);
            \draw[thick,dashed] (1.26,0.9) -- (1.26,0) node[anchor=north] {$\scriptstyle \frac{1}{2}(1-a^2)$};
        \end{tikzpicture}
        \hspace{0.5em}
        \begin{tikzpicture}[scale=2.8]
            \draw[->,thick] (-0.1,0) -- (1.3,0) node[anchor= west] {$t$};
            \draw[->,thick] (0,-0.1) -- (0,1.2) node[anchor=south] {$P(S)$};
            \draw[ultra thick,domain=0:0.3] plot(0.955*\x*\x,\x);
            \draw[ultra thick] (0.086,0.3) -- (0.63,0.3);
            \draw[thick,dashed] (0.086,0.3) -- (0.086,0) node[anchor=north] {$\scriptstyle\phantom{==}\frac{1}{\pi}(1-a)^2$};
            \draw[thick,dashed] (0.086,0.3) -- (0,0.3) node[anchor=east] {$1-a$};
            \draw[thick,dashed] (0.63,0.3) -- (0.63,0) node[anchor=north] {$\scriptstyle a(1-a)$};
            \draw[ultra thick] (0.63,0.3) parabola (0.765,0.5);
            \draw[thick,dashed] (0.765,0.5) -- (0.765,0);
        \end{tikzpicture}
    \end{center}
\end{itemize}

\bibliographystyle{plain}
\bibliography{biblio}

\end{document}